\definecolor{citegreen}{rgb}{0,0.6,0}
\definecolor{refred}{rgb}{0.8,0,0}
\title{Ricci flow coupled with harmonic map flow\\
Flot de Ricci coupl\'e avec le flot harmonique}%
\author{Reto M\"{u}ller}
\date{}
\providecommand{\abs}[1]{\lvert #1\rvert}
\providecommand{\Abs}[1]{\left\lvert #1\right\rvert}
\providecommand{\scal}[1]{\langle #1\rangle}
\providecommand{\Scal}[1]{\left\langle #1\right\rangle}
\DeclareMathOperator{\grad}{grad}
\DeclareMathOperator{\Hess}{Hess}
\DeclareMathOperator{\vol}{vol}
\DeclareMathOperator{\Sym}{Sym}
\DeclareMathOperator{\id}{id}
\DeclareMathOperator{\tr}{tr}
\newcommand{\RR}{\mathbb{R}}
\newcommand{\NN}{\mathbb{N}}
\newcommand{\eps}{\varepsilon}
\newcommand{\Lap}{\triangle}
\newcommand{\D}{\nabla}
\newcommand{\hN}{\!\phantom{l}^N\!}
\newcommand{\sF}{\mathcal{F}}
\newcommand{\sW}{\mathcal{W}}
\newcommand{\sL}{\mathcal{L}}
\newcommand{\sS}{\mathcal{S}}
\newcommand{\sT}{\mathcal{T}}
\newcommand{\sD}{\mathcal{D}}
\newcommand{\Lexp}{\mathcal{L}_b\mathrm{exp}_{x_k}^{\tau_1}}
\newcommand{\dt}{\tfrac{\partial}{\partial t}}
\newcommand{\dtau}{\tfrac{\partial}{\partial \tau}}
\newcommand{\dl}{\tfrac{\partial}{\partial \lambda}}
\newcommand{\Rm}[1]{
  \def\arg{#1}
  \ifx\arg\empty
    \mathrm{Rm}
  \else
    \mathop{\mathrm{Rm}}(#1)
  \fi
}
\newcommand{\NRm}[1]{
  \def\arg{#1}
  \ifx\arg\empty
    \hN\mathrm{Rm}
  \else
    \mathop{\hN\mathrm{Rm}}(#1)
  \fi
}
\newcommand{\Rc}[1]{
  \def\arg{#1}
  \ifx\arg\empty
    \mathrm{Rc}
  \else
    \mathop{\mathrm{Rc}}(#1)
  \fi
}
\def\Xint#1{\mathchoice
{\XXint\displaystyle\textstyle{#1}}
{\XXint\textstyle\scriptstyle{#1}}
{\XXint\scriptstyle\scriptscriptstyle{#1}}
{\XXint\scriptscriptstyle\scriptscriptstyle{#1}}
\!\int}
\def\XXint#1#2#3{{\setbox0=\hbox{$#1{#2#3}{\int}$}
\vcenter{\hbox{$#2#3$}}\kern-.5\wd0}}
\def\dashint{\Xint-}
\newtheoremstyle{break}
  {12pt}
  {16pt}
  {\itshape}
  {}
  {\bfseries}
  {}
  {\newline}
  {\thmname{#1}\thmnumber{ #2}\thmnote{ \normalfont{(#3)}}}
\theoremstyle{definition}%
\theoremstyle{remark}%
\newtheorem*{rem}{Remark}
\theoremstyle{break}%
\newtheorem{lemma}{Lemma}[section]
\newtheorem{prop}[lemma]{Proposition}
\newtheorem{thm}[lemma]{Theorem}
\newtheorem{cor}[lemma]{Corollary}
\newtheorem{defn}[lemma]{Definition}
\numberwithin{equation}{section}%
\begin{document}%
\maketitle%
\pagenumbering{arabic}%
\begin{abstract}
We investigate a coupled system of the Ricci flow on a closed
manifold $M$ with the harmonic map flow of a map $\phi$ from $M$ to
some closed target manifold $N$,
\begin{equation*}
\dt g = -2\Rc{} + 2\alpha \D\phi \otimes \D\phi,\qquad \dt \phi =
\tau_g \phi,
\end{equation*}
where $\alpha$ is a (possibly time-dependent) positive coupling
constant. Surprisingly, the coupled system may be less singular than
the Ricci flow or the harmonic map flow alone. In particular, we can
always rule out energy concentration of $\phi$ a-priori by choosing
$\alpha$ large enough. Moreover, it suffices to bound the curvature
of $(M,g(t))$ to also obtain control of $\phi$ and all its
derivatives if $\alpha \geq \underaccent{\bar}\alpha>0$. Besides
these new phenomena, the flow shares many good properties with the
Ricci flow. In particular, we can derive the monotonicity of an
\emph{energy}, an \emph{entropy} and a \emph{reduced volume}
functional. We then apply these monotonicity results to rule out
non-trivial breathers and geometric collapsing at finite times.\\

\begin{center}\textbf{R\'esum\'e}\end{center}
Nous \'etudions un syst\`eme d'\'equations consistant en un couplage 
entre le flot de Ricci et le flot harmonique d'une fonction $\phi$ 
allant de $M$ dans une vari\'et\'e cible $N$,
\begin{equation*}
\dt g = -2\Rc{} + 2\alpha \D\phi \otimes \D\phi,\qquad \dt \phi =
\tau_g \phi,
\end{equation*}
o\`u $\alpha$ est une constante de couplage strictement positive (et 
pouvant d\'ependre du temps). De mani\`ere surprenante, ce syst\`eme 
coupl\'e peut \^etre moins singulier que le flot de Ricci ou le flot 
harmonique si ceux-ci sont consid\'er\'es de mani\`ere isol\'ee. En 
particulier, on 
peut toujours montrer que la fonction $\phi$ ne se concentre pas le long 
de ce syst\`eme \`a condition de prendre $\alpha$ assez grand. De plus, 
il est suffisant de borner la courbure de $(M,g(t))$ le long du flot 
pour obtenir le contr\^ole de $\phi$ et de toutes ses d\'eriv\'ees si  
$\alpha \geq \underaccent{\bar}\alpha>0$. A part ces ph\'enom\`enes 
nouveaux, ce flot poss\`ede certaines propri\'et\'es analogues \`a 
celles du flot de Ricci. En particulier, il est possible de montrer la 
monotonie d'une \emph{\'energie}, d'une \emph{entropie} et d'une 
fonctionnelle \emph{volume r\'eduit}. On utilise la monotonie de ces 
quantit\'es pour montrer l'absence de solutions en "accord\'eon" et 
l'absence  d'effondrement en temps fini le long du flot.
\end{abstract}
\newpage

\section{Introduction and main results}
Let $(M^m\!,g)$ and $(N^n\!,\gamma)$ be smooth Riemannian manifolds
without boundary. According to Nash's embedding theorem
\cite{Nash:embedding} we can assume that $N$ is isometrically
embedded into Euclidean space $(N^n,\gamma) \hookrightarrow
\RR^d$ for a sufficiently large $d$. If $e_N:N \to \RR^d$ denotes
this embedding, we identify maps $\phi:M\to N$ with $e_N \circ
\phi:M\to\RR^d$, such maps may thus be written as $\phi
=(\phi^\lambda)_{1\leq\lambda\leq d}$. Harmonic maps $\phi:M\to N$
are critical points of the energy functional
\begin{equation}\label{0.eq1}
E(\phi) = \int_M \abs{\D\phi}^2 dV.
\end{equation}
Here, $\abs{\D\phi}^2 := 2e(\phi) =
g^{ij}\D_i\phi^\lambda\D_j\phi^\lambda$ denotes the local energy
density, where we use the convention that repeated Latin indices are
summed over from $1$ to $m$ and repeated Greek indices are summed
over from $1$ to $d$. We often drop the summation indices for $\phi$
when clear from the context. Harmonic maps generalize the concept of
harmonic functions and in particular include closed geodesics and minimal
surfaces.%
\newline

To study the existence of a harmonic map $\phi$ homotopic to a given
map $\phi_0:M\to N$, Eells and Sampson \cite{EellsSampson} proposed
to study the $L^2$-gradient flow of the energy functional
(\ref{0.eq1}),
\begin{equation}\label{0.eq2}
\dt \phi = \tau_g\phi,\qquad \phi(0)=\phi_0,
\end{equation}
where $\tau_g\phi$ denotes the intrinsic Laplacian of $\phi$, often
called the tension field of $\phi$. They proved that if $N$ has
non-positive sectional curvature there always exists a unique,
global, smooth solution of (\ref{0.eq2}) which converges smoothly to
a harmonic map $\phi_\infty:M\to N$ homotopic to $\phi_0$ as
$t\to\infty$ suitably. On the other hand, without an assumption on
the curvature of $N$, the solution might blow up in finite or
infinite time. Comprehensive surveys about harmonic maps and the harmonic map
flow are given in Eells-Lemaire \cite{EellsLemaireI,EellsLemaireII},
Jost \cite{Jost:Riemannian} and Struwe \cite{Struwe:EvProblems}. The
harmonic map flow was the first appearance of a nonlinear heat flow
in Riemannian geometry. Today, geometric heat flows have become an
intensely studied topic in
geometric analysis.%
\newline

Another fundamental problem in differential geometry is to find
canonical metrics on Riemannian manifolds, for example metrics with 
constant curvature in some sense. Using the idea of evolving an object to 
such an ideal state by a nonlinear heat flow, Richard Hamilton
\cite{Hamilton:3folds} introduced the Ricci flow in 1982. His
idea was to smooth out irregularities of the curvature by evolving a
given Riemannian metric $g$ on a manifold $M$ with respect to the
nonlinear weakly parabolic equation
\begin{equation}\label{0.eq3}
\dt g = -2 \Rc{},\qquad g(0)=g_0,
\end{equation}
where $\Rc{}$ denotes the Ricci curvature of $(M,g)$.
Strictly speaking, the Ricci flow is not
the gradient flow of a functional $\sF(g)=\int_M F(\partial^2
g,\partial g,g)dV$, but in 2002, Perelman \cite{Perelman:entropy}
showed that it is gradient-like nevertheless. He presented a
new functional which may be regarded as an improved version of the
Einstein-Hilbert functional $E(g)=\int_M R\,dV$, namely
\begin{equation}\label{0.eq4}
\sF(g,f) := \int_M \Big(R + \abs{\D f}^2 \Big)e^{-f} dV.
\end{equation}
The Ricci flow can be interpreted as the gradient flow of $\sF$ modulo
a pull-back by a family of diffeomorphisms. Hamilton's Ricci flow has a 
successful history. Most importanty, Perelman's
work \cite{Perelman:entropy,Perelman:surgery} led to a completion of
Hamilton's program \cite{Hamilton:survey} and a complete proof 
of Thurston's geometrization conjecture \cite{Thurston} and (using 
a finite extinction result from Perelman \cite{Perelman:extinction} 
or Colding and Minicozzi \cite{ColdingMinicozziI,ColdingMinicozziII}) 
of the Poincar\'{e} conjecture \cite{Poincare}. Introductory surveys on 
the Ricci flow and Perelman's functionals
can be found in the books by Chow and Knopf \cite{RF:intro}, Chow,
Lu and Ni \cite{ChowLuNi}, M\"{u}ller \cite{Muller:Harnack} and Topping
\cite{Topping:Lectures}. More advanced explanations of Perelman's
proof of the two conjectures are given in Cao and Zhu \cite{CaoZhu}
Chow et al. \cite{RF:TAI,RF:TAII}, Kleiner and Lott
\cite{KleinerLott} and Morgan and Tian
\cite{MorganTian,MT:completion}. A good survey on Perelman's work is
also given in Tao \cite{Tao:Perelman}.%
\newline

The goal of this article is to study a coupled system of the two
flows (\ref{0.eq2}) and (\ref{0.eq3}). Again, we let $(M^m\!,g)$ and
$(N^n\!,\gamma)$ be smooth manifolds without boundary and with
$(N^n\!,\gamma) \hookrightarrow \RR^d$. Throughout this article, we
will assume in addition that $M$ and $N$ are compact, hence closed.
However, many of our results hold for more general manifolds.%
\newline

Let $g(t)$ be a family of Riemannian metrics on $M$ and $\phi(t)$ a
family of smooth maps from $M$ to $N$. We call $(g(t),\phi(t))_{t
\in [0,T)}$ a solution to the coupled system of Ricci flow and
harmonic map heat flow with coupling constant $\alpha(t)$, the
$(RH)_\alpha$ flow for short, if it satisfies
\begin{equation}
\left\{\begin{aligned}\dt g &= -2\Rc{} + 2\alpha \D\phi \otimes \D\phi,\\%
\dt \phi &= \tau_g \phi. \end{aligned}\right. \tag*{$(RH)_\alpha$}
\end{equation}
Here, $\tau_g \phi$ denotes the tension field of the map $\phi$ with
respect to the evolving metric $g$, and $\alpha(t)\geq 0$ denotes a
(time-dependent) coupling constant. Finally, $\D\phi\otimes\D\phi$
has the components $(\D\phi\otimes\D\phi)_{ij}
=\D_i\phi^\lambda\D_j\phi^\lambda$. In particular, $\abs{\D\phi}^2$
as defined above is the trace of $\D\phi\otimes\D\phi$ with respect
to $g$.%
\newline

The special case where $N\subseteq\RR$ and $\alpha\equiv 2$ was studied by List \cite{List:diss}, his motivation coming from general relativity and the study of Einstein vacuum equations. Moreover, List's flow also arises as the Ricci flow of a warped product, see \cite[Lemma A.3]{Muller:diss}. After completion of this work, we learned that another special case of $(RH)_\alpha$ with $N\subseteq SL(k\RR)/SO(k)$ arises in the study of the long-time behaviour of certain Type III Ricci flows, see Lott \cite{Lott:TypeIII} and a recent paper of Williams \cite{Williams} for details and explicit examples.%
\newline

The paper is organized as follows. In order to get a feeling for the
flow, we first study explicit examples of solutions of $(RH)_\alpha$ 
as well as soliton solutions which are generalized fixed points modulo
diffeomorphisms and scaling. The stationary solutions of
$(RH)_\alpha$ satisfy $\Rc{} = \alpha \D \phi \otimes \D \phi$,
where $\phi$ is a harmonic map. To prevent
$(M,g(t))$ from shrinking to a point or blowing up, it is convenient
to introduce a volume-preserving version of the flow.%
\newline

In Section 3, we prove that for constant coupling functions 
$\alpha(t) \equiv \alpha > 0$ the $(RH)_\alpha$ flow can be interpreted 
as a gradient flow for an energy functional $\sF_\alpha(g,\phi,f)$ 
modified by a family of diffeomorphisms generated by $\D f$. If 
$(g(t),\phi(t))$ solves $(RH)_\alpha$ and $e^{-f}$ is a
solution to the adjoint heat equation under the flow, then
$\sF_\alpha$ is non-decreasing and constant if and only if
$(g(t),\phi(t))$ is a steady gradient soliton. In the more general
case where $\alpha(t)$ is a positive function, the monotonicity
result still holds whenever $\alpha(t)$ is non-increasing. This section
is based on techniques of Perelman \cite[Section 1]{Perelman:entropy} 
for the Ricci flow.%
\newline

In the fourth section, we prove short-time existence for the flow
using again a method from Ricci flow theory known as DeTurck's trick
(cf.~\cite{DeTurck:trick}), i.e.~we transform the weakly parabolic
system $(RH)_\alpha$ into a strictly parabolic one by pushing it
forward with a family of diffeomorphisms. Moreover, we
compute the evolution equations for the Ricci and scalar curvature,
the gradient of $\phi$ and combinations thereof. In particular, the
evolution equations for the symmetric tensor $S_{ij}:= R_{ij}-\alpha
\D_i\phi\D_j\phi$ and its trace $S=R-\alpha\abs{\D\phi}^2$ will be
very useful.%
\newline

In Section 5, we study first consequences of the evolution equations
for the existence or non-existence of certain types of
singularities. Using the maximum principle, we show that $\min_{x\in
M}S(x,t)$ is non-decreasing along the flow. This has the rather
surprising consequence that if $\abs{\D\phi}^2(x_k,t_k)\to\infty$
for $t_k\nearrow T$, then $R(x_k,t_k)$ blows up as well,
i.e.~$g(t_k)$ must become singular as $t_k\nearrow T$. Conversely,
if $\abs{\Rm{}}$ stays bounded along the flow, $\abs{\D\phi}^2$ must
stay bounded, too. This leads to the conjecture that a uniform
Riemann-bound is enough to conclude long-time existence. This
conjecture is proved in Section 6. To this end, we first compute
estimates for the Riemannian curvature tensor, its derivatives and
the higher derivatives of $\phi$ and then follow Bando's
\cite{Bando:estimates} and Shi's \cite{Shi:complete} results for the
Ricci flow to derive interior-in-time gradient estimates.%
\newline

In Section 7, we introduce an entropy functional $\sW_\alpha(g,\phi,f,\tau)$
which corresponds to Perelman's shrinker entropy for the Ricci flow
\cite[Section 3]{Perelman:entropy}. Here $\tau=T-t$ denotes a
backwards time. For $\alpha(t)\equiv\alpha>0$, the entropy functional 
is non-decreasing and constant exactly on shrinking solitons. Again, the entropy is
monotone if we allow non-increasing positive coupling functions
$\alpha(t)$ instead of constant ones. Using $\sF_\alpha$ and
$\sW_\alpha$ we can exclude nontrivial breathers, i.e.~we show that
a breather has to be a gradient soliton. In the case of a steady or
expanding breather the result is even stronger, namely we can show
that $\phi(t)$ has to be harmonic in these cases for all $t$.
\newline

Finally in the last section, we state the monotonicity of a backwards 
reduced volume quantity for the $(RH)_\alpha$ flow
with positive non-increasing $\alpha(t)$. This follows from our more
general result from \cite{Muller:MonotoneVolumes}. We apply this
monotonicity to deduce a local non-collapsing theorem.%
\newline

In the appendix, we collect the commutator identities on bundles like 
$T^*M\otimes \phi^*TN$, which we need for the evolution equations in 
Section 4 and 6.%
\newline

This article originates from the authors PhD thesis \cite{Muller:diss} 
from 2009, where some of the proofs and computations are carried out in more details.
The author likes to thank Klaus Ecker, Robert Haslhofer, Gerhard
Huisken, Tom Ilmanen, Peter Topping and in particular Michael Struwe
for stimulating discussions and valuable remarks and suggestions
while studying this new flow. Moreover, he thanks the Swiss National
Science Foundation that partially supported his research and Zindine Djadli
who translated the abstract into flawless French.%

\section{Examples and special solutions}%
In this section, we only consider time-independent coupling
constants $\alpha(t)\equiv \alpha$. First, we study two very simple
homogeneous examples for the $(RH)_\alpha$ flow system to illustrate
the different behavior of the flow for different coupling constants
$\alpha$. In particular, the existence or non-existence of
singularities will depend on the choice of $\alpha$. We study the
volume-preserving version of the flow as well. We say that
$(g(t),\phi(t))$ is a solution of the normalized $(RH)_\alpha$ flow,
if it satisfies
\begin{equation}\label{4.eq1}
\left\{\begin{aligned}\dt g &= -2\Rc{} + 2\alpha \D\phi \otimes
\D\phi + \tfrac{2}{m}\,g\;\dashint_M \big(R-\alpha\abs{\D\phi}^2\big)dV,\\%
\dt \phi &= \tau_g \phi. \end{aligned}\right.
\end{equation}

\subsection{Two homogeneous examples with $\phi=\id$}
Assume that $(M,g(0))$ is a round two-sphere of constant Gauss
curvature $1$. Under the Ricci flow, the sphere shrinks to a point
in finite time. Let us now consider the $(RH)_\alpha$ flow, assuming
that $(N,\gamma)=(M,g(0))$ and $\phi(0)$ is the identity map. With the ansatz $g(t)=c(t)g(0)$, $c(0)=1$ and the fact that $\phi(t)=\phi(0)$ is
harmonic for all $g(t)$, the $(RH)_\alpha$ flow reduces to
\begin{equation*}
\dt c(t) = -2+2\alpha.
\end{equation*}
For $\alpha < 1$, $c(t)$ goes to zero in finite time, i.e.~$(M,g(t))$ 
shrinks to a point, while the scalar curvature $R$
and the energy density $\abs{\D\phi}^2$ both go to infinity. For
$\alpha=1$, the solution is stationary. For $\alpha>1$, $c(t)$ grows
linearly and the flow exists forever, while both the scalar
curvature $R$ and the energy density $\abs{\D\phi}^2$ vanish
asymptotically. Instead of changing $\alpha$, we can also scale the
metric $\gamma$ on the target manifold. Mapping into a larger sphere
has the same consequences as choosing a larger $\alpha$. 
Note that the volume-preserving version (\ref{4.eq1}) 
of the flow is always stationary, as it is for the normalized Ricci flow, too.
\newline

A more interesting example is obtained if we let
$(M^4,g(t))=(\mathbb{S}^2\times L,c(t)g_{\mathbb{S}^2}\oplus
d(t)g_L)$, where $(\mathbb{S}^2,g_{\mathbb{S}^2})$ is again a round
sphere with Gauss curvature $1$ and $(L,g_L)$ is a surface (of
genus $\geq 2$) with constant Gauss curvature $-1$. Under the Ricci
flow, $\dt c(t)=-2$ and $\dt d(t)=+2$. In particular, $c(t)$ goes to
zero in finite time while $d(t)$ always expands. Under the
normalized Ricci flow $\dt g=-2\Rc{}+\tfrac{1}{2}\,g\;\dashint R\;
dV$, we have
\begin{equation*}
\dt c = -2 + \tfrac{d-c}{d}=-1-c^2,\qquad \dt
d=+2+\tfrac{d-c}{c}=+1+d^2.
\end{equation*}
Again, $c(t)$ goes to zero in finite time. At the same time, $d(t)$
goes to infinity. Now, let us consider the $(RH)_\alpha$ flow for this example,
setting $(N,\gamma)=(M,g(0))$ and $\phi(0)=\id$. First,
note that $\phi(0)$ is always harmonic and thus $\phi(t)=\phi(0)$ is
unchanged. The identity map between the same manifold with
two different metrics is not necessarily harmonic in general, but here
it is. The flow equations reduce to $\dt c(t)=-2+2\alpha$
and $\dt d(t)=+2+2\alpha$. While $d(t)$ always grows, the behavior
of $c(t)$ is exactly the same as in the first example above, where
we only had a two-sphere. On the other hand, if we consider the
normalized flow (\ref{4.eq1}), we obtain 
\begin{equation*}
\dt c = (\alpha-1) -(\alpha+1)c^2,\qquad \dt d=(\alpha+1) -
(\alpha-1)d^2.
\end{equation*}
In the case where $\alpha <1$, $c(t)$ goes to zero in finite time,
while $d(t)$ blows up at the same time, similar to the normalized
Ricci flow above. For $\alpha=1$, $c(t)=(1+2t)^{-1}$ goes to zero in
infinite time while $d(t)=(1+2t)$ grows linearly, i.e.~we have
long-time existence but no natural convergence (to a manifold with
the same topology). In the third case, where $\alpha
>1$, both $c(t)$ and $d(t)$ converge with
\begin{equation*}
c(t) \to \sqrt{\tfrac{\alpha-1}{\alpha+1}}, \qquad d(t) \to
\sqrt{\tfrac{\alpha+1}{\alpha-1}}, \qquad \textrm{as }t\to\infty.
\end{equation*}
These examples show that both the unnormalized and the normalized 
version of our flow can behave very differently from the Ricci flow if $\alpha$
is chosen large. In particular, they may be more regular in special situations.

\subsection{Volume-preserving version of $(RH)_\alpha$}
Here, we show that the unnormalized $(RH)_\alpha$ flow and the
normalized version are related by rescaling the metric $g$ and the
time, while keeping the map $\phi$ unchanged. Indeed, assume that
$(g(t),\phi(t))_{t\in[0,T)}$ is a solution of $(RH)_\alpha$. Define
a family of rescaling factors $\lambda(t)$ by
\begin{equation}\label{4.eq9}
\lambda(t) := \bigg(\int_M dV_{g(t)}\bigg)^{\!-2/m}, \quad t\in
[0,T),
\end{equation}
and let $\bar{g}(t)$ be the family of rescaled metrics
$\bar{g}(t)=\lambda(t)g(t)$ having constant unit volume
\begin{equation*}
\int_M dV_{\bar{g}(t)} = \int_M \lambda^{m/2}(t)\; dV_{g(t)} =1,
\quad \forall t\in [0,T).
\end{equation*}
Write $\sS=\Rc{}-\alpha\D\phi\otimes\D\phi$ with trace
$S=R-\alpha\abs{\D\phi}^2$. It is an immediate consequence of the
scaling behavior of $\Rc{}$, $R$, $\D\phi\otimes\D\phi$ and
$\abs{\D\phi}^2$ that $\bar{\sS}=\sS$ and $\bar{S}=\lambda^{-1}S$
for $\bar{g}=\lambda g$. Note that $\lambda(t)$ is a smooth function
of time, with
\begin{equation*}
\frac{d}{dt} \lambda(t) = -\tfrac{2}{m}\bigg(\int_M
dV_g\bigg)^{\!-\frac{2+m}{m}}\!\int_M (-S) dV_g=
\tfrac{2}{m}\,\lambda\; \dashint_M S\; dV_g =
\tfrac{2}{m}\,\lambda^2\; \dashint_M \bar{S}\; dV_{\bar{g}}.
\end{equation*}
Now, we rescale the time. Put $\bar{t}(t):=\int_0^t\lambda(s) ds$,
so that $\tfrac{d\bar{t}}{dt}=\lambda(t)$. Then, we obtain
\begin{align*}
\tfrac{\partial}{\partial \bar{t}}\bar{g} &= \lambda^{-1}\dt(\lambda
g ) = \dt g + \big(\lambda^{-2}\dt\lambda\big)\bar{g} = -2\bar{\sS}
+\tfrac{2}{m}\,\bar{g}\;\dashint_M \bar{S}\; dV_{\bar{g}},\\%
\tfrac{\partial}{\partial \bar{t}}\phi &= \lambda^{-1}\dt \phi =
\lambda^{-1}\tau_g\phi = \tau_{\bar{g}}\phi.
\end{align*}
This means that $(\bar{g}(\bar{t}),\phi(\bar{t}))$ solves the
volume-preserving $(RH)_\alpha$ flow (\ref{4.eq1}) on $[0,\bar{T})$,
where $\bar{T}=\int_0^T \lambda(s) ds$.

\subsection{Gradient solitons}
A solution to $(RH)_\alpha$ which changes under a one-parameter
family of diffeomorphisms on $M$ and scaling is called a soliton (or
a self-similar solution). These solutions correspond to fixed points
modulo diffeomorphisms and scaling. The more general class of
periodic solutions modulo diffeomorphisms and scaling, the so-called
breathers, will be defined (but also ruled out) in Section 7.
\begin{defn}\label{4.def1}
A solution $(g(t),\phi(t))_{t\in[0,T)}$ of\/ $(RH)_\alpha$ is called
a \emph{soliton} if there exists a one-parameter family of
diffeomorphisms $\psi_t:M\to M$ with $\psi_0=\id_M$ and a scaling
function $c:[0,T)\to\RR_{+}$ such that
\begin{equation}\label{4.eq2}
\left\{\begin{aligned}g(t)&=c(t)\psi_t^*g(0),\\%
\phi(t)&=\psi_t^*\phi(0).\end{aligned}\right.
\end{equation}
The cases $\dt c=\dot{c}<0$, $\dot{c}=0$ and $\dot{c}>0$ correspond
to \emph{shrinking}, \emph{steady} and \emph{expanding} solitons,
respectively. If the diffeomorphisms $\psi_t$ are generated by a
vector field $X(t)$ that is the gradient
of some function $f(t)$ on $M$, then the soliton is called
\emph{gradient} soliton and $f$ is called the \emph{potential} of
the soliton.
\end{defn}
\begin{lemma}\label{4.lemma2}
Let $(g(t),\phi(t))_{t\in[0,T)}$ be a gradient soliton with
potential $f$. Then for any $t_0\in[0,T)$, this soliton satisfies
the coupled elliptic system
\begin{equation}\label{4.eq3}
\left\{\begin{aligned}0 &=\Rc{}-\alpha\D\phi\otimes\D\phi+\Hess(f)+\sigma g,\\%
0 &= \tau_g\phi-\scal{\D\phi,\D f},\end{aligned}\right.
\end{equation}
for some constant $\sigma(t_0)$. Conversely, given a function $f$ on 
$M$ and a solution of (\ref{4.eq3}) at $t=0$, there exist one-parameter families 
of constants $c(t)$ and diffeomorphisms $\psi_t:M\to M$ such that defining 
$(g(t),\phi(t))$ as in (\ref{4.eq2}) yields a solution of\/ $(RH)_\alpha$. 
Moreover, $c(t)$ can be chosen linear in $t$.
\end{lemma}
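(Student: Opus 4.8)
\emph{Proof proposal.} Write $g_0:=g(0)$, $\phi_0:=\phi(0)$. Both implications rest on the scaling and diffeomorphism behaviour of the three ingredients of $(RH)_\alpha$, which I would record first: the Ricci tensor is scale invariant and natural, $\psi^*\Rc{g}=\Rc{\psi^*g}=\Rc{c\,\psi^*g}$; the $(0,2)$-tensor $\D\phi\otimes\D\phi$ is \emph{independent of $g$} (its components are $\partial_i\phi^\lambda\partial_j\phi^\lambda$) and $\psi^*(\D\phi\otimes\D\phi)=\D(\phi\circ\psi)\otimes\D(\phi\circ\psi)$; and the tension field obeys $\tau_{c\,\psi^*g}(\phi\circ\psi)=c^{-1}\psi^*(\tau_g\phi)$. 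I would also use $\mathcal{L}_{\D u}\,g=2\Hess(u)$ together with the scale invariance of $\Hess$ in the metric. I read the gradient condition in Definition~\ref{4.def1} with respect to the fixed metric $g_0$ (equivalently, $X(t)=c(t)^{-1}\D_{g_0}(\text{const}\cdot f)$ up to reparametrisation); this is the reading under which the assertion holds for every $t_0$ and under which the converse construction closes up.

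\emph{Forward direction.} I would differentiate (\ref{4.eq2}) in $t$. At $t=0$, where $c(0)=1$, $\psi_0=\id$ and $X(0)=\D f(0)$, this gives $\dt\phi\big|_{t=0}=d\phi_0(X(0))=\scal{\D\phi_0,\D f(0)}$ and $\dt g\big|_{t=0}=\dot c(0)\,g_0+\mathcal{L}_{\D f(0)}g_0=\dot c(0)\,g_0+2\Hess(f(0))$; since $(RH)_\alpha$ prescribes the left-hand sides to be $\tau_{g_0}\phi_0$ and $-2\Rc{g_0}+2\alpha\,\D\phi_0\otimes\D\phi_0$, this is exactly (\ref{4.eq3}) at $t_0=0$ with the \emph{constant} $\sigma(0)=\tfrac{1}{2}\dot c(0)$. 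For a general $t_0$ I would run the same computation on $g(t_0)=c(t_0)\psi_{t_0}^*g_0$: one gets $\dt g\big|_{t_0}=\tfrac{\dot c(t_0)}{c(t_0)}g(t_0)+\mathcal{L}_V g(t_0)$ with $V=\psi_{t_0}^*X(t_0)$, and $V$ is again a gradient field for $g(t_0)$ (the constant relating $\psi_{t_0}^*g_0$ to $g(t_0)$ being absorbed into its potential $f$), so $\mathcal{L}_V g(t_0)=2\Hess_{g(t_0)}(f)$ and (\ref{4.eq3}) follows with $\sigma(t_0)=\tfrac{\dot c(t_0)}{2c(t_0)}$; the map equation at $t_0$ drops out of $\dt\phi\big|_{t_0}$ in the same way. (Equivalently, one may reduce to $t_0=0$ by observing that a time-translate of a gradient soliton is again one.)

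\emph{Converse.} Given (\ref{4.eq3}) at $t=0$ for $(g_0,\phi_0,f)$ with a constant $\sigma$, I would put $c(t):=1+2\sigma t$ on the maximal interval $[0,T)$ on which $c>0$, take $X:=\D_{g_0}f$ to be a \emph{fixed} vector field, let $\psi_t$ be the flow of the time-dependent field $c(t)^{-1}X$ with $\psi_0=\id$ (which exists on all of $[0,T)$ since $M$ is closed), and define $g(t):=c(t)\psi_t^*g_0$ and $\phi(t):=\phi_0\circ\psi_t$ as in (\ref{4.eq2}). For the $g$-equation I would compute, using $\mathcal{L}_{c^{-1}X}g_0=2c^{-1}\Hess_{g_0}(f)$ and scale invariance of $\Hess$,
\begin{equation*}
\dt g=\dot c\,\psi_t^*g_0+c\,\psi_t^*\!\big(\mathcal{L}_{c^{-1}X}g_0\big)=\tfrac{\dot c}{c}\,g+2\Hess_{g}(f\circ\psi_t),
\end{equation*}
and compare with
\begin{equation*}
-2\Rc{g}+2\alpha\,\D\phi\otimes\D\phi=\psi_t^*\!\big(2\Hess_{g_0}(f)+2\sigma g_0\big)=2\Hess_{g}(f\circ\psi_t)+\tfrac{2\sigma}{c}\,g,
\end{equation*}
where the second identity uses the naturality facts together with (\ref{4.eq3}); the two expressions coincide precisely because $\dot c=2\sigma$, i.e.\ because $c$ was chosen linear. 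For the map equation I would use the second line of (\ref{4.eq3}) in the form $d\phi_0(X)=\scal{\D\phi_0,\D f}=\tau_{g_0}\phi_0$ and then
\begin{equation*}
\dt\phi=d\phi_0\big(c^{-1}X\circ\psi_t\big)=c^{-1}\psi_t^*\!\big(\tau_{g_0}\phi_0\big)=\tau_{c\,\psi_t^*g_0}(\phi_0\circ\psi_t)=\tau_{g}\phi.
\end{equation*}
Hence $(g(t),\phi(t))$ solves $(RH)_\alpha$, and $c(t)=1+2\sigma t$ is linear in $t$.

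\emph{Main obstacle.} I do not expect a conceptual obstacle; the delicate part is bookkeeping — fixing one consistent convention for the generator of $\psi_t$ (the rescaled fixed $g_0$-gradient of $f$, which is exactly what makes the converse close up), keeping the scaling weights of $\Rc{}$, $\D\phi\otimes\D\phi$, $\Hess$ and $\tau_g\phi$ straight, and, in the forward direction, carrying the pullback by $\psi_{t_0}$ correctly through the identification of $V$ as a $g(t_0)$-gradient. The single point that is genuinely forced, rather than a free choice, is the linearity of $c$: the two displayed formulas for $\dt g$ agree only when $\dot c$ equals the constant $2\sigma$.
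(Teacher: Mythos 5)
Your proposal is correct and follows essentially the same route as the paper: differentiating the soliton ansatz (\ref{4.eq2}) at $t=0$ (with a time-shift for general $t_0$) to obtain (\ref{4.eq3}) with $\sigma=\tfrac{1}{2}\dot c(0)$, and, for the converse, taking $c(t)=1+2\sigma t$ and the rescaled gradient field $X(t)=\D f/c(t)$, then verifying $(RH)_\alpha$ via the same naturality and scaling identities for $\Rc{}$, $\D\phi\otimes\D\phi$, $\Hess$ and $\tau_g\phi$. Your closing observation that linearity of $c$ is forced by $\dot c=2\sigma$ matches the paper's construction exactly.
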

\begin{proof}
Suppose we have a soliton solution to $(RH)_\alpha$. Without loss of
generality, $c(0)=1$ and $\psi_0=\id_M$. Thus, the solution
satisfies
\begin{align*}
-2\Rc{g(0)}+2\alpha(\D\phi\otimes\D\phi)(0) &= \dt g(t)|_{t=0} =
\dt(c(t)\psi_t^*g(0))|_{t=0}\\%
&= \dot{c}(0)g(0)+\sL_{X(0)}g(0) =
\dot{c}(0)g(0)+2\Hess(f(\cdot,0)),
\end{align*}
where $X(t)$ is the family of vector fields generating $\psi_t$.
Moreover, we compute
\begin{equation*}
(\tau_g\phi)(0) = \dt \phi(t)|_{t=0} = \sL_{X(0)}\phi(0) =
\scal{\D\phi,\D f}.
\end{equation*}
Together, this proves (\ref{4.eq3}) with
$\sigma=\tfrac{1}{2}\dot{c}(0)$ for $t_0=0$. Hence, by a
time-shifting argument, (\ref{4.eq3}) must hold for any
$t_0\in[0,T)$. One can easily see that
$\sigma(t_0)=\dot{c}(t_0)/2c(t_0)$.%
\newline

Conversely, let $(g(0),\phi(0))$ solve (\ref{4.eq3}) for some
function $f$ on $M$. Define $c(t):=1+2\sigma t$ and $X(t):=\D
f/c(t)$. Let $\psi_t$ be the diffeomorphisms generated by the family
of vector fields $X(t)$ (with $\psi_0=\id_M$) and define
$(g(t),\phi(t))$ as in (\ref{4.eq2}). For $\sigma<0$ this is
possible on the time interval $t\in[0,\tfrac{-1}{2\sigma})$, in the
case $\sigma\geq 0$ it is possible for $t\in[0,\infty)$. Then
\begin{align*}
\dt g(t) &= \dot{c}(t)\psi_t^*(g(0)) + c(t)\psi_t^*(\sL_{X(t)}g(0))
= \psi_t^*(2\sigma g(0) +\sL_{\D f}g(0))\\%
&= \psi_t^*(2\sigma g(0) + 2\Hess(f)) =
\psi_t^*\big({-2}\Rc{g(0)}+2\alpha(\D\phi\otimes\D\phi)(0)\big)\\%
&= -2\Rc{g(t)}+2\alpha(\D\phi\otimes\D\phi)(t),
\end{align*}
as well as
\begin{align*}
\dt\phi(t)&=\psi_t^*(\sL_{X(t)}\phi(0))=\psi_t^*\scal{\D\phi(0),\D
f/c(t)} = c(t)^{-1}\psi_t^*(\tau_{g(0)}\phi(0))\\%
&= c(t)^{-1}\tau_{\psi_t^*g(0)}\phi(t) = \tau_{g(t)}\phi(t).
\end{align*}
This means that $(g(t),\phi(t))$ is a solution of $(RH)_\alpha$ and
thus a soliton solution.
\end{proof}
By Lemma \ref{4.lemma2} and rescaling, we may assume that $c(t)=T-t$
for shrinking solitons (here $T$ is the maximal time of existence
for the flow), $c(t)=1$ for steady solitons and $c(t)=t-T$ for
expanding solitons (where $T$ defines a \emph{birth} time). An
example for a soliton solution is the very first example from this
section, where $(M,g(0))=(N,\gamma)=(\mathbb{S}^2,g_{\mathbb{S}^2})$
and $\phi(0)=\id$. For $\alpha<1$, the soliton is shrinking, for
$\alpha=1$ steady and for $\alpha>1$ expanding. Since $\phi_t=\id_M$
for all $t$ in all three cases, these are gradient solitons with
potential $f=0$.%
\newline

Taking the trace of the first equation in (\ref{4.eq3}), we see 
that a soliton must satisfy
\begin{equation}\label{4.eq4}
R-\alpha\abs{\D\phi}^2+\Lap f + \sigma m = 0.
\end{equation}
Taking covariant derivatives in (\ref{4.eq3}) and using the 
twice traced second Bianchi identity $\D_jR_{ij}=\tfrac{1}{2}\D_iR$, we obtain 
(analogous to the corresponding equation for soliton solutions of
the Ricci flow)
\begin{equation}\label{4.eq5}
R-\alpha\abs{\D\phi}^2+\abs{\D f}^2 + 2\sigma f = const.
\end{equation}
Finally, with $f(\cdot,t)=\psi_t^*(f(\cdot,0))$, we get
\begin{equation}\label{4.eq6}
\dt f = \sL_Xf = \abs{\D f}^2.
\end{equation}
Combining this with (\ref{4.eq4}), we obtain the evolution equation
\begin{equation}\label{4.eq7}
(\dt + \Lap)f = \abs{\D f}^2-R+\alpha\abs{\D\phi}^2-\sigma m.
\end{equation}
For steady solitons, for which the formulas
(\ref{4.eq3})--(\ref{4.eq7}) hold with $\sigma=0$, equation (\ref{4.eq7})
is equivalent to $u= e^{-f}$ solving the adjoint heat equation
\begin{equation}\label{4.eq8}
\Box^*u=-\dt u-\Lap u +Ru-\alpha \abs{\D \phi}^2 u = 0.
\end{equation}
For shrinking solitons, (\ref{4.eq3})--(\ref{4.eq7}) hold with
$\sigma(t)=-\tfrac{1}{2}(T-t)^{-1}$ and (\ref{4.eq7}) is equivalent
to $u= (4\pi(T-t))^{-m/2}e^{-f}$ solving the adjoint heat equation.
Finally, for expanding solitons, $\sigma(t)=+\tfrac{1}{2}(t-T)^{-1}$
and (\ref{4.eq7}) is equivalent to the fact that $u=
(4\pi(t-T))^{-m/2}e^{-f}$ solves the adjoint heat equation
(\ref{4.eq8}).

\section{The $(RH)_\alpha$ flow as a gradient flow}
In this section, we introduce an energy functional $\sF_\alpha$ for
the $(RH)_\alpha$ flow, which corresponds to Perelman's $\sF$-energy
for the Ricci flow introduced in \cite[Section 1]{Perelman:entropy}.
For a detailed study of Perelman's functional, we refer to Chow et
al. \cite[Chapter 5]{RF:TAI}, M\"{u}ller \cite[Chapter
3]{Muller:Harnack}, or Topping \cite[Chapter 6]{Topping:Lectures}.
In the special case $N\subseteq\RR$, the corresponding functional was 
introduced by List \cite{List:diss}. We follow his work closely in the first 
part of this section.

\subsection{The energy functional and its first variation}%
Let $g=g_{ij}\in \Gamma\big(\Sym^2_{+}(T^*M)\big)$ be a Riemannian
metric on a closed manifold $M$, $f:M\to\RR$ a smooth function and
$\phi\in C^\infty(M,N):=\{\phi\in C^\infty(M,\RR^d)\mid \phi(M)
\subseteq N\}$. For a constant $\alpha(t)\equiv \alpha>0$, we set
\begin{equation}\label{1.eq1}
\sF_\alpha(g,\phi,f) := \int_M \Big(R_g + \abs{\D f}^2_g -
\alpha\abs{\D\phi}^2_g\Big)e^{-f}dV_g.
\end{equation}
Take variations
\begin{align*}
g_{ij}^\eps &= g_{ij}+\eps h_{ij}, &h_{ij} &\in\Gamma
\big(\Sym^2(T^*M)\big),\\%
f^\eps &= f+\eps \ell, &\ell &\in C^\infty(M),\\%
\phi^\eps &= \pi_N(\phi + \eps \vartheta), &\vartheta &\in
C^\infty(M,\RR^d) \text{ with } \vartheta(x) \in T_{\phi(x)}N,
\end{align*}
where $\pi_N$ is the smooth nearest-neighbour projection defined on
a tubular neighbourhood of $N \subset \RR^d$. Note that we used the
identification $T_pN \subset T_p\RR^d \cong \RR^d$. We denote by
$\delta$ the derivative $\delta =\frac{d}{d\eps}\big|_{\eps=0}$,
i.e.~we have $\delta g=h$, $\delta f = \ell$ and $\delta \phi =
(d\pi_N \circ \phi)\vartheta = \vartheta$. Our goal is to compute
\begin{align*}
\delta \sF_{\alpha,g,\phi,f}(h,\vartheta,\ell)&:=
\frac{d}{d\eps}\Big|_{\eps=0} \sF_\alpha(g+\eps h,\pi_N(\phi +
\eps\vartheta), f+\eps\ell)\\%
&\phantom{:}= \underbrace{\delta\int_M \big(R + \abs{\D f}^2\big)
e^{-f}dV}_{=:\;I} - \;\alpha \cdot \underbrace{\delta\int_M
\abs{\D\phi}^2 e^{-f}dV}_{=:\;I\!I}.
\end{align*}
For the variation of the first integral, we know from Ricci flow
theory that
\begin{equation*}
I = \int_M \Big({-h^{ij}}\big(R_{ij} + \D_i\D_j f\big) +
\big(\tfrac{1}{2} \tr_g h - \ell\big)\big(2\Lap f - \abs{\D f}^2
+R\big)\Big) e^{-f}dV,
\end{equation*}
see Perelman \cite[Section 1]{Perelman:entropy}, or M\"{u}ller
\cite[Lemma 3.3]{Muller:Harnack}, for a detailed proof. For the
variation of the second integral, we compute with a partial
integration
\begin{align*}
I\!I &= \int_M 2g^{ij}\D_i \phi^\lambda \D_j \vartheta^\lambda e^{-f}dV +
\int_M \Big(-h^{ij}\D_i\phi^\lambda \D_j \phi^\lambda +
\abs{\D\phi}^2_g \big(\tfrac{1}{2}\tr_g h - \ell\big)\Big)
e^{-f}dV,\\%
&= \int_M \Big({-2}\vartheta^\lambda\big(\Lap_g \phi^\lambda
-\Scal{\D\phi^\lambda,\D f}_g\big) -h^{ij}\D_i\phi^\lambda \D_j
\phi^\lambda + \abs{\D\phi}^2_g \big(\tfrac{1}{2}\tr_g h -
\ell\big)\Big) e^{-f}dV.
\end{align*}
Hence, by putting everything together, we find
\begin{align}
\delta \sF_{\alpha,g,\phi,f}(h,\vartheta,\ell) &= \int_M
-h^{ij}\big(R_{ij} + \D_i\D_j f-\alpha \D_i \phi \D_j
\phi\big)e^{-f}dV\notag\\%
&\quad+ \int_M \big(\tfrac{1}{2}\tr_g h - \ell\big)\big(2\Lap f -
\abs{\D f}^2 + R - \alpha \abs{\D \phi}^2\big)e^{-f}dV\label{1.eq2}\\%
&\quad+\int_M 2\alpha\vartheta \big(\tau_g \phi - \Scal{\D\phi,\D
f}\big) e^{-f}dV,\notag
\end{align}
since $\vartheta\Lap_g \phi = \vartheta\tau_g \phi$, where $\tau_g
\phi := \Lap_g \phi - A(\phi)(\D\phi,\D\phi)_M$ denotes the tension
field of $\phi$.

\subsection{Gradient flow for fixed background measure}%
Now, we fix the measure $d\mu = e^{-f}dV$, i.e.~let $f=
-\log\big(\frac{d\mu}{dV}\big)$, where $\frac{d\mu}{dV}$ denotes the
Radon-Nikodym differential of measures. Then, from $0= \delta d\mu =
\left(\frac{1}{2}\tr_g h -\ell\right)d\mu$ we get $\ell =
\frac{1}{2}\tr_g h$. Thus, for a fixed measure $\mu$ the functional
$\sF_\alpha$ and its variation $\delta \sF_\alpha$ depend only on
$g$ and~$\phi$ and their variations $\delta g = h$ and $\delta \phi
= \vartheta$. In the following we write
\begin{equation}\label{1.eq3}
\sF^\mu_\alpha(g,\phi) :=
\sF_\alpha\big(g,\phi,-\log\big(\tfrac{d\mu}{dV}\big)\big)
\end{equation}
and
\begin{equation*}
\delta \sF^\mu_{\alpha,g,\phi}(h,\vartheta) := \delta
\sF_{\alpha,g,\phi,-\log(\frac{d\mu}{dV})}
\left(h,\vartheta,\tfrac{1}{2}\tr_g h\right).
\end{equation*}
Equation (\ref{1.eq2}) reduces to
\begin{equation}\label{1.eq4}
\delta \sF^\mu_{\alpha,g,\phi}(h,\vartheta) = \int_M
\Big({-h^{ij}}\big(R_{ij}+\D_i\D_j f - \alpha \D_i \phi
\D_j\phi\big)+ 2\alpha\vartheta \big(\tau_g \phi-\Scal{\D\phi,\D
f}\big) d\mu.
\end{equation}
Let $(g,\phi)\in \Gamma(\Sym^2_{+}(T^*M))\times C^{\infty}(M,N)$ and
define on $H:=H_{g,\phi}=\Gamma(\Sym^2(T^*M))\times T_\phi
C^\infty(M,N)$ an inner product depending on $\alpha$ and the
measure $\mu$ by
\begin{equation*}
\Scal{(k_{ij},\psi),(h_{ij},\vartheta)}_{H,\alpha,\mu}:= \int_M
\big(\tfrac{1}{2} h^{ij}k_{ij} + 2\alpha\psi\vartheta\big)d\mu.
\end{equation*}
From $\delta \sF^\mu_{\alpha,g,\phi}(h,\vartheta) = \Scal{\grad
\sF^\mu_\alpha(g,\phi),(h,\vartheta)}_{H,\alpha,\mu}$ we then deduce
\begin{equation}\label{1.eq5}
\grad \sF^\mu_\alpha(g,\phi) = \big(-2(R_{ij}+\D_i\D_j f -\alpha\D_i
\phi \D_j \phi),\; \tau_g \phi -\Scal{\D\phi,\D f}\big).
\end{equation}
Let $\pi_1$, $\pi_2$ denote the natural projections of $H$ onto its
first and second factors, respectively. Then, the gradient flow of
$\sF^\mu_\alpha$ is
\begin{equation*}
\left\{\begin{aligned}\dt g_{ij} &= \pi_1(\grad
\sF^\mu_\alpha(g,\phi)),\\%
\dt\phi &=\pi_2(\grad \sF^\mu_\alpha(g,\phi)).\end{aligned}\right.
\end{equation*}
Thus, recalling the equation $\dt f = \ell = \frac{1}{2}\tr_g
\!\big(\dt g_{ij}\big)$, we obtain the gradient flow system
\begin{equation}\label{1.eq6}
\left\{\begin{aligned}\dt g_{ij} &= -2(R_{ij}+\D_i\D_j f -\alpha
\D_i
\phi \D_j \phi),\\%
\dt \phi &= \tau_g \phi - \Scal{\D\phi,\D f},\\%
\dt f &= -R -\Lap f + \alpha \abs{\D\phi}^2.\end{aligned}\right.
\end{equation}%

\subsection{Pulling back with diffeomorphisms}%
As one can do for the Ricci flow (see Perelman \cite[Section
1]{Perelman:entropy} or M\"{u}ller \cite[page 52]{Muller:Harnack}), we
now pull back a solution $(g,\phi,f)$ of (\ref{1.eq6}) with a family
of diffeomorphisms generated by $X=\D f$. Indeed, recalling the
formulas for the Lie derivatives $(\mathcal{L}_{\D f}g)_{ij}= 2
\D_i\D_j f$, $\mathcal{L}_{\D f}\phi = \Scal{\D\phi,\D f}$ and
$\mathcal{L}_{\D f}f = \abs{\D f}^2$, we can rewrite (\ref{1.eq6})
in the form
\begin{equation*}
\left\{\begin{aligned}\dt g &= -2\Rc{}+2\alpha\D \phi \otimes\D
\phi - \big(\mathcal{L}_{\D f}g\big),\\%
\dt \phi &= \tau_g \phi - \big(\mathcal{L}_{\D f}\phi\big),\\%
\dt f &= -\Lap f + \abs{\D f}^2 -R + \alpha \abs{\D\phi}^2 -
\big(\mathcal{L}_{\D f}f\big).\end{aligned}\right.
\end{equation*}
Hence, if $\psi_t$ is the one-parameter family of diffeomorphisms
induced by the vector field $X(t)=\D f(t) \in \Gamma(TM)$, $t \in
[0,T)$, i.e.~if $\dt \psi_t = X(t) \circ \psi_t$, $\psi_0 = \id$,
then the pulled-back quantities $\tilde{g}=\psi^*_t g$,
$\tilde{\phi}=\psi^*_t \phi$, $\tilde{f}=\psi^*_t f$ satisfy
\begin{equation*}
\left\{\begin{aligned}\dt \tilde{g} &= -2\tilde{\Rc{}}+2\alpha
\D \tilde{\phi} \otimes \D \tilde{\phi},\\%
\dt \tilde{\phi} &= \tau_{\tilde{g}} \tilde{\phi},\\%
\dt \tilde{f} &= -\Lap_{\tilde{g}} \tilde{f} + \abs{\D
\tilde{f}}^2_{\tilde{g}} -\tilde{R} + \alpha
\abs{\D\tilde{\phi}}^2_{\tilde{g}}.\end{aligned}\right.
\end{equation*}
Here, $\tilde{\Rc{}}$ and $\tilde{R}$ denote the Ricci and scalar
curvature of $\tilde{g}$ and $\Lap$, $\tau$ and the norms are also
computed with respect to $\tilde{g}$. In the following, we will
usually consider the pulled-back gradient flow system and therefore
drop the tildes for convenience of notation.%
\newline

Note that the formal adjoint of the heat operator $\Box=\dt-\Lap$
under the flow $\dt g = h$ is $\Box^*=-\dt-\Lap-\frac{1}{2}\tr_g h$.
Indeed, for functions $v,w\colon M\times[0,T]\to \RR$, a
straightforward computation yields
\begin{equation*}
\int_0^T \int_M (\Box v)w \; dV\,dt = \bigg[\int_M vw \; dV
\bigg]_0^T + \int_0^T \int_M v(\Box^*w) \; dV \,dt.
\end{equation*}
In our case where $h_{ij}=-2R_{ij}+2\alpha \D_i\phi \D_j\phi$, this
is $\Box^*=-\dt-\Lap +R-\alpha \abs{\D \phi}^2$ and thus the
evolution equation for $f$ is equivalent to $e^{-f}$ solving the
adjoint heat equation $\Box^*e^{-f}=0$. The system now reads
\begin{equation}\label{1.eq7}
\left\{\begin{aligned}\dt g &= -2\Rc{} + 2\alpha \D\phi \otimes \D\phi,\\%
\dt \phi &= \tau_g \phi,\\%
0 &= \Box^*e^{-f}. \end{aligned}\right.
\end{equation}
This means that $(RH)_\alpha$ can be interpreted as the
gradient flow of $\sF^\mu_\alpha$ for any fixed background measure
$\mu$. Moreover, using (\ref{1.eq4}), (\ref{1.eq5}) and the
diffeomorphism invariance of $\sF_\alpha$, we get the following.
\begin{prop}\label{1.prop1}
Let $(g(t),\phi(t))_{t \in [0,T)}$ be a solution of the
$(RH)_\alpha$ flow with coupling constant $\alpha(t)\equiv \alpha>0$
and let\/ $e^{-f}$ solve the adjoint heat equation under this flow.
Then the energy functional $\sF_\alpha(g,\phi,f)$ defined in
(\ref{1.eq1}) is non-decreasing with
\begin{equation}\label{1.eq8}
\frac{d}{dt} \sF_\alpha = \int_M \Big(2\Abs{\Rc{}-\alpha \D\phi
\otimes \D\phi+\Hess(f)}^2 +2\alpha \Abs{\tau_g \phi-\Scal{\D\phi,\D
f}}^2\Big) e^{-f}dV \geq 0.
\end{equation}
Moreover, $\sF_\alpha$ is constant if and only if $(g(t),\phi(t))$
is a steady soliton.
\end{prop}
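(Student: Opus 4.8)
The plan is to compute $\tfrac{d}{dt}\sF_\alpha$ directly, exploiting the gradient-flow structure established in the preceding paragraphs. First I would use the diffeomorphism invariance of $\sF_\alpha$: since $(RH)_\alpha$ with $e^{-f}$ solving $\Box^\ast e^{-f}=0$ is, modulo the family of diffeomorphisms generated by $\D f$, the gradient flow of $\sF_\alpha^\mu$ for the fixed background measure $\mu = e^{-f}dV$, the value $\sF_\alpha(g(t),\phi(t),f(t))$ equals $\sF_\alpha^\mu(\bar g(t),\bar\phi(t))$ along the genuine gradient flow $\eqref{1.eq6}$ of $\sF_\alpha^\mu$, where $\bar g, \bar\phi$ are the pullbacks. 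Therefore $\tfrac{d}{dt}\sF_\alpha$ equals the squared norm of $\grad\sF_\alpha^\mu$ in the inner product $\scal{\cdot,\cdot}_{H,\alpha,\mu}$, which by \eqref{1.eq5} is exactly
\begin{equation*}
\tfrac{d}{dt}\sF_\alpha = \Norm{\grad\sF_\alpha^\mu(g,\phi)}_{H,\alpha,\mu}^2 = \int_M \Big(\tfrac{1}{2}\abs{{-2}(R_{ij}+\D_i\D_jf-\alpha\D_i\phi\D_j\phi)}^2 + 2\alpha\abs{\tau_g\phi-\scal{\D\phi,\D f}}^2\Big)d\mu,
\end{equation*}
which simplifies to \eqref{1.eq8}. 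Alternatively, and perhaps more transparently, I would plug the evolution equations $\dt g = h$ with $h_{ij}=-2R_{ij}+2\alpha\D_i\phi\D_j\phi$, $\dt\phi=\tau_g\phi$, $\dt f = -R-\Lap f+\alpha\abs{\D\phi}^2$ (equivalently $\tfrac12\tr_g h=\dt f$, i.e.\ $\ell=\tfrac12\tr_g h$, which is exactly the fixed-measure condition) directly into the first-variation formula \eqref{1.eq2}. Because $\ell=\tfrac12\tr_g h$, the entire middle term of \eqref{1.eq2} vanishes, and one is left with the $h^{ij}$-term and the $\vartheta$-term; substituting $h_{ij}=-2(R_{ij}-\alpha\D_i\phi\D_j\phi)$ and $\vartheta=\tau_g\phi$ — but now compared against $R_{ij}+\D_i\D_jf-\alpha\D_i\phi\D_j\phi$ and $\tau_g\phi-\scal{\D\phi,\D f}$ respectively — produces a cross term. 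To turn it into a perfect square one uses the fact that $e^{-f}$ solves the adjoint heat equation together with an integration by parts: $\int_M \Hess(f)_{ij}(R_{ij}-\alpha\D_i\phi\D_j\phi)e^{-f}dV$ must be matched up, via $\divop$-type identities and the contracted second Bianchi identity, with terms involving $\scal{\D\phi,\D f}$ and $\abs{\Hess f}^2$. This is the standard Ricci-flow computation of Perelman adapted with the $\phi$-terms carried along.

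For the equality case, I would argue as follows. From \eqref{1.eq8}, $\tfrac{d}{dt}\sF_\alpha\equiv 0$ on an interval forces $\Rc{}-\alpha\D\phi\otimes\D\phi+\Hess(f)=0$ and $\tau_g\phi-\scal{\D\phi,\D f}=0$ pointwise for all $t$; by \eqref{4.eq7}--\eqref{4.eq8} (with $\sigma=0$) the $f$-equation in \eqref{1.eq6} is then consistent, and these are precisely the steady soliton equations \eqref{4.eq3} with $\sigma=0$. Invoking the converse direction of Lemma \ref{4.lemma2}, the existence of a potential $f$ satisfying \eqref{4.eq3} with $\sigma=0$ means the solution is (up to the diffeomorphisms $\psi_t$ generated by $\D f$) a steady gradient soliton. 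Conversely, if $(g(t),\phi(t))$ is a steady soliton with potential $f$, then \eqref{4.eq3} holds with $\sigma=0$, the integrands in \eqref{1.eq8} vanish identically, and $\sF_\alpha$ is constant. One subtlety worth spelling out: along a steady soliton, the natural companion function is $f=\psi_t^\ast f(\cdot,0)$, and one should check this is indeed the solution of the adjoint heat equation $\Box^\ast e^{-f}=0$ — but this is exactly the content of the remark following \eqref{4.eq8}, so it may simply be cited.

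The main obstacle is the integration-by-parts bookkeeping that converts the raw expression from \eqref{1.eq2} into the manifestly nonnegative square in \eqref{1.eq8}; in the pure Ricci-flow case this is the well-known identity $\tfrac{d}{dt}\sF = 2\int_M\abs{\Rc{}+\Hess f}^2 e^{-f}dV$, and here one must verify that all the extra terms generated by $-\alpha\D\phi\otimes\D\phi$ in $h$ and by the harmonic-map-flow equation organize themselves, after using $\D_j R_{ij}=\tfrac12\D_i R$, the Bochner-type identity for $\abs{\D\phi}^2$, and the definition of $\tau_g\phi$, into exactly the $2\alpha\abs{\tau_g\phi-\scal{\D\phi,\D f}}^2 e^{-f}$ contribution with no leftover cross terms. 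Since \eqref{1.eq5} already records $\grad\sF_\alpha^\mu$, the cleanest route really is the first, conceptual one — read off $\tfrac{d}{dt}\sF_\alpha = \Norm{\grad\sF_\alpha^\mu}^2_{H,\alpha,\mu}\geq 0$ from the gradient-flow identity and diffeomorphism invariance — relegating the explicit identity to a remark or to \cite{List:diss} for the scalar case. I would present that as the proof and keep the verification of the simplifications to a minimum.
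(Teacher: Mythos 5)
Your primary route is exactly the paper's: the paper gives no separate computation but reads off \eqref{1.eq8} from the gradient-flow structure, i.e.\ from \eqref{1.eq4}, \eqref{1.eq5} and the diffeomorphism invariance of $\sF_\alpha$, so that $\tfrac{d}{dt}\sF_\alpha=\Norm{\grad\sF^\mu_\alpha}^2_{H,\alpha,\mu}\geq 0$, just as you propose. Your handling of the equality case (equality in \eqref{1.eq8} giving the steady gradient soliton equations, with Lemma \ref{4.lemma2} and the remark after \eqref{4.eq8} for the converse) is a correct filling-in of what the paper leaves implicit.
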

Allowing also time-dependent coupling constants $\alpha(t)$, we
obtain the following.
\begin{cor}\label{1.cor2}
Let $(g(t),\phi(t))_{t \in [0,T)}$ solve $(RH)_\alpha$ for a
positive coupling function $\alpha(t)$ and let\/ $e^{-f}$ solve the
adjoint heat equation under this flow. Then
$\sF_{\alpha(t)}(g(t),\phi(t),f(t))$ satisfies
\begin{equation*}
\frac{d}{dt} \sF_\alpha = \int_M \Big(2\Abs{\Rc{}-\alpha \D\phi
\otimes \D\phi+\Hess(f)}^2 +2\alpha \Abs{\tau_g \phi-\Scal{\D\phi,\D
f}}^2 - \dot{\alpha} \abs{\D\phi}^2 \Big) e^{-f}dV,
\end{equation*}
in particular, it is non-decreasing if $\alpha(t)$ is a
non-increasing function.
\end{cor}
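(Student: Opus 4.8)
The plan is to reduce the statement to the constant-coupling case of Proposition \ref{1.prop1} by separating the time dependence of $\sF_{\alpha(t)}(g(t),\phi(t),f(t))$ into the part coming from the evolution of $(g,\phi,f)$ and the part coming from the explicit dependence of $\sF_\alpha$ on the parameter $\alpha$. Writing $G(t,\beta):=\sF_\beta\big(g(t),\phi(t),f(t)\big)$ for the $(RH)_\alpha$ solution with coupling function $\alpha(t)$, so that $\sF_{\alpha(t)}(g(t),\phi(t),f(t))=G(t,\alpha(t))$, the chain rule gives
\[
\frac{d}{dt}\,\sF_{\alpha(t)}\big(g(t),\phi(t),f(t)\big) = \partial_t G\big(t,\alpha(t)\big) + \dot\alpha(t)\,\partial_\beta G\big(t,\alpha(t)\big).
\]

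First I would identify $\partial_t G(t,\alpha(t))$ with the integral on the right-hand side of (\ref{1.eq8}). Indeed, $\partial_t G(t,\beta)$ with $\beta$ held fixed is the derivative of $\sF_\beta$ along the curve $t\mapsto(g(t),\phi(t),f(t))$, obtained by plugging the flow equations $\dt g=-2\Rc{}+2\alpha\D\phi\otimes\D\phi$, $\dt\phi=\tau_g\phi$ together with the identity $\dt f=-\Lap f+\abs{\D f}^2-R+\alpha\abs{\D\phi}^2$ (equivalent to $\Box^*e^{-f}=0$) into the first-variation formula (\ref{1.eq2}) with coupling constant $\beta$, and then carrying out the measure reduction, the diffeomorphism-invariance argument of Section 3, and a completion of squares — exactly as in the proof of Proposition \ref{1.prop1}. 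Evaluating at $\beta=\alpha(t)$, the coupling appearing in the flow equations and in $\sF_\beta$ coincide at that instant, so the squares close just as in the constant case and one obtains
\[
\partial_t G\big(t,\alpha(t)\big) = \int_M\Big(2\Abs{\Rc{}-\alpha\D\phi\otimes\D\phi+\Hess(f)}^2 + 2\alpha\Abs{\tau_g\phi-\Scal{\D\phi,\D f}}^2\Big)e^{-f}dV.
\]

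Next I would compute $\partial_\beta G$ directly from the definition (\ref{1.eq1}): since $\sF_\beta(g,\phi,f)=\int_M(R+\abs{\D f}^2-\beta\abs{\D\phi}^2)e^{-f}dV$ depends on $\beta$ only through the single linear term $-\beta\abs{\D\phi}^2$, we have $\partial_\beta G(t,\beta)=-\int_M\abs{\D\phi}^2 e^{-f}dV$. Adding the two contributions yields the asserted evolution equation
\[
\frac{d}{dt}\,\sF_\alpha = \int_M\Big(2\Abs{\Rc{}-\alpha\D\phi\otimes\D\phi+\Hess(f)}^2 + 2\alpha\Abs{\tau_g\phi-\Scal{\D\phi,\D f}}^2 - \dot\alpha\,\abs{\D\phi}^2\Big)e^{-f}dV.
\]
Finally I would note that if $\alpha(t)$ is non-increasing then $\dot\alpha\le 0$, so $-\dot\alpha\abs{\D\phi}^2\ge 0$; combined with $\alpha(t)>0$ and the manifest non-negativity of the two squared terms, this gives $\frac{d}{dt}\sF_\alpha\ge 0$, i.e.~$t\mapsto\sF_{\alpha(t)}(g(t),\phi(t),f(t))$ is non-decreasing.

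I do not expect a genuine obstacle. The only point requiring attention is the one highlighted above: one must check that the proof of Proposition \ref{1.prop1} uses the coupling constant only through its instantaneous value — equivalently, that it nowhere invokes $\dot\alpha=0$ — so that the frozen-coupling time derivative $\partial_t G(t,\alpha(t))$ genuinely reproduces the same two non-negative squares. Anyone wishing to bypass this bookkeeping can instead simply rerun the computation behind (\ref{1.eq8}) starting from (\ref{1.eq2}), now additionally carrying along the variation of the explicit $\alpha$ in the integrand; this contributes precisely the term $-\dot\alpha\abs{\D\phi}^2$ and leaves everything else unchanged.
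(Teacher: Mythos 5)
Your proposal is correct and follows the same route as the paper: the paper states the corollary as an immediate extension of Proposition \ref{1.prop1}, the only new ingredient being the extra term $-\dot\alpha\abs{\D\phi}^2$ produced by the explicit time dependence of $\alpha$ in the integrand of (\ref{1.eq1}), while the rest of the computation behind (\ref{1.eq8}) only uses the instantaneous value of $\alpha$ and goes through unchanged. Your chain-rule splitting into the frozen-coupling derivative plus $\dot\alpha\,\partial_\beta\sF_\beta$ is just a tidy formalization of exactly that observation, and the sign discussion for the monotonicity conclusion is correct.
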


\subsection{Minimizing over all probability measures}%
Following Perelman \cite{Perelman:entropy}, we define
\begin{equation}\label{1.eq13}
\lambda_\alpha(g,\phi):= \inf\big\{\sF^\mu_\alpha(g,\phi)\;\big|\;
\mu(M)=1\big\} = \inf\left\{\sF_\alpha(g,\phi,f)\;\bigg|\; \int_M
e^{-f}dV =1\right\}.
\end{equation}
The first task is to show that the infimum is always achieved.
Indeed, if we set $v=e^{-f/2}$, we can write the energy as
\begin{equation*}
\sF_\alpha(g,\phi,v)=\int_M \Big(Rv^2 + 4\abs{\D
v}^2-\alpha\abs{\D\phi}^2v^2\Big)dV = \int_M v\Big(Rv - 4\Lap v -
\alpha\abs{\D\phi}^2v\Big)dV.
\end{equation*}
Hence
\begin{equation*}
\lambda_\alpha(g,\phi)=\inf\left\{\int_M v\Big(Rv - 4\Lap v -
\alpha\abs{\D\phi}^2v\Big)dV \;\bigg| \; \int_M v^2 dV=1\right\}
\end{equation*}
is the smallest eigenvalue of the operator $-4\Lap{} +R
-\alpha\abs{\D\phi}^2$ and $v$ is a corresponding normalized
eigenvector. Since the operator (for any time $t$ and map $\phi(t)$)
is a Schr\"{o}dinger operator, there exists a unique positive and
normalized eigenvector $v_{min}(t)$, see for example Reed and Simon
\cite{ReedSimon} and Rothaus \cite{Rothaus:Schroedinger}. From
eigenvalue perturbation theory, we see that if $g(t)$ and $\phi(t)$
depend smoothly on $t$, then so do $\lambda_\alpha(g(t),\phi(t))$
and $v_{min}(t)$.
\begin{prop}\label{1.prop3}
Let $(g(t),\phi(t))_{t\in[0,T)}$ be a smooth solution of the
$(RH)_\alpha$ flow with constant $\alpha(t)\equiv\alpha>0$. Then
$\lambda_\alpha(g,\phi)$ as defined in (\ref{1.eq13}) is monotone
non-decreasing in time and it is constant if and only if
\begin{equation}\label{1.eq14}
\left\{\begin{aligned}0 &= \Rc{}-\alpha\D\phi\otimes\D\phi+\Hess(f),\\%
0 &= \tau_g \phi -\scal{\D\phi,\D f}, \end{aligned}\right.
\end{equation}
for the minimizing function $f=-2\log v_{min}$.
\end{prop}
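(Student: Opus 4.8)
The plan is to exploit the fact that $\lambda_\alpha$ is defined as an infimum together with the gradient-flow structure of $\sF^\mu_\alpha$. First I would fix a time $t_0 \in [0,T)$, let $v_{min}(t_0) = e^{-f_0/2}$ be the unique positive normalized eigenvector at time $t_0$, and \emph{define} $f(t)$ for $t$ near $t_0$ by solving the adjoint heat equation $\Box^* e^{-f} = 0$ \emph{backwards in time} with terminal data $f(t_0) = f_0$. This is a linear parabolic equation solved backwards, which is well-posed; and since the adjoint heat equation preserves the integral $\int_M e^{-f}dV$, the condition $\int_M e^{-f(t)}dV = 1$ holds for all $t$ in the interval. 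By the very definition of $\lambda_\alpha$ as an infimum over probability measures, we have $\lambda_\alpha(g(t),\phi(t)) \leq \sF_\alpha(g(t),\phi(t),f(t))$ for all such $t$, with equality at $t = t_0$.

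Next I would invoke Proposition \ref{1.prop1}: since $(g(t),\phi(t))$ solves $(RH)_\alpha$ and $e^{-f(t)}$ solves the adjoint heat equation, the function $t \mapsto \sF_\alpha(g(t),\phi(t),f(t))$ is non-decreasing. Combining this with the inequality above, for $t < t_0$ we get
\begin{equation*}
\lambda_\alpha(g(t),\phi(t)) \leq \sF_\alpha(g(t),\phi(t),f(t)) \leq \sF_\alpha(g(t_0),\phi(t_0),f(t_0)) = \lambda_\alpha(g(t_0),\phi(t_0)),
\end{equation*}
which is exactly monotonicity of $\lambda_\alpha$. (Since $t_0$ was arbitrary, one gets monotonicity on the whole interval; smoothness of $t \mapsto \lambda_\alpha(g(t),\phi(t))$ from eigenvalue perturbation theory, already noted in the excerpt, makes this rigorous.)

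For the equality case: if $\lambda_\alpha$ is constant on $[0,T)$, then for each $t_0$ the chain of inequalities above collapses, so $\sF_\alpha(g(t),\phi(t),f(t))$ is constant in $t$ near $t_0$. By the equality statement in Proposition \ref{1.prop1}, $(g(t),\phi(t))$ is then a steady soliton, and by Lemma \ref{4.lemma2} the soliton identities hold; in particular, writing out formula (\ref{1.eq8}) and using that it vanishes, both squared terms vanish, giving exactly (\ref{1.eq14}) with $f = f(t)$. It remains to identify this $f$ with $-2\log v_{min}$: at $t_0$ we chose $f(t_0) = f_0 = -2\log v_{min}(t_0)$ by construction, so (\ref{1.eq14}) holds for the minimizer. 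Conversely, if (\ref{1.eq14}) holds at some time for $f = -2\log v_{min}$, then the right-hand side of (\ref{1.eq8}) vanishes at that time; one checks that this forces the evolution to be a steady soliton and hence (\ref{1.eq14}) and the constancy of $\sF_\alpha$ — and therefore of $\lambda_\alpha$ — propagate in time, using the uniqueness of solitons through given initial data from Lemma \ref{4.lemma2}.

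The main obstacle I anticipate is \textbf{not} the monotonicity itself — that is the standard Perelman argument — but rather the care needed in the equality case to pass from "$\sF_\alpha$ is constant along the particular backward-constructed $f$" to the clean statement involving $v_{min}$ for \emph{all} $t$, and in checking that the constructed $f(t)$ stays smooth and that $-2\log v_{min}(t)$ indeed satisfies the adjoint heat equation (a priori $v_{min}$ only satisfies the eigenvalue equation pointwise in $t$, not the parabolic equation; the reconciliation uses that at a point of constancy the minimizer must be critical for the flow, so its time derivative is dictated by (\ref{1.eq6})). This bookkeeping — matching the two characterizations of the optimal $f$ — is where the proof needs to be written carefully rather than quoted.
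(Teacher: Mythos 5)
Your proposal is correct and takes essentially the same route as the paper: solve the adjoint heat equation backwards in time from $u=v_{min}^2$ at the later time (noting that positivity and the normalization $\int_M e^{-f}dV=1$ are preserved), combine the infimum characterization of $\lambda_\alpha$ with the monotonicity of $\sF_\alpha$ from Proposition \ref{1.prop1}, and obtain (\ref{1.eq14}) in the equality case from the vanishing of both squared terms in (\ref{1.eq8}). Your extra detour through the steady-soliton characterization in the equality case is harmless but unnecessary; the paper reads (\ref{1.eq14}) off directly from (\ref{1.eq8}).
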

\begin{proof}
Pick arbitrary times $t_1$, $t_2 \in [0,T)$ and let $v_{min}(t_2)$
be the unique positive minimizer for $\lambda_\alpha
(g(t_2),\phi(t_2))$. Put $u(t_2)=v_{min}^2(t_2)>0$ and solve the
adjoint heat equation $\Box^*u=0$ backwards on $[t_1,t_2]$. Note
that $u(x',t')>0$ for all $x'\in M$ and $t'\in [t_1,t_2]$ by the
maximum principle and the constraint $\int_M u \;dV=\int_M v^2 \;dV
=1$ is preserved since
\begin{equation*}
\frac{d}{dt} \int_M u\;dV = \int_M \big(\dt u\big)dV + \int_M
u\big(\dt dV\big) = -\int_M\Lap u\; dV = 0.
\end{equation*}
Here, we used $\dt dV = -\tfrac{1}{2}\tr_g(\dt g)dV =
(-R+\alpha\abs{\D\phi}^2)dV$ and $\dt u = (-\Lap +R-\alpha \abs{\D
\phi}^2)u$, the latter following from $\Box^*u=0$. Thus, with $u(t)=
e^{-\bar{f}(t)}$ for all $t\in [t_1,t_2]$, we obtain with
Proposition \ref{1.prop1}
\begin{equation*}
\lambda_\alpha(g(t_1),\phi(t_1))\leq
\sF_\alpha(g(t_1),\phi(t_1),\bar{f}(t_1)) \leq
\sF_\alpha(g(t_2),\phi(t_2),\bar{f}(t_2)) =
\lambda_\alpha(g(t_2),\phi(t_2)).
\end{equation*}
The condition (\ref{1.eq14}) in the equality case follows directly
from (\ref{1.eq8}).
\end{proof}
Again the monotonicity of $\lambda_\alpha(g,\phi)$ is preserved if
we allow a positive non-increasing coupling function $\alpha(t)$ 
instead of a time-independent positive constant $\alpha$.

\section{Short-time existence and evolution equations}
Due to diffeomorphism invariance, our flow is only weakly parabolic.
In fact, the principal symbol for the first equation is the same as
for the Ricci flow since the additional term is of lower order.
Thus, one cannot directly apply the standard parabolic existence
theory. Fortunately, shortly after Hamilton's first proof of
short-time existence for the Ricci flow in \cite{Hamilton:3folds}
which was based on the Nash-Moser implicit function theorem, DeTurck
\cite{DeTurck:trick} found a substantially simpler proof which can
easily be modified to get an existence proof for our system
$(RH)_\alpha$. Note that we only consider the case where $M$ is
closed, but following Shi's short-time existence proof in
\cite{Shi:complete} for the Ricci flow on complete noncompact
manifolds, one can also prove more general short-time existence
results for our flow. We first recall some results for the Ricci
flow, following the presentation of Hamilton \cite{Hamilton:survey}
very closely.%
\newline

Since some results strongly depend on the curvature of $(N,\gamma)$,
it is more convenient to work with $\phi:M\to N$ itself instead of
$e_N\circ\phi:M\to \RR^d$ as in the last section. Therefore,
repeated Greek indices are summed over from $1$ to $n = \dim N$ in
this section.

\subsection{Dual Ricci-Harmonic and Ricci-DeTurck flow}%
Let $g(t)$ be a solution of the Ricci flow and
$\psi(t)\colon(M,g)\to (M,h)$ a one parameter family of smooth maps
satisfying the harmonic map flow $\dt \psi = \tau_g
\psi$ with respect to the evolving metric $g$. Note that this is $(RH)_{\alpha\equiv 0}$. 
If $\psi(t)$ is a diffeomorphism at time
$t=0$, it will stay a diffeomorphism for at least a short time. Now,
we consider the push-forward $\tilde{g}:=\psi_*g$ of the metric $g$
under $\psi$. The evolution equation for $\tilde{g}$ reads
\begin{equation}\label{2.eq1}
\dt \tilde{g}_{ij}= -2\tilde{R}_{ij} + (\mathcal{L}_V
\tilde{g})_{ij} = -2\tilde{R}_{ij} + \tilde{\D}_iV_j +
\tilde{\D}_jV_i,
\end{equation}
where $\tilde{\D}$ denotes the Levi-Civita connection of $\tilde{g}$
and $\dt \psi = -V \circ \psi$. One calls this the dual
Ricci-Harmonic flow or also the $h$-flow. An easy computation (see
DeTurck \cite{DeTurck:trick} or Chow and Knopf \cite[Chapter
3]{RF:intro}) shows that $V$ is given by
\begin{equation}\label{2.eq2}
V^\ell=\tilde{g}^{ij}\big(\!\phantom{l}^{\tilde{g}}\Gamma^\ell_{ij}-
\!\phantom{l}^h\Gamma^\ell_{ij}\big),
\end{equation}
the trace of the tensor which is the difference between the
Christoffel symbols of the connections of $\tilde{g}$
and of $h$, respectively. Note that the evolution equation of
$\tilde{g}$ involves only the metrics $\tilde{g}$ and $h$ and not
the metric $g$, and since it involves $\!\phantom{l}^h\Gamma$ for
the fixed background metric $h$ it is no longer diffeomorphism
invariant. Indeed, one can show (see Hamilton \cite[Section
6]{Hamilton:survey}) that
\begin{equation}\label{2.eq3}
\dt \tilde{g}_{ij} = \tilde{g}^{k\ell}\tilde{\D}_k \hat{\D}_\ell
\tilde{g}_{ij},
\end{equation}
where $\hat{\D}$ denotes the connection of the
background metric $h$. Since $\hat{\D}$ is independent of
$\tilde{g}$ and $\tilde{\D}$ only involves first derivatives of
$\tilde{g}$ this is a quasilinear equation. Its principal symbol is
$\sigma(\xi)=\tilde{g}^{ij}\xi_i \xi_j\cdot \id$, where $\id$ is the
identity on tensors $\tilde{g}$. Hence this flow equation is
strictly parabolic and we get short-time existence from the standard
parabolic theory for quasilinear equations, see e.g.~\cite{ManMar}
for a recent and detailed proof. If we additionally 
assume that $(M,h)=(M,g_0)$ and $\psi(0)=\id_M$,
the flow $\tilde{g}$ which has the same initial data $\tilde{g}(0)
=g_0$ as $g$ is called the Ricci-DeTurck flow \cite{DeTurck:trick}.%
\newline

Now, one can find a solution to the Ricci flow with smooth initial
metric $g(0)=g_0$ as follows. Chose any diffeomorphism
$\psi(0)\colon M\to M$. Since $g(0)$ is smooth, its push-forward
$\tilde{g}(0)$ is also smooth and equation (\ref{2.eq3}) has a
smooth solution for a short time. Next, one computes the vector
field $V$ with (\ref{2.eq2}) and solves the ODE system
\begin{equation*}
\dt \psi = -V \circ \psi.
\end{equation*}
One then recovers $g$ as the pull-back $g=\psi^*\tilde{g}$. This
method also proves uniqueness of the Ricci flow. Indeed, let
$g^1(t)$ and $g^2(t)$ be two solutions of the Ricci flow equation
for $t\in [0,T)$ satisfying $g^1(0)=g^2(0)$. Then one can solve the
harmonic map heat flows $\dt \psi^i=\tau_{g^i}\psi^i$, $i\in\{1,2\}$
with $\psi^1(0)=\psi^2(0)$. This yields two solutions $\tilde{g}^i =
\psi^i_*g^i$ of the dual Ricci-Harmonic map flow with the same
initial values, hence they must agree. Then the corresponding vector
fields $V^i$ agree and the two ODE systems $\dt \psi^i=-V^i
\circ\psi^i$ with the same initial data must have the same solutions
$\psi^1\equiv\psi^2$. Hence also the pull-back metrics $g^1$ and
$g^2$ must agree for all $t\in [0,T)$.%
\newline

For the dual Ricci-Harmonic flow, the evolution equations in 
coordinate form, using only the fixed connection $\hat{\D}$ of
the background metric $h$, are (see e.g.~Simon \cite{Simon:C0metrics})
\begin{equation}\label{2.eq4}
\begin{aligned}\dt \tilde{g}_{ij}&=\tilde{g}^{k\ell}
\hat{\D}_k\hat{\D}_\ell\tilde{g}_{ij}-\tilde{g}^{k\ell}\tilde{g}_{ip}
h^{pq}\hat{R}_{jkq\ell}-\tilde{g}^{k\ell}\tilde{g}_{jp}h^{pq}
\hat{R}_{ikq\ell}\\%
&\quad + \tfrac{1}{2}\tilde{g}^{k\ell}\tilde{g}^{pq}
\big(\hat{\D}_i\tilde{g}_{pk}\hat{\D}_j\tilde{g}_{q\ell}
+2\hat{\D}_k\tilde{g}_{ip}\hat{\D}_q\tilde{g}_{j\ell}
-2\hat{\D}_k\tilde{g}_{ip}\hat{\D}_\ell\tilde{g}_{jq}
-4\hat{\D}_i\tilde{g}_{pk}\hat{\D}_\ell\tilde{g}_{jq}\big),
\end{aligned}
\end{equation}
where $\hat{R}_{ijkl}=(\Rm{h})_{ijkl}$ denotes the Riemannian curvature
tensor of $h$.%
\newline

Recent work of Isenberg, Guenther and Knopf
\cite{IsenbergGuentherKnopf}, Schn\"{u}rer, Schulze and Simon
\cite{SchnuererSchulzeSimon} and others shows that DeTurck's trick
is not only useful to prove short-time existence for the Ricci flow,
but is also useful for convergence and stability results. Their results show that
the $h$-flow itself is also interesting to study. In this article
however, we only use it as a technical tool.

\subsection{Short-time existence and uniqueness for $(RH)_\alpha$}%
Let $(g(t),\phi(t))_{t\in[0,T)}$ be a solution of the $(RH)_\alpha$
flow with initial data $(g(0),\phi(0))=(g_0,\phi_0)$. As for the
Ricci-DeTurck flow above, we now let $\psi(t)\colon(M,g(t))
\to(M,g_0)$ be a solution of the harmonic map heat flow $\dt
\psi=\tau_g\psi$ with $\psi(0)=\id_M$ and denote by
$(\tilde{g}(t),\tilde{\phi}(t))$ the push-forward of
$(g(t),\phi(t))$ with $\psi$. Analogous to formula (\ref{2.eq1})
above, we find
\begin{equation}\label{2.eq5}
\begin{aligned}
\dt \tilde{g}_{ij} &= \psi_*(\dt g)_{ij}+ (\mathcal{L}_V
\tilde{g})_{ij} = -2\tilde{R}_{ij}+2\alpha
\D_i\tilde{\phi}\D_j\tilde{\phi}+ \tilde{\D}_iV_j
+ \tilde{\D}_jV_i,\\%
\dt \tilde{\phi} &= \psi_*(\dt \phi)+ \mathcal{L}_V\tilde{\phi}=
\tau_{\tilde{g}}\tilde{\phi}+\scal{\D\tilde{\phi},V},\end{aligned}
\end{equation}
where $V^\ell=\tilde{g}^{ij}
(\!\phantom{l}^{\tilde{g}}\Gamma^\ell_{ij}-
\!\phantom{l}^{g_0}\Gamma^\ell_{ij})$ and $\tilde{\D}$ denotes the
covariant derivative with respect to $\tilde{g}$. Note that
$\D_i\tilde{\phi}\D_j\tilde{\phi}=(d\tilde{\phi}\otimes
d\tilde{\phi})_{ij}$ as well as
$\scal{\D\tilde{\phi},V}=d\tilde{\phi}(V)$ are independent of the
choice of the metric. Using (\ref{2.eq3}), we find
\begin{equation}\label{2.eq6}
\dt \tilde{g}_{ij} = \tilde{g}^{k\ell}\tilde{\D}_k \hat{\D}_\ell
\tilde{g}_{ij}+2\alpha(\D\tilde{\phi}\otimes\D\tilde{\phi})_{ij},
\end{equation}
which is again quasilinear strictly parabolic. The explicit
evolution equation involving only the fixed Levi-Civita connection
$\hat{\D}$ of $g_0$ can be found from (\ref{2.eq4}) by adding
$2\alpha\hat{\D}_i\tilde{\phi}\hat{\D}_j\tilde{\phi}$ on the right
and replacing $h$ by $g_0$.%
\newline

The evolution equation for $\tilde{\phi}(t)$ in terms of $\hat{\D}$
can be computed as follows. Using normal coordinates on $(N,\gamma)$
we have $\hN\Gamma^\lambda_{\mu\nu}=0$ at the base point and thus
$\tau_{\tilde{g}}\tilde{\phi}=\Lap_{\tilde{g}}\tilde{\phi}$. We find
\begin{equation}\label{2.eq7}
\begin{aligned}
\dt \tilde{\phi}^\lambda &= \Lap_{\tilde{g}}\tilde{\phi}^\lambda +
\scal{\D\tilde{\phi}^\lambda,V} =
\tilde{g}^{k\ell}\big(\partial_k\partial_\ell\tilde{\phi}^\lambda-
\!\phantom{l}^{\tilde{g}}\Gamma^j_{k\ell}\D_j\tilde{\phi}^\lambda\big)
+\D_j\tilde{\phi}^\lambda V^j\\%
&= \tilde{g}^{k\ell}\big(\hat{\D}_k\hat{\D}_\ell\tilde{\phi}^\lambda
+\!\phantom{l}^{g_0}\Gamma^j_{k\ell}\D_j\tilde{\phi}^\lambda
-\!\phantom{l}^{\tilde{g}}\Gamma^j_{k\ell}\D_j\tilde{\phi}^\lambda\big)+
\D_j\tilde{\phi}^\lambda\cdot\tilde{g}^{k\ell}
\big(\!\phantom{l}^{\tilde{g}}\Gamma^j_{k\ell}-
\!\phantom{l}^{g_0}\Gamma^j_{k\ell}\big)\\%
&= \tilde{g}^{k\ell}\hat{\D}_k\hat{\D}_\ell\tilde{\phi}^\lambda.
\end{aligned}
\end{equation}
Putting these results together, we have proved the following.
\begin{prop}\label{2.prop1}
Let $(g(t),\phi(t))$ be a solution of the $(RH)_\alpha$ flow with
initial data $(g(0),\phi(0))=(g_0,\phi_0)$. Let
$\psi(t)\colon(M,g(t)) \to(M,g_0)$ solve the harmonic map heat flow
$\dt \psi=\tau_g\psi$ with $\psi(0)=\id_M$ and let
$(\tilde{g}(t),\tilde{\phi}(t))$ denote the push-forward of
$(g(t),\phi(t))$ with $\psi$. Let $\hat{\D}$ be the (fixed)
Levi-Civita connection with respect to $g_0$. Then the dual
$(RH)_\alpha$ flow $(\tilde{g}(t),\tilde{\phi}(t))$ satisfies
\begin{align*}
\dt \tilde{g}_{ij}&=\tilde{g}^{k\ell}
\hat{\D}_k\hat{\D}_\ell\tilde{g}_{ij}-\tilde{g}^{k\ell}\tilde{g}_{ip}
g_0^{pq}\hat{R}_{jkq\ell}-\tilde{g}^{k\ell}\tilde{g}_{jp}g_0^{pq}
\hat{R}_{ikq\ell}+2\alpha\hat{\D}_i\tilde{\phi}\hat{\D}_j\tilde{\phi}\\%
&\quad + \tfrac{1}{2}\tilde{g}^{k\ell}\tilde{g}^{pq}
\big(\hat{\D}_i\tilde{g}_{pk}\hat{\D}_j\tilde{g}_{q\ell}
+2\hat{\D}_k\tilde{g}_{ip}\hat{\D}_q\tilde{g}_{j\ell}
-2\hat{\D}_k\tilde{g}_{ip}\hat{\D}_\ell\tilde{g}_{jq}
-4\hat{\D}_i\tilde{g}_{pk}\hat{\D}_\ell\tilde{g}_{jq}\big),\\%
\dt \tilde{\phi}^\lambda &= \tilde{g}^{k\ell}\hat{\D}_k\hat{\D}_\ell
\tilde{\phi}^\lambda +
\tilde{g}^{k\ell}\big(\hN\Gamma^\lambda_{\mu\nu}\circ\phi\big)
\hat{\D}_k\tilde{\phi}^\mu\hat{\D}_\ell\tilde{\phi}^\nu.
\end{align*}
In particular, the principal symbol for both equations is
$\sigma(\xi)=\tilde{g}^{ij}\xi_i \xi_j\cdot \id$, i.e.~the
push-forward flow is a solution to a system of strictly parabolic
equations.
\end{prop}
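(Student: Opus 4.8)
The proof is essentially organizational: it assembles the three computations preceding the statement into coordinate form and then reads off the principal symbols. First I would record the evolution of the push-forward metric. Since $\tilde g=\psi_*g$ with $\dt\psi=-V\circ\psi$, the chain rule gives $\dt\tilde g_{ij}=\psi_*(\dt g)_{ij}+(\mathcal{L}_V\tilde g)_{ij}$, and because $\D\phi\otimes\D\phi$ is natural under diffeomorphisms its push-forward is simply $\D\tilde\phi\otimes\D\tilde\phi$; this is the first line of (\ref{2.eq5}). Invoking Hamilton's identity (\ref{2.eq3}), which involves only $\tilde g$ and $g_0$ and so carries over verbatim, replaces the curvature-plus-Lie-derivative part by $\tilde g^{k\ell}\tilde\D_k\hat\D_\ell\tilde g_{ij}$, yielding (\ref{2.eq6}).

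Next I would expand $\tilde g^{k\ell}\tilde\D_k\hat\D_\ell\tilde g_{ij}$ purely in terms of the fixed background connection $\hat\D$. Writing $\tilde\D=\hat\D+\Gamma$ with $\Gamma$ the difference tensor (a universal expression in $\tilde g^{-1}$ and $\hat\D\tilde g$), substituting and commuting derivatives produces the second-order term $\tilde g^{k\ell}\hat\D_k\hat\D_\ell\tilde g_{ij}$, the two Riemann-curvature contractions $\tilde g^{k\ell}\tilde g_{ip}g_0^{pq}\hat R_{jkq\ell}$ and its $i\leftrightarrow j$ partner, and a collection of first-order terms quadratic in $\hat\D\tilde g$ with two factors of $\tilde g^{-1}$. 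I will not grind through this; it is precisely the Ricci-DeTurck / $h$-flow computation recorded in Simon \cite{Simon:C0metrics}, i.e.\ formula (\ref{2.eq4}) with $h$ replaced by $g_0$, to which one simply adds the already first-order term $2\alpha\hat\D_i\tilde\phi\hat\D_j\tilde\phi$ coming from (\ref{2.eq6}). For $\tilde\phi$, from the second line of (\ref{2.eq5}) and $V^\ell=\tilde g^{ij}(\!\phantom{l}^{\tilde g}\Gamma^\ell_{ij}-\!\phantom{l}^{g_0}\Gamma^\ell_{ij})$, the $\!\phantom{l}^{\tilde g}\Gamma$ and $\!\phantom{l}^{g_0}\Gamma$ contributions cancel against those hidden in $\Lap_{\tilde g}$ and $\hat\D_k\hat\D_\ell$, leaving $\tilde g^{k\ell}\hat\D_k\hat\D_\ell\tilde\phi^\lambda$ when $N$ is put in normal coordinates at the base point, exactly as in (\ref{2.eq7}); reinstating the target-manifold Christoffel correction of the tension field then gives the additional term $\tilde g^{k\ell}(\hN\Gamma^\lambda_{\mu\nu}\circ\phi)\hat\D_k\tilde\phi^\mu\hat\D_\ell\tilde\phi^\nu$.

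Finally, for the parabolicity claim I would observe that in both displayed equations every term other than $\tilde g^{k\ell}\hat\D_k\hat\D_\ell(\cdot)$ involves at most one derivative of $\tilde g$ and $\tilde\phi$, so after freezing coefficients the principal symbol in direction $\xi$ is $\tilde g^{ij}\xi_i\xi_j\cdot\id$, which is positive definite since $\tilde g$ is a Riemannian metric; hence the coupled system is strictly quasilinear parabolic, short-time existence and uniqueness follow from the standard parabolic theory, and one recovers $(g,\phi)$ by solving the ODE $\dt\psi=-V\circ\psi$ and pulling back, just as for the Ricci flow. The only genuinely technical step is the bookkeeping of curvature and quadratic gradient terms in the second paragraph, but this is the classical DeTurck computation with no new input, since $\phi$ enters the metric equation only through the lower-order term $2\alpha\D\tilde\phi\otimes\D\tilde\phi$.
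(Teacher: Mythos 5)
Your proposal is correct and follows essentially the same route as the paper: derive the push-forward evolution (\ref{2.eq5}), invoke the Ricci--DeTurck identity (\ref{2.eq3}) to get (\ref{2.eq6}) and its coordinate form (\ref{2.eq4}) with $h$ replaced by $g_0$ plus the lower-order term $2\alpha\hat{\D}_i\tilde{\phi}\hat{\D}_j\tilde{\phi}$, and compute $\dt\tilde{\phi}$ in normal coordinates on $N$ as in (\ref{2.eq7}) before reinstating the $\hN\Gamma$ correction, after which the principal symbol is read off exactly as in the paper.
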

Short-time existence and uniqueness for the dual flow (and hence
also for the $(RH)_\alpha$ flow itself) now follow exactly as in the
simpler case of the Ricci and the dual Ricci-Harmonic flow described
above.%

\subsection{Evolution equations for $R$, $\mathrm{Rc}$,
$\abs{\D\phi}^2$ and $\D\phi\otimes\D\phi$}%
In the following, we often use commutator identities on bundles like 
$T^*M \otimes \phi^*TN$. The necessary formulas are collected in the 
appendix. We denote the Riemannian curvature tensor on $(N,\gamma)$ 
by $\NRm{}$ and let $\Rm{}$, $\Rc{}$ and $R$ be the Riemannian, Ricci 
and scalar curvature on $(M,g)$. Moreover, we write
\begin{align*}
\scal{\Rc{},\D\phi\otimes\D\phi}&:=R_{ij}\D_i\phi^\kappa\D_j\phi^\kappa,\\%
\Scal{\!\NRm{\D_i\phi,\D_j\phi}\D_j\phi, \D_i\phi} &:= \hN
\!R_{\kappa\mu\lambda\nu}\D_i
\phi^\kappa\D_j\phi^\mu\D_i\phi^\lambda\D_j\phi^\nu.
\end{align*}
Finally, we use the fact that $\tau_g\phi=\D_p\D_p\phi$ for the
covariant derivative $\D$ on $T^*M \otimes \phi^*TN$, cf.~Jost
\cite[Section 8.1]{Jost:Riemannian}. From the commutator identities
in the appendix, we immediately obtain for $\phi \in C^\infty(M,N)$
\begin{equation}\label{4.Lap}
\begin{split}
\Lap_g(\D_i\phi\D_j\phi) &=
\D_i\tau_g\phi\D_j\phi+\D_i\phi\D_j\tau_g\phi +
2\D_i\D_p\phi\D_j\D_p\phi\\%
&\quad\,+ R_{ip}\D_p\phi\D_j\phi + R_{jp}\D_p\phi\D_i\phi -
2\Scal{\!\NRm{\D_i\phi,\D_p\phi}\D_p\phi,\D_j\phi}.
\end{split}
\end{equation}
\begin{rem}
Taking the trace and using $\tau_g\phi=0$, we find the well-known
Bochner identity for harmonic maps, cf.~Jost \cite[Section
8.7]{Jost:Riemannian},
\begin{equation*}
-\Lap_g \abs{\D\phi}^2 + 2\abs{\D^2\phi}^2 +
2\scal{\Rc{},\D\phi\otimes\D\phi} =
2\Scal{\!\NRm{\D_i\phi,\D_j\phi}\D_j\phi,\D_i\phi}.
\end{equation*}
\end{rem}
Now, we compute the evolution equations for the scalar and Ricci
curvature on $M$.
\begin{prop}\label{2.prop3}
Let $(g(t),\phi(t))$ be a solution to the $(RH)_\alpha$ flow
equation. Then the scalar curvature evolves according to
\begin{equation}\label{2.eq9}
\begin{aligned}
\dt R &= \Lap R + 2\abs{\Rc{}}^2-4\alpha
\scal{\Rc{},\D\phi\otimes\D\phi}
+2\alpha\abs{\tau_g\phi}^2-2\alpha\abs{\D^2\phi}^2\\
&\quad\,+ 2\alpha \Scal{\!\NRm{\D_i\phi,\D_j\phi} \D_j\phi,\D_i\phi}
\end{aligned}
\end{equation}
and the Ricci curvature evolves by
\begin{equation}\label{2.eq10}
\begin{aligned}
\dt R_{ij} &= \Lap_L R_{ij}-2R_{iq}R_{jq}+2R_{ipjq}R_{pq}
+2\alpha\,\tau_g\phi\D_i\D_j\phi-2\alpha\D_p\D_i\phi\D_p\D_j\phi\\
&\quad\,+2\alpha R_{pijq}\D_p\phi\D_q\phi +2\alpha
\Scal{\!\NRm{\D_i\phi,\D_p\phi}\D_p\phi,\D_j\phi}.
\end{aligned}
\end{equation}
Here, $\Lap_L$ denotes the Lichnerowicz Laplacian, introduced in
\cite{Lichnerowicz}, which is defined on symmetric two-tensors
$t_{ij}$ by
\begin{equation*}
\Lap_L t_{ij} := \Lap
t_{ij}+2R_{ipjq}t_{pq}-R_{ip}t_{pj}-R_{jp}t_{pi}.
\end{equation*}
\end{prop}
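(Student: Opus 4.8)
The plan is to compute the evolution equations for $R$ and $R_{ij}$ directly from the known variation formulas for curvature under a general metric flow $\partial_t g_{ij} = v_{ij}$, then specialize to $v_{ij} = -2R_{ij} + 2\alpha\D_i\phi\D_j\phi$. Recall from Ricci flow theory (e.g.\ Hamilton \cite{Hamilton:survey} or Topping \cite{Topping:Lectures}) that under $\partial_t g_{ij} = v_{ij}$ with trace $v = g^{ij}v_{ij}$,
\begin{equation*}
\dt R = -\Lap v + \D_i\D_j v_{ij} - \scal{v, \Rc{}},
\end{equation*}
and there is an analogous (longer) formula for $\dt R_{ij}$ involving $\Lap_L v_{ij}$ and first and second covariant derivatives of $v$. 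So the first step is to plug $v_{ij} = -2R_{ij} + 2\alpha\D_i\phi\D_j\phi$ (hence $v = -2R + 2\alpha\abs{\D\phi}^2$, using that $\alpha$ is constant here) into these formulas.

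The Ricci-flow part of the computation is classical: the terms coming from $-2R_{ij}$ reproduce the standard evolution $\dt R = \Lap R + 2\abs{\Rc{}}^2$ and $\dt R_{ij} = \Lap_L R_{ij} - 2R_{iq}R_{jq} + 2R_{ipjq}R_{pq}$, using the contracted second Bianchi identity $\D_jR_{ij} = \tfrac12\D_iR$ to simplify $\D_i\D_j R_{ij}$. The genuinely new work is handling the coupling term $w_{ij} := 2\alpha\D_i\phi\D_j\phi$. For the scalar curvature one needs $-\Lap(\tr w) + \D_i\D_j w_{ij} - \scal{w,\Rc{}}$, i.e.\ $-2\alpha\Lap\abs{\D\phi}^2 + 2\alpha\D_i\D_j(\D_i\phi\D_j\phi) - 2\alpha\scal{\Rc{},\D\phi\otimes\D\phi}$. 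The term $\D_i\D_j(\D_i\phi\D_j\phi)$ expands via the product rule and a commutation of covariant derivatives on $T^*M\otimes\phi^*TN$ (from the appendix), producing $\D_j\tau_g\phi\,\D_j\phi + \abs{\D^2\phi}^2$ plus curvature terms of both $M$ and $N$; combined with the Bochner-type identity (\ref{4.Lap}) one gets exactly the claimed terms $2\alpha\abs{\tau_g\phi}^2 - 2\alpha\abs{\D^2\phi}^2 + 2\alpha\scal{\NRm{\D_i\phi,\D_j\phi}\D_j\phi,\D_i\phi}$, while the $M$-curvature terms cancel against $-\scal{w,\Rc{}}$ up to the surviving $-4\alpha\scal{\Rc{},\D\phi\otimes\D\phi}$ (one factor $-2\alpha$ from the Bochner identity and $-2\alpha$ from $-\scal{w,\Rc{}}$). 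Similarly, for $\dt R_{ij}$ one feeds $w_{ij}$ through the general formula and uses (\ref{4.Lap}) to recognize $\Lap_L$ acting on $\D_i\phi\D_j\phi$, leaving the claimed lower-order terms; alternatively, and more cleanly, one observes that (\ref{4.Lap}) says precisely $\Lap_L(\D_i\phi\D_j\phi) = \Lap(\D_i\phi\D_j\phi) + 2R_{ipjq}\D_p\phi\D_q\phi - R_{ip}\D_p\phi\D_j\phi - R_{jp}\D_p\phi\D_i\phi$, so the bundle Laplacian computation is already done.

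The main obstacle will be the bookkeeping in the $\dt R_{ij}$ case: the general first-variation formula for Ricci curvature has several terms with derivatives of $v$, and one must carefully commute covariant derivatives — both on $M$ and on the pullback bundle $\phi^*TN$ — to collect everything into the stated combination of $\Lap_L R_{ij}$, the quadratic curvature terms, and the four $\phi$-terms. The relevant commutator identities are exactly those collected in the appendix, so the strategy is to invoke them verbatim rather than re-deriving them. A minor subtlety is keeping track of the fact that $\partial_t$ does not commute with $\D$ and that the Christoffel symbols themselves evolve; this is handled by the standard variation-of-connection formula $\delta\Gamma^k_{ij} = \tfrac12 g^{k\ell}(\D_i v_{j\ell} + \D_j v_{i\ell} - \D_\ell v_{ij})$, which one then specializes to the present $v$. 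Once the Ricci-flow terms are separated out by linearity, the remaining computation is a finite, if somewhat lengthy, tensor manipulation that closes up exactly.
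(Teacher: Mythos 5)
Your proposal is correct and follows essentially the same route as the paper: the paper likewise starts from the general variation formula for $\dt R_{ij}$ under $\dt g_{ij}=h_{ij}$ (its equation (\ref{2.eq11})), splits $h_{ij}=-2R_{ij}+2\alpha\D_i\phi\D_j\phi$ by linearity, and uses the commutator identity (\ref{4.Lap}) to handle the $\D\phi\otimes\D\phi$ part. The only cosmetic difference is that the paper obtains (\ref{2.eq9}) by tracing (\ref{2.eq10}) together with the $\dt g^{ij}$ contribution, rather than from the scalar variation formula directly, which changes nothing of substance.
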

\begin{proof}
We know that for $\dt g_{ij} = h_{ij}$ the evolution equation for
the Ricci tensor is given by
\begin{equation}\label{2.eq11}
\dt R_{ij} = -\tfrac{1}{2}\Lap_L h_{ij}+\tfrac{1}{2}\D_i\D_p
h_{pj}+\tfrac{1}{2}\D_j\D_p h_{pi}-\tfrac{1}{2}\D_i\D_j(\tr_g h),
\end{equation}
see for example \cite[Proposition 1.4]{Muller:Harnack} for a
proof of this general variation formula. For $h_{ij}=-2R_{ij}$, we
obtain (with the twice contracted second Bianchi identity) 
$\dt R_{ij}=\Lap_L R_{ij}$. For $h_{ij}=2\D_i\phi\D_j\phi$, we 
compute, using (\ref{4.Lap})
\begin{align*}
\dt R_{ij}&=-\Lap_L(\D_i\phi\D_j\phi)+\D_i\D_p(\D_p\phi\D_j\phi)+
\D_j\D_p(\D_p\phi\D_i\phi)-\D_i\D_j(\D_p\phi\D_p\phi)\\%
&= 2\tau_g\phi\D_i\D_j\phi-2\D_i\D_p\phi\D_j\D_p\phi
-2R_{ipjq}\D_p\phi\D_q\phi
+2\Scal{\!\NRm{\D_i\phi,\D_p\phi}\D_p\phi,\D_j\phi}.
\end{align*}
Linearity then yields (\ref{2.eq10}). For the evolution equation for
$R$, use
\begin{equation*}
\dt R =\dt (g^{ij}R_{ij})=g^{ij}(\dt
R_{ij})+(2R_{ij}-2\alpha\D_i\phi\D_j\phi)R_{ij}.
\end{equation*}
The desired evolution equation (\ref{2.eq9}) follows. An alternative
and more detailed proof can be found in the author's thesis
\cite[Proposition 2.3]{Muller:diss}.
\end{proof}
Next, we compute the evolution equations for $\abs{\D\phi}^2$ and
$\D\phi\otimes\D\phi$.
\begin{prop}\label{2.prop4}
Let $(g(t),\phi(t))$ be a solution of\/ $(RH)_\alpha$. Then the
energy density of $\phi$ satisfies the evolution equation
\begin{equation}\label{2.eq12}
\dt\abs{\D\phi}^2=\Lap\abs{\D\phi}^2-2\alpha\abs{\D\phi\otimes\D\phi}^2
-2\abs{\D^2\phi}^2 +2\Scal{\!\NRm{\D_i\phi,\D_j\phi}
\D_j\phi,\D_i\phi}.
\end{equation}
Furthermore, we have
\begin{equation}\label{2.eq13}
\begin{aligned}
\dt (\D_i\phi\D_j\phi) &=\Lap(\D_i\phi\D_j\phi)-
2\D_p\D_i\phi\D_p\D_j\phi-R_{ip}\D_p\phi\D_j\phi-
R_{jp}\D_p\phi\D_i\phi\\
&\quad\,+ 2\Scal{\!\NRm{\D_i\phi,\D_p\phi}\D_p\phi,\D_j\phi}.
\end{aligned}
\end{equation}
\end{prop}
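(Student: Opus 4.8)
The strategy is to compute the evolution equations directly, splitting $\dt g = h$ into the two contributions $h_{ij} = -2R_{ij}$ and $h_{ij} = 2\alpha\D_i\phi\D_j\phi$, then combining. The key structural observation is that under $(RH)_\alpha$ the map $\phi$ evolves by $\dt\phi = \tau_g\phi$, but unlike in the pure harmonic map flow, the metric is also moving; nevertheless, the ``extra'' terms coming from the moving metric will combine nicely with the Bochner-type identity \eqref{4.Lap}. I would treat \eqref{2.eq13} first since \eqref{2.eq12} follows from it by tracing.

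For \eqref{2.eq13}, first I would write $\dt(\D_i\phi^\lambda\D_j\phi^\lambda) = \D_i(\dt\phi^\lambda)\D_j\phi^\lambda + \D_i\phi^\lambda\D_j(\dt\phi^\lambda) + (\text{correction from }\dt\Gamma)$. The correction term arises because $\D_i$ depends on the (evolving) metric $g$ through the Christoffel symbols: $\dt(\D_i\phi^\lambda) = \D_i(\dt\phi^\lambda) - (\dt\Gamma^k_{ij})\partial_k\phi^\lambda$-type terms; but since $\D_i\phi$ is a section of $T^*M\otimes\phi^*TN$, one must also account for the time-dependence of the connection on $\phi^*TN$ coming from $\dt\phi$. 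I would organize this carefully using the commutator formulas from the appendix. Substituting $\dt\phi = \tau_g\phi$ and using $\tau_g\phi = \D_p\D_p\phi$ gives $\D_i\D_p\D_p\phi\,\D_j\phi + \D_i\phi\,\D_j\D_p\D_p\phi$ plus lower-order curvature terms from commuting derivatives; comparing this with the right-hand side of the Bochner identity \eqref{4.Lap}, namely $\Lap_g(\D_i\phi\D_j\phi) = \D_i\tau_g\phi\,\D_j\phi + \D_i\phi\,\D_j\tau_g\phi + 2\D_i\D_p\phi\D_j\D_p\phi + R_{ip}\D_p\phi\D_j\phi + R_{jp}\D_p\phi\D_i\phi - 2\Scal{\NRm{\D_i\phi,\D_p\phi}\D_p\phi,\D_j\phi}$, I can solve for $\D_i\tau_g\phi\,\D_j\phi + \D_i\phi\,\D_j\tau_g\phi$ and substitute. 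The metric-variation correction terms, combined with the $R_{ip}\D_p\phi\D_j\phi$-type terms from \eqref{4.Lap}, should produce exactly the claimed coefficients $-R_{ip}\D_p\phi\D_j\phi - R_{jp}\D_p\phi\D_i\phi$ (note the sign flip relative to \eqref{4.Lap}, which is the signature that the metric-correction terms contribute a factor of $-2$ on these terms after cancellation with the $+$ terms in \eqref{4.Lap}).

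For \eqref{2.eq12}, I would trace \eqref{2.eq13} with $g^{ij}$, but carefully: $\dt\abs{\D\phi}^2 = \dt(g^{ij}\D_i\phi\D_j\phi) = (\dt g^{ij})\D_i\phi\D_j\phi + g^{ij}\dt(\D_i\phi\D_j\phi)$. The first term contributes $(2R_{ij} - 2\alpha\D_i\phi\D_j\phi)\D_i\phi\D_j\phi = 2\scal{\Rc{},\D\phi\otimes\D\phi} - 2\alpha\abs{\D\phi\otimes\D\phi}^2$. Tracing \eqref{2.eq13} gives $\Lap\abs{\D\phi}^2 - 2\abs{\D^2\phi}^2 - 2R_{ip}\D_p\phi\D_i\phi + 2\Scal{\NRm{\D_i\phi,\D_j\phi}\D_j\phi,\D_i\phi}$ (using $g^{ij}\Lap(\D_i\phi\D_j\phi) = \Lap\abs{\D\phi}^2$ since $\D g = 0$). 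Adding, the $2\scal{\Rc{},\D\phi\otimes\D\phi}$ from the $\dt g^{ij}$ term cancels the $-2R_{ip}\D_p\phi\D_i\phi = -2\scal{\Rc{},\D\phi\otimes\D\phi}$, leaving precisely \eqref{2.eq12}.

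**Main obstacle.** The delicate part is bookkeeping the metric-variation corrections to the covariant derivative $\D_i\phi$ as a section of $T^*M\otimes\phi^*TN$: one must track both the change in the Levi-Civita connection on $M$ (via $\dt\Gamma^k_{ij} = \tfrac12 g^{kl}(\D_i h_{jl} + \D_j h_{il} - \D_l h_{ij})$ with $h_{ij} = -2R_{ij} + 2\alpha\D_i\phi\D_j\phi$) and the change in the pulled-back connection on $\phi^*TN$ induced by $\dt\phi$. Getting the signs and the factor-of-two right on the $R_{ip}\D_p\phi\D_j\phi$ terms — ensuring the net coefficient is $-1$ rather than $+1$ — is where errors creep in, and it is exactly the point where the moving metric makes $(RH)_\alpha$ differ from the fixed-metric harmonic map flow. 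Everything else reduces to linear combination and the identity \eqref{4.Lap} already in hand.
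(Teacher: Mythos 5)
Your global strategy is the same as the paper's: establish \eqref{2.eq13} by comparing $\dt(\D_i\phi\D_j\phi)$ with the identity \eqref{4.Lap}, and then trace, letting the $\dt g^{ij}$-term produce $2\scal{\Rc{},\D\phi\otimes\D\phi}-2\alpha\abs{\D\phi\otimes\D\phi}^2$ and cancel the $-2R_{ip}\D_p\phi\D_i\phi$ coming from the trace of \eqref{2.eq13}; this trace step is exactly the paper's argument and is correct. The gap lies in the step you yourself single out as the main obstacle. There are \emph{no} metric-variation corrections in $\dt(\D_i\phi\D_j\phi)$ at all: the first covariant derivative of a map involves no Christoffel symbols of $(M,g)$, since $\D_i\phi^\lambda=\partial_i\phi^\lambda$, so $\dt\Gamma$ never enters; the only subtlety is the time dependence of the bundle $\phi^*TN$, which is absorbed by the covariant time derivative $\D_t$ from the appendix and produces no curvature terms at this order, $\D_t\D_i\phi=\D_i\big(\dt\phi\big)=\D_i\tau_g\phi$. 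Hence $\dt(\D_i\phi\D_j\phi)=\D_i\tau_g\phi\,\D_j\phi+\D_j\tau_g\phi\,\D_i\phi$ with no correction terms, and the minus sign on $R_{ip}\D_p\phi\D_j\phi+R_{jp}\D_p\phi\D_i\phi$ in \eqref{2.eq13} comes solely from solving \eqref{4.Lap} for $\D_i\tau_g\phi\,\D_j\phi+\D_i\phi\,\D_j\tau_g\phi$: the coefficient is $-1$ because these terms change sides, not because corrections contribute $-2$ against the $+1$ of \eqref{4.Lap}.

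If you go hunting for correction terms of the form $-2R_{ip}\D_p\phi\D_j\phi$ you will not find them, and inserting them would double-count: by \eqref{3.eq1}, $\dt\Gamma_{ij}^k$ consists of derivative-of-Ricci terms and $2\alpha\D_i\D_j\phi\D^k\phi$, which cannot produce undifferentiated Ricci terms. The corrections you anticipate (both $\dt\Gamma$ and the $\NRm{}$-term from the evolving pull-back connection) genuinely appear only one derivative higher, in the evolution \eqref{3.eq11} of the Hessian $\D_i\D_j\phi$, not here. Relatedly, your diagnosis that the sign of the Ricci terms is ``exactly where the moving metric makes $(RH)_\alpha$ differ from the fixed-metric harmonic map flow'' is off: \eqref{2.eq13} contains no $\alpha$ and holds verbatim for the harmonic map flow on a fixed background; the evolving metric enters only through the $\dt g^{ij}$ term in the trace computation for \eqref{2.eq12}, which you already handle correctly.
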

\begin{proof}
We start with the second statement. We have
\begin{align*}
\dt(\D_i\phi\D_j\phi) &= (\D_t\D_i\phi)\D_j\phi
+(\D_t\D_j\phi)\D_i\phi\\%
&=\D_i(\dt\phi)\D_j\phi + \D_j(\dt\phi)\D_i\phi\\%
&=\D_i\tau_g\phi\D_j\phi+\D_j\tau_g\phi\D_i\phi,
\end{align*}
where the meaning of the covariant time derivative $\D_t$ is
explained in the appendix. The desired evolution equation
(\ref{2.eq13}) now follows directly from (\ref{4.Lap}). We
obtain (\ref{2.eq12}) from (\ref{2.eq13}) by taking the trace,
\begin{align*}
\dt\abs{\D\phi}^2 &= (2R_{ij}-2\alpha\D_i\phi\D_j\phi)
\D_i\phi\D_j\phi + g^{ij}\dt(\D_i\phi\D_j\phi)\\
&=-2\alpha\abs{\D\phi\otimes\D\phi}^2+ \Lap\abs{\D\phi}^2
-2\abs{\D^2\phi}^2 +2\Scal{\!\NRm{\D_i\phi,\D_j\phi}
\D_j\phi,\D_i\phi}.\qedhere
\end{align*}
\end{proof}

\subsection{Evolution of $\sS=\mathrm{Rc}-\alpha\D\phi\otimes\D\phi$
and its trace}%
We write again $\sS:=\Rc{}-\alpha\D\phi\otimes\D\phi$ with
components $S_{ij}=R_{ij}- \alpha\D_i\phi\D_j\phi$ and let 
$S= R-\alpha\abs{\D\phi}^2$ be its trace. Then, we can write
the $(RH)_\alpha$ flow as
\begin{equation*}
\left\{\begin{aligned} \dt g_{ij} &= -2S_{ij},\\
\dt \phi &= \tau_g\phi,\end{aligned}\right.
\end{equation*}
and the energy from Section 3 as $\sF_\alpha(g,\phi,f) := \int_M \big(S+\abs{\D f}^2_g\big)e^{-f}dV_g$. It is thus convenient to study the 
evolution equations for $\sS$ and
$S$. Indeed, many terms cancel and we get much nicer equations than
in the previous subsection.
\begin{thm}\label{2.thm5}
Let $(g(t),\phi(t))$ solve $(RH)_\alpha$ with $\alpha(t) \equiv
\alpha > 0$. Then $\sS$ and\/ $S$ defined as above satisfy the
following evolution equations
\begin{equation}\label{2.eq16}
\begin{aligned}
\dt S&= \Lap S + 2\abs{S_{ij}}^2+2\alpha \abs{\tau_g\phi}^2,\\
\dt S_{ij}&= \Lap_L S_{ij} + 2\alpha \,\tau_g\phi\D_i\D_j\phi.
\end{aligned}
\end{equation}
\end{thm}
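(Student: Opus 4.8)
The plan is to combine the evolution equations already derived in Propositions~\ref{2.prop3} and \ref{2.prop4} and check that the ``bad'' terms cancel. Since $S=R-\alpha\abs{\D\phi}^2$ and $\alpha$ is constant in time, we simply subtract $\alpha$ times (\ref{2.eq12}) from (\ref{2.eq9}). On the right-hand side we collect: the Laplacian terms give $\Lap R - \alpha\Lap\abs{\D\phi}^2 = \Lap S$; the curvature–energy cross terms $-4\alpha\scal{\Rc{},\D\phi\otimes\D\phi}$ and the two $\NRm{}$ terms (which appear with coefficient $+2\alpha$ in (\ref{2.eq9}) and $-2\alpha\cdot(+2)=-2\alpha\cdot 1$... ) must be tracked carefully; the $-2\alpha\abs{\D^2\phi}^2$ appearing in $\dt R$ cancels against the $+2\alpha\abs{\D^2\phi}^2$ coming from $-\alpha$ times the $-2\abs{\D^2\phi}^2$ in (\ref{2.eq12}); and the target-curvature terms $2\alpha\Scal{\!\NRm{\D_i\phi,\D_j\phi}\D_j\phi,\D_i\phi}$ likewise cancel between the two contributions. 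What remains is $2\abs{\Rc{}}^2 - 4\alpha\scal{\Rc{},\D\phi\otimes\D\phi} + 2\alpha^2\abs{\D\phi\otimes\D\phi}^2 + 2\alpha\abs{\tau_g\phi}^2$, and one recognizes the first three terms as $2\abs{\Rc{}-\alpha\D\phi\otimes\D\phi}^2 = 2\abs{S_{ij}}^2$, using that $\scal{\Rc{},\D\phi\otimes\D\phi}=R_{ij}\D_i\phi^\kappa\D_j\phi^\kappa$ is exactly the inner product of the two symmetric tensors. This yields the first equation in (\ref{2.eq16}).

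For the tensor equation I would proceed identically at the level of symmetric two-tensors, subtracting $\alpha$ times (\ref{2.eq13}) from (\ref{2.eq10}). The Lichnerowicz-Laplacian term is the key bookkeeping point: in (\ref{2.eq10}) the leading term is $\Lap_L R_{ij}$, which by definition equals $\Lap R_{ij} + 2R_{ipjq}R_{pq} - R_{ip}R_{pj} - R_{jp}R_{pi}$, while for $\D_i\phi\D_j\phi$ one has $\Lap_L(\D_i\phi\D_j\phi) = \Lap(\D_i\phi\D_j\phi) + 2R_{ipjq}\D_p\phi\D_q\phi - R_{ip}\D_p\phi\D_j\phi - R_{jp}\D_p\phi\D_i\phi$. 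The Weyl/curvature action $2\alpha R_{pijq}\D_p\phi\D_q\phi$ appearing in (\ref{2.eq10}) together with the Ricci terms $-2R_{iq}R_{jq}+2R_{ipjq}R_{pq}$ should reassemble, after subtracting $\alpha$ times the corresponding structure in (\ref{2.eq13}), into $\Lap_L S_{ij}$; note $R_{pijq}=R_{ipjq}$ up to the symmetry conventions being used, and one must be slightly careful here with index placement. The remaining non-Laplacian terms are the $\abs{\D^2\phi}^2$-type terms $2\alpha\,\tau_g\phi\D_i\D_j\phi - 2\alpha\D_p\D_i\phi\D_p\D_j\phi$ from (\ref{2.eq10}) minus $\alpha$ times $-2\D_p\D_i\phi\D_p\D_j\phi$ from (\ref{2.eq13}), whose $\D^2\phi\cdot\D^2\phi$ parts cancel, leaving precisely $2\alpha\,\tau_g\phi\D_i\D_j\phi$; the two target-curvature terms cancel as before.

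The one genuinely delicate point is verifying that all the $R_{\cdot\cdot\cdot\cdot}\D\phi\D\phi$ terms assemble correctly into the Lichnerowicz Laplacian $\Lap_L(\alpha\D\phi\otimes\D\phi)$, so that $\Lap_L R_{ij} - \Lap_L(\alpha\D_i\phi\D_j\phi) = \Lap_L S_{ij}$ by linearity. This requires matching $2\alpha R_{pijq}\D_p\phi\D_q\phi$ (from (\ref{2.eq10})) against the $2R_{ipjq}$-term in the definition of $\Lap_L$ applied to $\alpha\D_i\phi\D_j\phi$, and checking that the Ricci-type terms $-R_{ip}(\alpha\D_p\phi\D_j\phi)-R_{jp}(\alpha\D_p\phi\D_i\phi)$ in $\Lap_L(\alpha\D\phi\otimes\D\phi)$ are indeed absent from the plain evolution (\ref{2.eq13}) and hence appear with the right sign upon regrouping. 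I expect this to be the main obstacle, and I would resolve it by writing out $\Lap_L$ explicitly on both tensors and comparing term by term, invoking the symmetry $R_{ipjq}=R_{pijq}$ (valid for the curvature tensor with the sign convention of the paper) where needed. Once that identification is made, the first equation follows by taking the trace of the second together with $\dt(g^{ij})=2S^{ij}$, which reproduces the computation in the previous paragraph and provides a consistency check; alternatively one cites \cite[Proposition 2.3]{Muller:diss} for the more detailed version.
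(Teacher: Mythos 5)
Your overall route is the same as the paper's: the proof there is precisely the one-line observation that (\ref{2.eq16}) follows by combining Propositions \ref{2.prop3} and \ref{2.prop4} linearly (using that $\alpha$ is constant), and your scalar computation is correct and complete — the $\abs{\D^2\phi}^2$ and target-curvature terms cancel, and $2\abs{\Rc{}}^2-4\alpha\scal{\Rc{},\D\phi\otimes\D\phi}+2\alpha^2\abs{\D\phi\otimes\D\phi}^2=2\abs{S_{ij}}^2$ completes the square.

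The tensor part, however, would not close up as you describe it, for two concrete reasons. First, the identity you invoke has the wrong sign: the Riemann tensor is antisymmetric in its first two indices, so $R_{pijq}=-R_{ipjq}$, never $R_{pijq}=R_{ipjq}$. This minus sign is exactly what makes the term $+2\alpha R_{pijq}\D_p\phi\D_q\phi$ in (\ref{2.eq10}) equal to $-\alpha$ times the piece $2R_{ipjq}\D_p\phi\D_q\phi$ of $\Lap_L(\D_i\phi\D_j\phi)$; with the symmetry as you state it, a residue $4\alpha R_{ipjq}\D_p\phi\D_q\phi$ would survive. Second, the terms $-2R_{iq}R_{jq}+2R_{ipjq}R_{pq}$ cannot be ``reassembled into $\Lap_L S_{ij}$'': by the definition $\Lap_L t_{ij}=\Lap t_{ij}+2R_{ipjq}t_{pq}-R_{ip}t_{pj}-R_{jp}t_{pi}$ they are exactly the curvature corrections already contained in $\Lap_L R_{ij}$, so if one takes the display (\ref{2.eq10}) at face value (leading term $\Lap_L R_{ij}$ \emph{and} these two terms), they survive the subtraction of $\alpha$ times (\ref{2.eq13}) and spoil (\ref{2.eq16}); they only cancel after tracing, which is why your scalar consistency check does not detect the problem. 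The proof of Proposition \ref{2.prop3} shows the intended reading: the Ricci part contributes $\Lap_L R_{ij}$ and the $\alpha$-part contributes $2\alpha\,\tau_g\phi\D_i\D_j\phi-2\alpha\D_p\D_i\phi\D_p\D_j\phi-2\alpha R_{ipjq}\D_p\phi\D_q\phi+2\alpha\Scal{\!\NRm{\D_i\phi,\D_p\phi}\D_p\phi,\D_j\phi}$, i.e.~the leading term in (\ref{2.eq10}) should be understood as the rough Laplacian $\Lap R_{ij}$ (the printed formula double-counts the correction terms). With that reconciliation, subtracting $\alpha$ times (\ref{2.eq13}) immediately gives $\Lap_L R_{ij}-\alpha\Lap_L(\D_i\phi\D_j\phi)+2\alpha\,\tau_g\phi\D_i\D_j\phi=\Lap_L S_{ij}+2\alpha\,\tau_g\phi\D_i\D_j\phi$, which is the paper's argument; your plan needs the sign correction and this reconciliation before the term-by-term comparison you defer to will actually succeed.
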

\begin{proof}
This follows directly by combining the evolution equations from
Proposition \ref{2.prop3} with those from Proposition \ref{2.prop4}.
\end{proof}
\begin{rem}
Note that in contrast to the evolution of $\Rc{}$, $R$,
$\D\phi\otimes \D\phi$ and $\abs{\D\phi}^2$ the evolution equations
in Theorem \ref{2.thm5} for the combinations
$\Rc{}-\alpha\D\phi\otimes\D\phi$ and $R-\alpha\abs{\D\phi}^2$ do
\emph{not} depend on the intrinsic curvature of $N$.
\end{rem}
\begin{cor}\label{2.cor6}
For a solution $(g(t),\phi(t))$ of\/ $(RH)_\alpha$ with a
time-dependent coupling function $\alpha(t)$, we get
\begin{equation}\label{2.eq17}
\begin{aligned}
\dt S&= \Lap S + 2\abs{S_{ij}}^2+2\alpha \abs{\tau_g\phi}^2
-\dot{\alpha}\abs{\D\phi}^2,\\
\dt S_{ij}&= \Lap_L S_{ij} + 2\alpha \,\tau_g\phi\D_i\D_j\phi
-\dot{\alpha}\D_i\phi\D_j\phi.
\end{aligned}
\end{equation}
\end{cor}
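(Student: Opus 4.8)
The plan is to obtain (\ref{2.eq17}) as a bookkeeping variant of Theorem~\ref{2.thm5}. The key observation is that the evolution equations (\ref{2.eq9}), (\ref{2.eq10}), (\ref{2.eq12}) and (\ref{2.eq13}) from Propositions~\ref{2.prop3} and~\ref{2.prop4} are pointwise-in-time identities that use only the right-hand side of $(RH)_\alpha$ evaluated at the given time $t$; they therefore continue to hold verbatim when $\alpha=\alpha(t)$ is time-dependent, with $\alpha$ denoting its instantaneous value. Consequently the only new contribution, compared to the constant-$\alpha$ case of Theorem~\ref{2.thm5}, comes from differentiating the explicit factor $\alpha(t)$ in the definitions $S_{ij}=R_{ij}-\alpha\D_i\phi\D_j\phi$ and $S=R-\alpha\abs{\D\phi}^2$.

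Concretely, I would write
\begin{equation*}
\dt S_{ij}=\dt R_{ij}-\dot{\alpha}\,\D_i\phi\D_j\phi-\alpha\,\dt(\D_i\phi\D_j\phi),
\end{equation*}
substitute (\ref{2.eq10}) for $\dt R_{ij}$ and (\ref{2.eq13}) for $\dt(\D_i\phi\D_j\phi)$, and note that the sum of all the terms \emph{not} involving $\dot{\alpha}$ is, by exactly the cancellations already carried out in the proof of Theorem~\ref{2.thm5}, equal to $\Lap_L S_{ij}+2\alpha\,\tau_g\phi\D_i\D_j\phi$. What remains is the single extra term $-\dot{\alpha}\,\D_i\phi\D_j\phi$, and this gives the second equation in (\ref{2.eq17}). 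Likewise, starting from
\begin{equation*}
\dt S=\dt R-\dot{\alpha}\,\abs{\D\phi}^2-\alpha\,\dt\abs{\D\phi}^2,
\end{equation*}
inserting (\ref{2.eq9}) and (\ref{2.eq12}), the part free of $\dot{\alpha}$ reassembles into $\Lap S+2\abs{S_{ij}}^2+2\alpha\abs{\tau_g\phi}^2$ precisely as in Theorem~\ref{2.thm5}, and the leftover term $-\dot{\alpha}\abs{\D\phi}^2$ yields the first equation in (\ref{2.eq17}).

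I do not expect a genuine obstacle here; the corollary is a routine extension of Theorem~\ref{2.thm5}. The one point I would state with a little care is the claim that Propositions~\ref{2.prop3} and~\ref{2.prop4} are insensitive to the time-dependence of $\alpha$ — but this is immediate, since their derivations invoke nothing about how $\alpha$ varies, only the flow equations $\dt g_{ij}=-2R_{ij}+2\alpha\D_i\phi\D_j\phi$ and $\dt\phi=\tau_g\phi$ at a fixed time. Should one prefer to avoid even this remark, an equivalent route is to recompute $\dt S$ and $\dt S_{ij}$ from scratch along the lines of Propositions~\ref{2.prop3} and~\ref{2.prop4}, carrying $\alpha(t)$ through all differentiations and collecting the $\dot{\alpha}$-terms at the end; the reduction above is simply the shorter way.
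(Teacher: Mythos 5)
Your proposal is correct and follows essentially the same route as the paper, which states the corollary as an immediate consequence of Theorem \ref{2.thm5} (equivalently, of Propositions \ref{2.prop3} and \ref{2.prop4}, whose derivations use only the instantaneous value of $\alpha$) together with the product-rule terms $-\dot{\alpha}\D_i\phi\D_j\phi$ and $-\dot{\alpha}\abs{\D\phi}^2$ coming from the explicit factor $\alpha(t)$ in $S_{ij}$ and $S$. Your remark that the earlier evolution equations are insensitive to the time-dependence of $\alpha$ is exactly the right justification and needs no further elaboration.
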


\section{First results about singularities}%
In this section, we often use the weak maximum principle which
states that for parabolic partial differential equations with a
reaction term a solution of the corresponding ODE yields 
pointwise bounds for the solutions of the PDE. Since
we work on an evolving manifold, we need a slightly generalized
version. The following result is proved in \cite[Theorem
4.4]{RF:intro}.
\begin{prop}\label{app.prop1}
Let $u:M\times[0,T)\to \RR$ be a smooth function satisfying
\begin{equation}\label{app.eq15}
\dt u \geq \Lap_{g(t)}u + \scal{X(t),\D u}_{g(t)}+F(u),
\end{equation}
where $g(t)$ is a smooth 1-parameter family of metrics on $M$,
$X(t)$ a smooth 1-parameter family of vector fields on $M$, and
$F:\RR\to\RR$ is a locally Lipschitz function. Suppose that
$u(\cdot,0)$ is bounded below by a constant $C_0\in\RR$ and let
$\phi(t)$ be a solution to
\begin{equation*}
\dt \phi = F(\phi), \quad \phi(0)=C_0.
\end{equation*}
Then $u(x,t)\geq \phi(t)$ for all $x\in M$ and all $t\in[0,T)$ for
which $\phi(t)$ exists.
\end{prop}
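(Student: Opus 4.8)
The plan is to compare $u$ with a slightly perturbed version of the ODE solution and derive a contradiction at a first contact point. First I would fix, for each small $\eps>0$, the solution $\phi_\eps$ of $\dt\phi_\eps = F(\phi_\eps)-\eps$ with $\phi_\eps(0)=C_0-\eps$; since $F$ is locally Lipschitz this is well-posed, and by continuous dependence of ODE solutions on the initial condition and on parameters, for any $t_1$ strictly below the supremum of the maximal existence interval of $\phi$ there is $\eps_0>0$ such that for all $\eps\in(0,\eps_0)$ the function $\phi_\eps$ exists on $[0,t_1]$ and $\phi_\eps\to\phi$ uniformly there as $\eps\to 0$. It thus suffices to show $u(x,t)>\phi_\eps(t)$ on $M\times[0,t_1]$ for every such $\eps$, and then to pass to the limit $\eps\to 0$ and let $t_1$ increase to the existence time of $\phi$.

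Fixing $\eps$ and $t_1$, set $v_\eps:=u-\phi_\eps$ on $M\times[0,t_1]$. At $t=0$ we have $v_\eps(\cdot,0)=u(\cdot,0)-(C_0-\eps)\geq\eps>0$. Because $M$ is closed and $v_\eps$ is smooth, $m(t):=\min_{x\in M}v_\eps(x,t)$ is continuous with $m(0)\geq\eps$. Suppose for contradiction that $m(t_0)=0$ for some (necessarily positive) first time $t_0\in(0,t_1]$, attained at a point $x_0\in M$. Then $x_0$ is a spatial minimum of $v_\eps(\cdot,t_0)$, so $\D v_\eps(x_0,t_0)=0$ and $\Lap_{g(t_0)}v_\eps(x_0,t_0)\geq 0$ --- here the evolving nature of the metric is irrelevant, since only the sign of the Laplacian of a function at its spatial minimum is used. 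Moreover, $v_\eps(x_0,t)\geq m(t)>0=v_\eps(x_0,t_0)$ for $t<t_0$ forces $\dt v_\eps(x_0,t_0)\leq 0$.

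On the other hand, subtracting $\dt\phi_\eps=F(\phi_\eps)-\eps$ from the hypothesis (\ref{app.eq15}) yields, at every point of $M\times[0,t_1]$,
\[
\dt v_\eps \;\geq\; \Lap_{g(t)}v_\eps + \scal{X(t),\D v_\eps}_{g(t)} + \big(F(u)-F(\phi_\eps)\big) + \eps.
\]
Evaluating at $(x_0,t_0)$, where $u=\phi_\eps$ and hence $F(u)-F(\phi_\eps)=0$, and where $\D v_\eps=0$ and $\Lap_{g(t_0)}v_\eps\geq 0$, gives $\dt v_\eps(x_0,t_0)\geq\eps>0$, contradicting $\dt v_\eps(x_0,t_0)\leq 0$. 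Therefore $v_\eps>0$ on all of $M\times[0,t_1]$, and letting $\eps\to 0$ gives $u\geq\phi$ on $M\times[0,t_1]$; since $t_1$ was arbitrary below the existence time of $\phi$, the proposition follows.

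I expect the only genuinely delicate points to be the two perturbation bookkeeping steps: choosing the \emph{double} perturbation (the $-\eps$ in the equation and the $-\eps$ in the initial value) so that the weak differential inequality becomes strictly violated at the contact point, and ensuring via continuous dependence that $\phi_\eps$ survives on the interval of interest even though its maximal existence time may be slightly shorter than that of $\phi$. The local Lipschitz assumption on $F$ enters only to guarantee well-posedness and continuous dependence for these comparison ODEs; it plays no role in the contact-point computation, where $u$ and $\phi_\eps$ agree exactly.
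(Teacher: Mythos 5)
Your proof is correct, and it is essentially the standard argument: the paper does not prove this proposition itself but cites \cite{RF:intro} (Theorem 4.4), whose proof is the same perturbation-and-first-contact-point comparison you give, with the double $\eps$-perturbation handling the weak inequality and local Lipschitz continuity of $F$ used only for well-posedness and continuous dependence of the comparison ODEs.
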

Similarly, if (\ref{app.eq15}) is replaced by $\dt u \leq \Lap_{g(t)}u 
+\scal{X(t),\D u}_{g(t)}+F(u)$ and $u(\cdot,0)$ is bounded from above 
by $C_0$, then $u(x,t)\leq \phi(t)$ for all $x\in M$ and $t\in[0,T)$ 
for which the solution $\phi(t)$ of the corresponding ODE exists.%
\newline

Using this, an immediate consequence of Corollary \ref{2.cor6} is
the following.
\begin{cor}\label{2.cor7}
Let $(g(t),\phi(t))$ be a solution to the $(RH)_\alpha$ flow with a
nonnegative, non-increasing coupling function $\alpha(t)$. Let
$S(t)=R(g(t))-\alpha(t)\abs{\D\phi(t)}^2_{g(t)}$ as above, with
initial data $S(0)>0$ on $M$. Then $R_{min}(t):=\min_{x\in M}R(x,t)
\to \infty$ in finite time and thus $g(t)$ must become singular in
finite time $T_{sing} \leq \frac{m}{2S_{min}(0)} < \infty$.
\end{cor}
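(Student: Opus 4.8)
The plan is to combine the scalar evolution inequality from Corollary \ref{2.cor6} with the weak maximum principle (Proposition \ref{app.prop1}). First I would observe that, since $\alpha(t)$ is nonnegative and non-increasing, we have $\dot\alpha \leq 0$, so the term $-\dot\alpha\abs{\D\phi}^2$ in the first equation of (\ref{2.eq17}) is nonnegative; together with $2\alpha\abs{\tau_g\phi}^2 \geq 0$ this gives the differential inequality $\dt S \geq \Lap S + 2\abs{S_{ij}}^2$.

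Next I would discard the spatial derivative structure of $S$ but keep the key algebraic fact that, by Cauchy--Schwarz, $\abs{S_{ij}}^2 \geq \tfrac{1}{m}(\tr_g \sS)^2 = \tfrac{1}{m}S^2$. Hence $S$ satisfies $\dt S \geq \Lap S + \tfrac{2}{m}S^2$, which is of the form (\ref{app.eq15}) with $X \equiv 0$ and $F(s) = \tfrac{2}{m}s^2$ (locally Lipschitz). Since $S(0) > 0$ on the closed manifold $M$, we have $S(0) \geq S_{min}(0) =: C_0 > 0$, and the ODE comparison function solving $\dot\varphi = \tfrac{2}{m}\varphi^2$, $\varphi(0) = C_0$, is $\varphi(t) = \big(C_0^{-1} - \tfrac{2}{m}t\big)^{-1}$, which blows up at time $t^* = \tfrac{m}{2C_0} = \tfrac{m}{2S_{min}(0)}$. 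By the maximum principle $S(x,t) \geq \varphi(t)$ wherever $\varphi$ exists, so $S_{min}(t) \to \infty$ as $t \nearrow t^*$.

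Finally I would deduce the claim about $R$: since $R = S + \alpha\abs{\D\phi}^2 \geq S$ pointwise (as $\alpha \geq 0$), we get $R_{min}(t) \geq S_{min}(t) \to \infty$, so the flow cannot be extended past $t^*$ — otherwise $R$, and hence the curvature, would stay bounded on a compact time interval, contradicting long-time existence criteria. Therefore $T_{sing} \leq t^* = \tfrac{m}{2S_{min}(0)} < \infty$.

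I do not anticipate a serious obstacle here; the only point requiring mild care is justifying that the hypotheses of Proposition \ref{app.prop1} are genuinely met — in particular that $S$ is smooth up to the (possibly shorter) existence time of the flow and that dropping $2\alpha\abs{\tau_g\phi}^2$ and $-\dot\alpha\abs{\D\phi}^2$ is legitimate (both are manifestly $\geq 0$ under the stated sign conditions on $\alpha$). The Cauchy--Schwarz step $\abs{S_{ij}}^2 \geq \tfrac{1}{m}S^2$ is the standard trick used for the Ricci flow scalar curvature and transfers verbatim since $\sS$ is a symmetric two-tensor.
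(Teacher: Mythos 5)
Your proposal is correct and follows essentially the same route as the paper: drop the nonnegative terms $2\alpha\abs{\tau_g\phi}^2$ and $-\dot\alpha\abs{\D\phi}^2$ from (\ref{2.eq17}), use $\abs{S_{ij}}^2\geq\tfrac1m S^2$, compare with the ODE $\dot a=\tfrac2m a^2$ via the maximum principle, and conclude from $R\geq S$. The only cosmetic difference is that your comparison function is written as $\bigl(C_0^{-1}-\tfrac{2}{m}t\bigr)^{-1}$ rather than the paper's equivalent form $S_{min}(0)\big/\bigl(1-\tfrac{2t}{m}S_{min}(0)\bigr)$.
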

\begin{proof}
Since $\alpha(t) \geq 0$ and $\dot{\alpha}(t) \leq 0$ for all $t\geq
0$, Corollary \ref{2.cor6} yields
\begin{equation}\label{2.eq18}
\dt S \geq \Lap S +2\abs{S_{ij}}^2 \geq \Lap S + \tfrac{2}{m}S^2
\end{equation}
and thus by comparing with solutions of the ODE $\tfrac{d}{dt} a(t)
= \frac{2}{m}a(t)^2$ which are
\begin{equation*}
a(t) = \frac{a(0)}{1-\tfrac{2t}{m}a(0)},
\end{equation*}
the maximum principle above, yields
\begin{equation}\label{2.eq19}
S_{min}(t) \geq \frac{S_{min}(0)}{1-\tfrac{2t}{m}S_{min}(0)}
\end{equation}
for all $t \geq 0$ as long as the flow exists. In particular, if
$S_{min}(0)>0$ this implies that $S_{min}(t)\to\infty$ in finite
time $T_0 \leq \frac{m}{2S_{min}(0)}<\infty$. Since
$R=S+\alpha\abs{\D\phi}^2\geq S$, we find that also $R_{min}(t)
\to\infty$ before $T_0$ and thus $g(t)$ has to become singular in
finite time $T_{sing}\leq T_0 \leq \frac{m}{2S_{min}(0)}<\infty$.
\end{proof}
As a second consequence, we see that if the energy density $e(\phi)
= \frac{1}{2}\abs{\D\phi}^2$ blows up at some point in space-time
while $\alpha(t)$ is bounded away from zero, then also $g(t)$ must
become singular at this point.
\begin{cor}\label{2.cor8}
Let $(g(t),\phi(t))_{t\in[0,T)}$ be a smooth solution of\/
$(RH)_\alpha$ with a non-increasing coupling function $\alpha(t)$
satisfying $\alpha(t)\geq \underaccent{\bar}\alpha >0$ for all
$t\in[0,T)$. Suppose that $\abs{\D\phi}^2(x_k,t_k) \to \infty$ for a
sequence $(x_k,t_k)_{k\in\mathbb{N}}$ with $t_k \nearrow T$. Then
also $R(x_k,t_k)\to \infty$ for this sequence and thus $g(t_k)$ must
become singular as $t_k$ approaches $T$.
\end{cor}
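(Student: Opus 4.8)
The plan is to exploit the clean evolution of the trace quantity $S = R - \alpha\abs{\D\phi}^2$ established in Theorem \ref{2.thm5} and Corollary \ref{2.cor6}. The point is that, unlike $R$ or $\abs{\D\phi}^2$ individually, $S$ satisfies a reaction--diffusion inequality whose reaction term has a favourable sign (and which, incidentally, does not involve the curvature of $N$). From a uniform-in-time lower bound for $S$ one recovers a lower bound for $R$ in terms of $\abs{\D\phi}^2$, because $R = S + \alpha\abs{\D\phi}^2$ and $\alpha(t)\geq\underaccent{\bar}\alpha>0$; then blow-up of $\abs{\D\phi}^2$ along $(x_k,t_k)$ forces blow-up of $R$.

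First I would observe that under the hypotheses $\alpha(t)\geq 0$ and $\dot\alpha(t)\leq 0$, Corollary \ref{2.cor6} gives $\dt S = \Lap S + 2\abs{S_{ij}}^2 + 2\alpha\abs{\tau_g\phi}^2 - \dot\alpha\abs{\D\phi}^2 \geq \Lap S$, since each of the three reaction terms is nonnegative. Applying the weak maximum principle (Proposition \ref{app.prop1}) with vanishing reaction function and with $C_0 := \min_{x\in M} S(x,0)$, which is finite because $M$ is closed and the data smooth, yields $S(x,t)\geq C_0$ on $M\times[0,T)$. Note that one does not need the sharper comparison (\ref{2.eq19}) nor the sign assumption $S(0)>0$ used in Corollary \ref{2.cor7}: a constant lower bound suffices here.

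Then I would conclude: for every $(x,t)$ one has $R(x,t) = S(x,t) + \alpha(t)\abs{\D\phi}^2(x,t) \geq C_0 + \underaccent{\bar}\alpha\,\abs{\D\phi}^2(x,t)$, using $\alpha(t)\geq\underaccent{\bar}\alpha$ and $\abs{\D\phi}^2\geq 0$; evaluating along the given sequence and using $\abs{\D\phi}^2(x_k,t_k)\to\infty$ gives $R(x_k,t_k)\to\infty$, and since $\abs{R}\leq c(m)\abs{\Rm{}}$ for a dimensional constant $c(m)$, also $\abs{\Rm{}}(x_k,t_k)\to\infty$, so $g(t_k)$ must become singular as $t_k\nearrow T$. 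There is no real difficulty in the argument; the only thing one must be careful about is that it is the \emph{strict} positive lower bound $\underaccent{\bar}\alpha>0$, not merely $\alpha\geq 0$, that converts the energy blow-up into curvature blow-up --- which is precisely why the hypothesis is stated in that form.
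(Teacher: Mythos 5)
Your proposal is correct and follows essentially the same route as the paper: a maximum-principle lower bound $S\geq S_{\min}(0)$ for $S=R-\alpha\abs{\D\phi}^2$ (the paper gets it from the inequality $\dt S\geq\Lap S+\tfrac{2}{m}S^2$ of (\ref{2.eq18}), you by dropping the nonnegative reaction terms, which is the same estimate), followed by $R=S+\alpha\abs{\D\phi}^2\geq S_{\min}(0)+\underaccent{\bar}\alpha\abs{\D\phi}^2$ as in (\ref{2.eq20}). No issues.
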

\begin{proof}
From (\ref{2.eq18}) we obtain $S\geq S_{min}(0)$ and thus
\begin{equation}\label{2.eq20}
R = \alpha\abs{\D\phi}^2 + S \geq
\underaccent{\bar}\alpha\abs{\D\phi}^2 + S_{min}(0), \quad \forall
(x,t)\in M\times[0,T).
\end{equation}
Hence, if $\abs{\D\phi}^2(x_k,t_k)\to\infty$ for a sequence
$(x_k,t_k)_{k\in\mathbb{N}}\subset M\times[0,T)$ with $t_k\nearrow
T$ then also $R(x_k,t_k)\to\infty$ for this sequence and $g(t_k)$
must become singular as $t_k\nearrow T$.
\end{proof}
\begin{rem}
The proof shows that Corollary \ref{2.cor8} stays true if $\alpha(t)
\searrow 0$ as $t \nearrow T$ as long as $\abs{\D\phi}^2(x_k,t_k)
\to \infty$ fast enough such that $\alpha(t_k)\abs{\D\phi}^2
(x_k,t_k) \to \infty$ still holds true.
\end{rem}
\vspace{3mm}%
Now, we derive for $t>0$ an improved version of (\ref{2.eq20}) which
does not depend on the initial data $S(0)$. Using (\ref{2.eq18}) and
the maximum principle, we see that if $S_{min}(0)\geq C\in \RR$ we
obtain
\begin{equation*}
S_{min}(t)\geq \frac{C}{1-\tfrac{2t}{m}C} \longrightarrow -
\frac{m}{2t} \qquad(C\to-\infty)
\end{equation*}
and thus $S(t) \geq -\frac{m}{2t}$ for all $t> 0$ as long as the
flow exists, independent of $S(0)$. More rigorously, this is
obtained as follows. The inequality (\ref{2.eq18}) implies
\begin{equation*}
\dt (tS) = S + t\big(\dt S\big) \geq \Lap(tS) +
S\big(1+\tfrac{2t}{m}S\big).
\end{equation*}
If $(x_0,t_0)$ is a point where $tS$ first reaches its minimum over
$M\times[0,T-\delta]$, $\delta>0$ arbitrarily small, we get
$S(x_0,t_0)\big(1+\tfrac{2t_0}{m}S(x_0,t_0))\leq 0$, which is only
possible for $t_0S(x_0,t_0)\geq -\tfrac{m}{2}$. Hence
$tS\geq-\frac{m}{2}$ on all of $M\times[0,T-\delta]$. Since $\delta$
was arbitrary, we obtain the desired inequality $S(t) \geq
-\frac{m}{2t}$ everywhere on $M\times(0,T)$. This yields
\begin{equation*}
R\geq \alpha\abs{\D\phi}^2 -\tfrac{m}{2t} \geq
\underaccent{\bar}\alpha\abs{\D\phi}^2 -\tfrac{m}{2t}, \quad \forall
(x,t)\in M\times(0,T),
\end{equation*}
which immediately implies the following converse of Corollary
\ref{2.cor8}.
\begin{cor}\label{2.cor9}
Let $(g(t),\phi(t))_{t\in[0,T)}$ be a smooth solution of\/
$(RH)_\alpha$ with a non-increasing coupling function $\alpha(t)\geq
\underaccent{\bar}\alpha>0$ for all\/ $t\in[0,T)$. Assume that
$R\leq R_0$ on $M\times[0,T)$. Then
\begin{equation}\label{2.eq21}
\abs{\D\phi}^2 \leq
\frac{R_0}{\underaccent{\bar}\alpha}+\frac{m}{2\underaccent{\bar}\alpha
t}, \quad \forall (x,t)\in M\times(0,T).
\end{equation}
\end{cor}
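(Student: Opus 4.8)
The plan is to first establish the sharp, initial-data-independent lower bound $S(x,t)\ge-\tfrac{m}{2t}$ on $M\times(0,T)$ and then simply read off the desired estimate from the definition $S=R-\alpha\abs{\D\phi}^2$, using the hypotheses $\alpha(t)\ge\underaccent{\bar}\alpha$ and $R\le R_0$. Most of this is already contained in the discussion preceding the statement, so the proof is essentially a matter of assembling those pieces.

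The starting point is Corollary \ref{2.cor6}: since $\alpha(t)\ge 0$ and $\dot\alpha(t)\le 0$, the trace $S$ obeys $\dt S\ge\Lap S+2\abs{S_{ij}}^2$, and by the Cauchy--Schwarz inequality $\abs{S_{ij}}^2\ge\tfrac1m S^2$ this gives $\dt S\ge\Lap S+\tfrac2m S^2$, i.e.~inequality (\ref{2.eq18}). Multiplying through by $t$ and adding $S$ yields the differential inequality $\dt(tS)\ge\Lap(tS)+S\bigl(1+\tfrac{2t}{m}S\bigr)$ on $M\times(0,T)$.

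The key step is then a maximum-principle argument applied to the function $tS$, and this is the only point that requires a little care because $tS$ degenerates at $t=0$ (so one cannot simply quote Proposition \ref{app.prop1} with an initial bound, which is precisely what we want to avoid). Fix $\delta>0$; since $M$ is closed, $tS$ attains its minimum over the compact set $M\times[0,T-\delta]$ at some point $(x_0,t_0)$, which we choose with $t_0$ as small as possible. If $t_0=0$ the minimum value is $0\ge-\tfrac m2$. If $t_0>0$, then at $(x_0,t_0)$ one has $\D(tS)=0$, $\Lap(tS)\ge0$ and $\dt(tS)\le0$, so the differential inequality forces $S(x_0,t_0)\bigl(1+\tfrac{2t_0}{m}S(x_0,t_0)\bigr)\le 0$; as this quadratic in $S$ is non-positive only for $S\in[-\tfrac{m}{2t_0},0]$, we conclude $t_0S(x_0,t_0)\ge-\tfrac m2$. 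In either case $tS\ge-\tfrac m2$ on all of $M\times[0,T-\delta]$, and letting $\delta\downarrow0$ gives $S(x,t)\ge-\tfrac{m}{2t}$ throughout $M\times(0,T)$.

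Finally, substituting $S=R-\alpha\abs{\D\phi}^2$ into this bound and using $\alpha\abs{\D\phi}^2\ge\underaccent{\bar}\alpha\abs{\D\phi}^2$ together with $R\le R_0$ yields $\underaccent{\bar}\alpha\abs{\D\phi}^2\le R+\tfrac{m}{2t}\le R_0+\tfrac{m}{2t}$ on $M\times(0,T)$, and dividing by $\underaccent{\bar}\alpha>0$ gives exactly (\ref{2.eq21}). I expect no genuine obstacle here; the main subtlety, already flagged, is the rigorous handling of the maximum principle for the degenerate quantity $tS$ near $t=0$, dealt with by the ``minimal $t_0$'' device above.
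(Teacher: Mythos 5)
Your proof is correct and follows essentially the same route as the paper: the paper also derives $\dt(tS)\geq\Lap(tS)+S\bigl(1+\tfrac{2t}{m}S\bigr)$ from (\ref{2.eq18}) and applies the maximum principle at the first minimum point of $tS$ over $M\times[0,T-\delta]$ to get $tS\geq-\tfrac{m}{2}$, hence $S\geq-\tfrac{m}{2t}$, and then reads off (\ref{2.eq21}) exactly as you do. Your handling of the degeneracy at $t=0$ (the case $t_0=0$ giving minimum value $0$) is the same point the paper addresses with its "more rigorously" argument, so there is nothing to add.
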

Singularities of the type as in Corollary \ref{2.cor8}, where the
energy density of $\phi$ blows up, can not only be ruled out if the
curvature of $M$ stays bounded. There is also a way to rule them out
\emph{a-priori}. Namely, such singularities cannot form if either $N$ has
non-positive sectional curvatures or if we choose the coupling
constants $\alpha(t)$ large enough such that
\begin{equation*}
\max_{y \in N}\!\hN K(y) \leq \frac{\alpha}{m}.
\end{equation*}
Here $\hN K$ denotes the sectional curvature of $N$. More precisely, we
have the following estimates for the energy density
$\abs{\D\phi}^2$.
\begin{prop}\label{2.prop10}
Let $(g(t),\phi(t))_{t\in[0,T)}$ be a solution of\/ $(RH)_\alpha$
with a non-increasing $\alpha(t) \geq 0$ and let
the sectional curvature of\/ $N$ be bounded above by $\!\hN K\leq
c_0$. Then
\begin{itemize}
\item[i)] if $N$ has non-positive sectional curvatures or more
generally if\/ $c_0 - \frac{\alpha(t)}{m} \leq 0$, the energy
density of $\phi$ is bounded by its initial data,
\begin{equation}\label{2.eq22}
\abs{\D\phi(x,t)}^2 \leq \max_{y\in M}\abs{\D\phi(y,0)}^2, \quad
\forall (x,t)\in M\times[0,T).
\end{equation}
\item[ii)] if $N$ has non-positive sectional curvatures and
$\alpha(t)\geq \underaccent{\bar}\alpha>0$, we have in addition to
(\ref{2.eq22}) the estimate
\begin{equation}\label{2.eq23}
\abs{\D\phi(x,t)}^2 \leq \frac{m}{2\underaccent{\bar}\alpha t},
\quad \forall (x,t)\in M\times(0,T).
\end{equation}
\item[iii)] in general, the energy density satisfies
\begin{equation}\label{2.eq24}
\abs{\D\phi(x,t)}^2 \leq 2\max_{y\in M}\abs{\D\phi(y,0)}^2, \quad
\forall (x,t)\in M\times[0,T^*),
\end{equation}
where $T^*:= \min\big\{T,\big(4c_0\max_{y\in
M}\abs{\D\phi(y,0)}^2\big)\!\!\phantom{.}^{-1}\big\}$.
\end{itemize}
\end{prop}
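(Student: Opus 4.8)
The plan is to apply the scalar maximum principle (Proposition~\ref{app.prop1}) to the energy density $u:=\abs{\D\phi}^2$, starting from its evolution equation~(\ref{2.eq12}). The crucial step is to control the intrinsic curvature term of $N$. Since $\!\hN K\leq c_0$, for any $X,Y\in T_yN$ the sectional curvature estimate gives $\Scal{\!\NRm{X,Y}Y,X}\leq c_0\big(\abs{X}^2\abs{Y}^2-\scal{X,Y}^2\big)$, the Gram determinant being non-negative; contracting with an orthonormal frame, i.e.~taking $X=\D_i\phi$, $Y=\D_j\phi$ and summing over $i,j$, yields $\Scal{\!\NRm{\D_i\phi,\D_j\phi}\D_j\phi,\D_i\phi}\leq c_0\big(\abs{\D\phi}^4-\abs{\D\phi\otimes\D\phi}^2\big)$. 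I would then combine this with the elementary inequality $\abs{\D\phi\otimes\D\phi}^2\geq\tfrac{1}{m}\abs{\D\phi}^4$ — valid because $\D\phi\otimes\D\phi$ is symmetric positive semi-definite with trace $\abs{\D\phi}^2$ — and discard the non-positive term $-2\abs{\D^2\phi}^2$ in~(\ref{2.eq12}). When $c_0\geq 0$ this gives the differential inequality $\dt u\leq\Lap u+2\big(c_0-\tfrac{\alpha(t)}{m}\big)u^2$; when $c_0\leq 0$ the $N$-curvature contribution is already non-positive, so, using $\alpha(t)\geq 0$, one obtains the cleaner bound $\dt u\leq\Lap u-\tfrac{2\alpha(t)}{m}u^2$.

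The three parts then follow by comparison with solutions of the associated reaction ODE, via Proposition~\ref{app.prop1}. Write $E_0:=\max_{y\in M}\abs{\D\phi(y,0)}^2$; if $E_0=0$ the map is constant and everything is trivial, so assume $E_0>0$. For part~(i), whichever of the cases $c_0\geq 0$ or $c_0\leq 0$ occurs, the hypothesis ($\!\hN K\leq 0$, or more generally $c_0-\tfrac{\alpha(t)}{m}\leq 0$ for all $t$, recalling $\alpha\geq 0$) forces the reaction term in the above inequalities to be $\leq 0$; hence $\dt u\leq\Lap u$, and Proposition~\ref{app.prop1} shows $u$ stays below its initial maximum $E_0$, which is~(\ref{2.eq22}). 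For part~(ii), with $\!\hN K\leq 0$ and $\alpha(t)\geq\underaccent{\bar}\alpha>0$ one has $\dt u\leq\Lap u-\tfrac{2\underaccent{\bar}\alpha}{m}u^2$; comparing with the solution $a(t)=\big(E_0^{-1}+\tfrac{2\underaccent{\bar}\alpha}{m}t\big)^{-1}$ of $\dot a=-\tfrac{2\underaccent{\bar}\alpha}{m}a^2$, $a(0)=E_0$, and using $a(t)\leq\tfrac{m}{2\underaccent{\bar}\alpha t}$, one gets~(\ref{2.eq23}); estimate~(\ref{2.eq22}) is already covered by part~(i). For part~(iii), dropping the favourable term gives $\dt u\leq\Lap u+2c_0u^2$; if $c_0>0$, comparison with $a(t)=E_0(1-2c_0E_0t)^{-1}$ shows $u\leq 2E_0$ as long as $2c_0E_0t\leq\tfrac{1}{2}$, i.e.~on $[0,T^*)$, proving~(\ref{2.eq24}); if $c_0\leq 0$, part~(i) already yields the stronger bound and $T^*=T$.

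The computations are routine, and I expect the only subtle point to be the bookkeeping of the $N$-curvature term: one must keep the sharp estimate $\Scal{\!\NRm{\cdots}}\leq c_0\big(\abs{\D\phi}^4-\abs{\D\phi\otimes\D\phi}^2\big)$ — rather than the cruder $c_0\abs{\D\phi}^4$ — precisely when $c_0\leq 0$, so that the coupling term $-2\alpha\abs{\D\phi\otimes\D\phi}^2$ is not wasted, whereas when $c_0\geq 0$ the crude bound is exactly what produces the clean coefficient $c_0-\tfrac{\alpha}{m}$. The other minor point is, in part~(ii), extracting the initial-data-independent rate $\tfrac{m}{2\underaccent{\bar}\alpha t}$ by comparing with the limiting ODE solution rather than the one starting at $E_0$. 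Everything else is a direct application of the weak maximum principle to~(\ref{2.eq12}).
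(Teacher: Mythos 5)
Your argument is correct and follows essentially the same route as the paper: the evolution equation (\ref{2.eq12}), the upper sectional-curvature bound applied to the $\NRm{}$-term, the Cauchy--Schwarz inequality $\tfrac{1}{m}\abs{\D\phi}^4\le\abs{\D\phi\otimes\D\phi}^2$ from (\ref{2.eq25}), and the maximum principle of Proposition \ref{app.prop1}. The only genuine divergence is in part (ii): the paper runs a maximum-point argument on the quantity $t\abs{\D\phi}^2$, which never refers to the initial data, whereas you compare with the explicit ODE solution $a(t)=\big(E_0^{-1}+\tfrac{2\underaccent{\bar}\alpha}{m}t\big)^{-1}$ of $\dot a=-\tfrac{2\underaccent{\bar}\alpha}{m}a^2$ and then discard $E_0^{-1}$ to get $a(t)\le\tfrac{m}{2\underaccent{\bar}\alpha t}$; both are valid and yield (\ref{2.eq23}). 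Your bookkeeping of the sign of $c_0$ (keeping the Gram-determinant form of the curvature estimate when $c_0\le 0$, and the crude bound $c_0\abs{\D\phi}^4$ only when $c_0\ge 0$) is if anything slightly more careful than the paper's chain of inequalities, and the treatment of parts (i) and (iii), including the degenerate cases $c_0\le 0$ and $E_0=0$, is consistent with the intended reading of $T^*$.
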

\begin{proof}
This is a consequence of the evolution equation (\ref{2.eq12}) and
the Cauchy-Schwarz inequality
\begin{equation}\label{2.eq25}
\tfrac{1}{m}\abs{\D\phi}^4 =
\tfrac{1}{m}\abs{g^{ij}\D_i\phi\D_j\phi}^2 \leq
\abs{\D_i\phi\D_j\phi}^2 \leq \abs{\D\phi}^4.
\end{equation}
\begin{itemize}
\item[i)] If $N$ has non-positive sectional curvatures,
$\Scal{\!\NRm{\D_i\phi,\D_j\phi}\D_j\phi,\D_i\phi} \leq 0$, the
evolution equation (\ref{2.eq12}) implies
\begin{equation}\label{2.eq26}
\dt \abs{\D\phi}^2 \leq \Lap \abs{\D\phi}^2.
\end{equation}
If $c_0 -\frac{\alpha(t)}{m}\leq 0$, we have
$2\Scal{\!\NRm{\D_i\phi,\D_j\phi}\D_j\phi,\D_i\phi} \leq
2c_0\abs{\D\phi}^4\leq 2\tfrac{\alpha}{m}\abs{\D\phi}^4\leq
2\alpha\abs{\D_i\phi\D_j\phi}^2$, and we get again (\ref{2.eq26})
from (\ref{2.eq12}). The claim now follows from the maximum
principle applied to (\ref{2.eq26}).%
\item[ii)] If $\Scal{\!\NRm{\D_i\phi,\D_j\phi}\D_j\phi,\D_i\phi} \leq
0$, (\ref{2.eq12}) and (\ref{2.eq25}) imply
\begin{equation*}
\dt\abs{\D\phi}^2\leq \Lap
\abs{\D\phi}^2-2\alpha\abs{\D_i\phi\D_j\phi}^2\leq
\Lap\abs{\D\phi}^2-2\tfrac{\alpha}{m}\abs{\D\phi}^4.
\end{equation*}
We obtain
\begin{equation*}
\dt \big(t\abs{\D\phi}^2\big)=\abs{\D\phi}^2 +t\big(\dt
\abs{\D\phi}^2\big) \leq \Lap\big(t\abs{\D\phi}^2\big) +
\abs{\D\phi}^2\big(1-2t\tfrac{\alpha}{m}\abs{\D\phi}^2\big).
\end{equation*}
At the first point $(x_0,t_0)$ where $t\abs{\D\phi}^2$ reaches its
maximum over $M\times[0,T-\delta]$, $\delta>0$ arbitrary, we find
$1-2t_0\frac{\alpha}{m}\abs{\D\phi}^2(x_0,t_0)\geq 0$, i.e.
\begin{equation*}
t_0\abs{\D\phi}^2(x_0,t_0) \leq \frac{m}{2\alpha}\leq
\frac{m}{2\underaccent{\bar}\alpha},
\end{equation*}
which implies that
$t\abs{\D\phi}^2\leq\frac{m}{2\underaccent{\bar}\alpha}$ for
every $(x,t)\in M\times[0,T-\delta]$. The claim follows.%
\item[iii)] From (\ref{2.eq12}), we get
\begin{equation*}
\dt \abs{\D\phi}^2 \leq \Lap\abs{\D\phi}^2 + 2c_0\abs{\D\phi}^4.
\end{equation*}
By comparing with solutions of the ODE $\tfrac{d}{dt} a(t) = 2c_0
a(t)^2$, which are
\begin{equation*}
a(t) = \frac{a(0)}{1-2c_0a(0)t}, \quad t \leq \frac{1}{2c_0a(0)},
\end{equation*}
the maximum principle from Proposition \ref{app.prop1} implies
\begin{equation}\label{2.eq27}
\abs{\D\phi(x,t)}^2 \leq \frac{\max_{y\in M}\abs{\D\phi(y,0)}^2}{1
-2c_0\max_{y\in M}\abs{\D\phi(y,0)}^2\, t},
\end{equation}
for all $x\in M$ and $t\leq \min\big\{T,\big(2c_0\max_{y\in
M}\abs{\D\phi(y,0)}^2\big)\!\!\phantom{.}^{-1}\big\}$. In
particular, this proves the doubling-time estimate that we
claimed.\qedhere
\end{itemize}
\end{proof}

\section{Gradient estimates and long-time existence}
For solutions $(g(t),\phi(t))$ of the $(RH)_\alpha$ flow with
non-increasing $\alpha(t)\geq\underaccent{\bar}\alpha>0$, we have
seen in Corollary \ref{2.cor9} that a uniform bound on the curvature
of $(M,g(t))$ implies a uniform bound on $\abs{\D\phi}^2$.
Therefore, one expects that a uniform curvature bound suffices to
show long-time existence for our flow. The proof of this result is
the main goal of this section.

\subsection{Evolution equations for $\mathrm{Rm}$ and $\D^2\phi$}
With $\dt g_{ij} = h_{ij} :=
-2R_{ij}+2\alpha\D_i\phi^\mu\D_j\phi^\mu$, we find the evolution
equation for the Christoffel symbols
\begin{equation}\label{3.eq1}
\begin{aligned}
\dt\Gamma_{ij}^p &=\tfrac{1}{2}g^{pq}(\D_ih_{jq}+\D_jh_{iq}
-\D_qh_{ij})\\%
&=g^{pq}(-\D_iR_{jq}-\D_jR_{iq}+\D_qR_{ij})
+2\alpha\D_i\D_j\phi\D^p\phi
\end{aligned}
\end{equation}
With this, an elementary computation yields the following evolution equation
for the Riemannian curvature tensor (see \cite[Proposition 3.2]{Muller:diss} 
for a detailed proof).
\begin{prop}\label{3.prop2}
Let $(g(t),\phi(t))_{t\in [0,T)}$ be a solution of\/ $(RH)_\alpha$.
Then the Riemann tensor satisfies
\begin{equation}\label{3.eq4}
\begin{aligned}
\dt R_{ijk\ell} &= \D_i\D_kR_{j\ell} -\D_i\D_\ell R_{jk}
-\D_j\D_kR_{i\ell} + \D_j\D_\ell R_{ik}-R_{ijq\ell}R_{kq}
-R_{ijkq}R_{\ell q}\\%
&\quad\,+2\alpha\big(\D_i\D_k\phi\D_j\D_\ell\phi
-\D_i\D_\ell\phi\D_j\D_k\phi -\Scal{\!\NRm{\D_i\phi,\D_j\phi}
\D_k\phi,\D_\ell\phi}\big).
\end{aligned}
\end{equation}
\end{prop}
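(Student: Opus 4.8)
The goal is to compute $\dt R_{ijk\ell}$ for a solution of $(RH)_\alpha$. I would exploit linearity of the variation formula: the general first-variation formula for the Riemann tensor under $\dt g_{ij} = h_{ij}$ is a linear expression in $h$, its first and second covariant derivatives, and the curvature tensor. So I would split $h_{ij} = -2R_{ij} + 2\alpha\D_i\phi\D_j\phi$ into the Ricci part and the $\D\phi\otimes\D\phi$ part, treat each separately, and add the results.

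\textbf{Step 1: The general variation formula.} Recall (e.g.\ from Hamilton's survey or \cite{Muller:diss}) that for $\dt g_{ij} = h_{ij}$ one has
\begin{equation*}
\dt R_{ijk\ell} = \tfrac12\big(\D_i\D_k h_{j\ell} + \D_j\D_\ell h_{ik} - \D_i\D_\ell h_{jk} - \D_j\D_k h_{i\ell}\big) - \tfrac12\big(R_{ijk}^{\phantom{ijk}q}h_{q\ell} + R_{ij\phantom{\ell}}^{\phantom{ij}\ell q}h_{qk}\big),
\end{equation*}
up to index conventions; the key point is that it is a \emph{linear} second-order operator in $h$ plus a curvature-times-$h$ term.

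\textbf{Step 2: The Ricci contribution.} Plugging $h_{ij} = -2R_{ij}$ gives the familiar Ricci-flow evolution of $\Rm{}$. Using the second Bianchi identity and its contractions to rewrite the resulting second derivatives of $\Rc{}$, this produces the terms $\D_i\D_kR_{j\ell} - \D_i\D_\ell R_{jk} - \D_j\D_kR_{i\ell} + \D_j\D_\ell R_{ik}$ together with the quadratic curvature terms; I expect the specific combination $-R_{ijq\ell}R_{kq} - R_{ijkq}R_{\ell q}$ to survive after the usual Ricci-flow bookkeeping (this is exactly the standard computation, so I would cite it rather than redo it).

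\textbf{Step 3: The $\D\phi\otimes\D\phi$ contribution.} Plugging $h_{ij} = 2\alpha\D_i\phi\D_j\phi$ into the variation formula, the second-derivative terms become things like $\D_i\D_k(\D_j\phi\D_\ell\phi)$. Expanding by the product rule produces $\D_i\D_k\phi\,\D_j\D_\ell\phi$ plus terms with a third derivative $\D_i\D_k\D_j\phi$ contracted against $\D_\ell\phi$ (and permutations). The antisymmetrization in $(i,j)$ and $(k,\ell)$ built into the variation formula should kill most of the third-derivative terms, but not all: commuting covariant derivatives on $\phi$ (using the commutator identities from the appendix on $T^*M\otimes\phi^*TN$) converts the surviving third-derivative discrepancies into curvature terms — the intrinsic curvature $\Rm{}$ of $M$ contracted with $\D\phi\otimes\D\phi$, and the pulled-back target curvature $\Scal{\!\NRm{\D_i\phi,\D_j\phi}\D_k\phi,\D_\ell\phi}$. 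The antisymmetric structure then assembles these, together with the curvature-times-$h$ term from Step 1, into precisely $2\alpha(\D_i\D_k\phi\D_j\D_\ell\phi - \D_i\D_\ell\phi\D_j\D_k\phi - \Scal{\!\NRm{\D_i\phi,\D_j\phi}\D_k\phi,\D_\ell\phi})$, with the $M$-curvature cross-terms cancelling against the $h$-times-$\Rm{}$ piece.

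\textbf{Main obstacle.} The delicate part is Step 3: carefully tracking which third-derivative terms cancel under the antisymmetrizations and which ones must be traded for curvature via the commutator formulas, and checking that the $\Rm{}$-times-$\D\phi\otimes\D\phi$ terms coming from two different sources (the $h$-curvature term and the commutators) cancel exactly, leaving only the clean expression above. This is a somewhat lengthy but mechanical index computation; since a detailed version appears in \cite[Proposition 3.2]{Muller:diss}, I would present the structure and key cancellations and refer there for the full bookkeeping.
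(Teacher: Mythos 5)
Your proposal takes essentially the same route as the paper: the paper computes $\dt\Gamma_{ij}^p$ in (\ref{3.eq1}) and then performs exactly the linear-in-$h$ bookkeeping you describe (second derivatives of $h$, curvature-times-$h$ terms, product rule on $\D_i\phi\D_j\phi$, commutators from the appendix turning the surviving third derivatives of $\phi$ into $\Rm{}$- and $\NRm{}$-terms, with the $\Rm{}*(\D\phi)^{*2}$ contributions cancelling), referring to \cite[Proposition 3.2]{Muller:diss} for the full computation. One small correction: no second Bianchi identity is needed to obtain (\ref{3.eq4}) itself --- the second derivatives of $\Rc{}$ appear verbatim from $h=-2\Rc{}$, and Bianchi enters only afterwards, in Hamilton's identity (\ref{3.eq5}) used to pass to the Laplacian form (\ref{3.eq6}).
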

\begin{rem}
Taking the trace of (\ref{3.eq4}), we obtain (\ref{2.eq10}), using
the twice traced second Bianchi identity. This gives
an alternative proof of Proposition \ref{2.prop3}.
\end{rem}
If we set $\alpha = 0$ in (\ref{3.eq4}), we obtain the evolution
equation for the curvature tensor under the Ricci flow. It is
well-known that this evolution equation can be written in a nicer
form, in which its parabolic nature is more apparent. In \cite[Lemma
7.2]{Hamilton:3folds}, Hamilton proved
\begin{equation}\label{3.eq5}
\begin{aligned}
\D_i\D_kR_{j\ell}-\D_i&\D_\ell R_{jk}-\D_j\D_kR_{i\ell}+\D_j\D_\ell
R_{ik}\\%
&= \Lap R_{ijk\ell}+2(B_{ijk\ell}-B_{ij\ell k}-B_{i\ell jk}
+B_{ikj\ell}) -R_{pjk\ell}R_{pi}-R_{ipk\ell}R_{pj},
\end{aligned}
\end{equation}
where $B_{ijk\ell} := R_{ipjq}R_{kp\ell q}$. Plugging this into
(\ref{3.eq4}) yields the following corollary.
\begin{cor}\label{3.cor3}
Along the $(RH)_\alpha$ flow, the Riemannian curvature tensor
evolves by
\begin{equation}\label{3.eq6}
\begin{aligned}
\dt R_{ijk\ell} &= \Lap R_{ijk\ell} + 2(B_{ijk\ell}-B_{ij\ell
k}-B_{i\ell jk} +B_{ikj\ell})\\%
&\quad -(R_{pjk\ell}R_{pi}+R_{ipk\ell}R_{pj}
+R_{ijp\ell}R_{pk}+R_{ijkp}R_{p\ell})\\%
&\quad +2\alpha\big(\D_i\D_k\phi\D_j\D_\ell\phi-
\D_i\D_\ell\phi\D_j\D_k\phi -\Scal{\!\NRm{\D_i\phi,\D_j\phi}
\D_k\phi,\D_\ell\phi}\big).
\end{aligned}
\end{equation}
\end{cor}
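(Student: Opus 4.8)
The plan is to obtain (3.6) from (3.4) by a single substitution, exactly as announced in the sentence preceding the statement. First I would take the evolution equation (3.4) of Proposition \ref{3.prop2}, which reads schematically
\[
\dt R_{ijk\ell} = \underbrace{\D_i\D_kR_{j\ell} - \D_i\D_\ell R_{jk} - \D_j\D_kR_{i\ell} + \D_j\D_\ell R_{ik}}_{(\ast)} - R_{ijq\ell}R_{kq} - R_{ijkq}R_{\ell q} + 2\alpha\,(\cdots),
\]
where the $\alpha$-block $\D_i\D_k\phi\D_j\D_\ell\phi - \D_i\D_\ell\phi\D_j\D_k\phi - \Scal{\!\NRm{\D_i\phi,\D_j\phi}\D_k\phi,\D_\ell\phi}$ is carried over verbatim into (3.6). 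Next I would apply Hamilton's identity (3.5) (see \cite[Lemma 7.2]{Hamilton:3folds}) to rewrite $(\ast)$ as $\Lap R_{ijk\ell} + 2(B_{ijk\ell} - B_{ij\ell k} - B_{i\ell jk} + B_{ikj\ell}) - R_{pjk\ell}R_{pi} - R_{ipk\ell}R_{pj}$, with $B_{ijk\ell} = R_{ipjq}R_{kp\ell q}$.

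It then remains only to collect the quadratic-in-curvature terms. The two Ricci contractions $-R_{pjk\ell}R_{pi} - R_{ipk\ell}R_{pj}$ coming from (3.5), together with the two contractions $-R_{ijq\ell}R_{kq} - R_{ijkq}R_{\ell q}$ already present in (3.4) (relabelling the summation index $q\to p$), combine into the symmetric expression $-(R_{pjk\ell}R_{pi} + R_{ipk\ell}R_{pj} + R_{ijp\ell}R_{pk} + R_{ijkp}R_{p\ell})$ displayed in (3.6); the Laplacian, the $B$-quartet and the $\alpha$-block are already in the desired form. This yields (3.6) and finishes the proof.

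Since both ingredients — (3.4) from Proposition \ref{3.prop2} and (3.5) from \cite[Lemma 7.2]{Hamilton:3folds} — may be used as given, this corollary involves no genuinely new computation, and the only step is index bookkeeping; accordingly there is no real obstacle in the statement itself. Any care that is required lies upstream, in establishing (3.4) via the variation formula (3.1) for the Christoffel symbols together with the commutator identities collected in the appendix, but that is precisely what Proposition \ref{3.prop2} (cf.~\cite[Proposition 3.2]{Muller:diss}) already provides.
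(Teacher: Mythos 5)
Your proposal is correct and is exactly the paper's argument: substitute Hamilton's identity (\ref{3.eq5}) into the evolution equation (\ref{3.eq4}) of Proposition \ref{3.prop2}, then relabel $q\to p$ and use the symmetry of the Ricci tensor to collect the four quadratic contraction terms. The index bookkeeping you describe matches the displayed formula (\ref{3.eq6}), so nothing further is needed.
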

There is a useful convention for writing such equations in a short
form.
\begin{defn}\label{3.defn1}
For two quantities $A$ and $B$, we denote by $A*B$ any quantity
obtained from $A \otimes B$ by summation over pairs of matching
(Latin and Greek) indices, contractions with the metrics $g$ and
$\gamma$ and their inverses, and multiplication with constants
depending only on $m=\dim M$, $n=\dim N$ and the ranks of $A$ and
$B$. We also write $(A)^{*1}:=1*A$, $(A)^{*2}=A*A$, etc.
\end{defn}
This notation allows us to write (\ref{3.eq1}) and (\ref{3.eq6}) in
the short forms
\begin{equation}\label{3.eq3}
\dt \Gamma = (\D\Rm{})^{*1} + \alpha\D^2\phi*\D\phi.
\end{equation}
and
\begin{equation}\label{3.eq7}
\dt \Rm{} = \Lap \Rm{} + (\Rm{})^{*2} + \alpha (\D^2\phi)^{*2} +
\alpha \NRm{}*(\D\phi)^{*4}.
\end{equation}
It is now easy to compute the evolution of the length of the Riemann
tensor. Together with $\dt g^{-1} = (\Rm{})^{*1}+
\alpha(\D\phi)^{*2}$, the above formula yields
\begin{equation}\label{3.eq8}
\begin{aligned}
\dt \abs{\Rm{}}^2 &= \big(\dt g^{-1}\big)*\Rm{}*\Rm{}
+ 2R_{ijk\ell}\big(\dt R_{ijk\ell}\big)\\%
&= \Lap \abs{\Rm{}}^2 - 2\abs{\D\Rm{}}^2 + (\Rm{})^{*3} +
\alpha(\Rm{})^{*2}*(\D\phi)^{*2}\\%
&\quad + \alpha \Rm{}*(\D^2\phi)^{*2} + \alpha
\Rm{}*\NRm{}*(\D\phi)^{*4}.
\end{aligned}
\end{equation}
\begin{cor}\label{3.cor4}
Along the $(RH)_\alpha$ flow, the Riemannian curvature tensor
satisfies
\begin{equation}\label{3.eq9}
\begin{aligned}
\dt \abs{\Rm{}}^2 &\leq \Lap \abs{\Rm{}}^2 - 2\abs{\D\Rm{}}^2
+C \abs{\Rm{}}^3 + \alpha C \abs{\D\phi}^2\abs{\Rm{}}^2\\%
&\quad + \alpha C \abs{\D^2\phi}^2\abs{\Rm{}} + \alpha Cc_0
\abs{\D\phi}^4\abs{\Rm{}},
\end{aligned}
\end{equation}
for constants $C\geq 0$ depending only on the dimension of $M$ and
$c_0=c_0(N)\geq 0$ depending only on the curvature of $N$. If $N$ is
flat, we can choose $c_0 = 0$.
\end{cor}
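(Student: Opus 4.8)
The plan is to derive \eqref{3.eq9} directly from the Schwarz-type inequality $|\Rm{}|^2 \leq |\Rm{}|^2$ applied termwise to the evolution equation \eqref{3.eq8}, estimating each of the $*$-products by the $C^0$-norms of the factors. First I would recall that by Definition \ref{3.defn1} every term written as $A*B$ is a finite sum of full contractions of $A\otimes B$ with the metrics and their inverses, multiplied by dimensional constants; hence pointwise $|A*B|\leq C(m,n)\,|A|\,|B|$ with $C$ depending only on the dimensions and the ranks of the tensors involved. Applying this to each reaction term in \eqref{3.eq8} gives: $|(\Rm{})^{*3}|\leq C|\Rm{}|^3$; $|\alpha(\Rm{})^{*2}*(\D\phi)^{*2}|\leq \alpha C|\Rm{}|^2|\D\phi|^2$; $|\alpha\Rm{}*(\D^2\phi)^{*2}|\leq \alpha C|\Rm{}|\,|\D^2\phi|^2$; and finally $|\alpha\Rm{}*\NRm{}*(\D\phi)^{*4}|\leq \alpha C|\Rm{}|\,|\NRm{}|\,|\D\phi|^4 \leq \alpha C c_0 |\Rm{}|\,|\D\phi|^4$, where in the last step I use the hypothesis that the sectional curvature of $N$ is bounded and therefore $|\NRm{}|\leq c_0(N)$ along the flow (more precisely, along $\phi(M\times[0,T))$, which is a compact subset of $N$ since $N$ is closed). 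If $N$ is flat then $\NRm{}\equiv 0$, so the last term vanishes and we may take $c_0=0$.

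The only mildly delicate points are (a) checking that the Laplacian term $\Lap|\Rm{}|^2$ and the good negative term $-2|\D\Rm{}|^2$ are produced exactly as in \eqref{3.eq8} — but these come for free from the Bochner formula $\dt|\Rm{}|^2 = \Lap|\Rm{}|^2 - 2|\D\Rm{}|^2 + (\text{reaction})$ already displayed there and require no further work; and (b) absorbing the equality in \eqref{3.eq8} into an inequality, which is immediate since we are only estimating the reaction terms from above in absolute value and keeping the exact parabolic part. Collecting the constants from the various $*$-products into a single $C = C(m)$ (the $n$-dependence of the contraction constants can be absorbed since $n=\dim N$ is fixed, and the $N$-curvature dependence is isolated in $c_0$), we arrive precisely at \eqref{3.eq9}.

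I do not expect a genuine obstacle here: the corollary is a routine consequence of \eqref{3.eq8} together with the compactness of $N$ (to bound $|\NRm{}|$) and the elementary inequality for $*$-products. If anything, the one point worth stating carefully is that the bound $|\NRm{}|\leq c_0$ is used along the image of $\phi$, and that this is legitimate because $N$ is closed; under the weaker hypotheses alluded to elsewhere in the paper one would instead carry $c_0 = \sup_{y\in N}|\NRm{}(y)|$ as a standing assumption.

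\begin{proof}
By Definition \ref{3.defn1}, any quantity of the form $A*B$ satisfies the
pointwise bound $\abs{A*B}\leq C\abs{A}\abs{B}$, where $C$ depends only on
$m=\dim M$, $n=\dim N$ and the ranks of $A$ and $B$. Applying this to each of
the reaction terms on the right-hand side of \eqref{3.eq8}, we obtain
\begin{align*}
\abs{(\Rm{})^{*3}} &\leq C\abs{\Rm{}}^3, &
\abs{\alpha(\Rm{})^{*2}*(\D\phi)^{*2}} &\leq \alpha C\abs{\D\phi}^2\abs{\Rm{}}^2,\\
\abs{\alpha\Rm{}*(\D^2\phi)^{*2}} &\leq \alpha C\abs{\D^2\phi}^2\abs{\Rm{}}, &
\abs{\alpha\Rm{}*\NRm{}*(\D\phi)^{*4}} &\leq \alpha C\abs{\NRm{}}\abs{\D\phi}^4\abs{\Rm{}},
\end{align*}
for a constant $C=C(m,n)$; since $n$ is fixed we regard $C$ as depending only
on $m$. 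Because $M$ is closed and $\phi(t)\in C^\infty(M,N)$, the image
$\phi(M\times[0,T))$ is contained in the compact manifold $N$, on which the
sectional curvature is bounded above by $c_0$; hence $\abs{\NRm{}}\leq c_0(N)$
along the flow. Substituting these four estimates into the identity
\eqref{3.eq8}, while retaining the parabolic part $\Lap\abs{\Rm{}}^2$ and the
good term $-2\abs{\D\Rm{}}^2$ unchanged, yields exactly \eqref{3.eq9}. If $N$ is
flat, then $\NRm{}\equiv 0$, so the last term on the right of \eqref{3.eq8}
vanishes identically and we may take $c_0=0$.
\end{proof}
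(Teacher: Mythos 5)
Your proof is correct and follows essentially the same route as the paper, whose own argument is the one-liner that \eqref{3.eq9} ``follows directly from \eqref{3.eq8} and the fact that $\abs{\NRm{}}$ is bounded on compact $N$''; you have merely made the termwise estimate of the $*$-products explicit. One cosmetic point: justify $\abs{\NRm{}}\leq c_0$ by compactness of $N$ (as you do in your preamble) rather than by an upper sectional curvature bound alone, since an upper bound on $\hN K$ by itself does not control the full norm of $\NRm{}$.
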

\begin{proof}
Follows directly from (\ref{3.eq8}) and the fact that $\abs{\NRm{}}$
is bounded on compact $N$.
\end{proof}
For the evolution equation for the Hessian of $\phi$, it is
important that we do not use the $*$-notation directly. Indeed, we
will see that all the terms containing derivatives of the curvature
of $M$ cancel each other (using the second Bianchi identity), a
phenomenon which cannot be seen when working with the $*$-notation.%
\newline

A short computation using (\ref{app.eq7}) and (\ref{app.eq8}) shows that 
the commutator $[\D_i\D_j,\Lap]\phi^\lambda =
\D_i\D_j\tau_g\phi^\lambda-\Lap\D_i\D_j\phi^\lambda$ is given by
\begin{equation}\label{3.eq10}
\begin{aligned}
{[\D_i\D_j,\Lap]}\phi^\lambda &= \D_kR_{jpik}\D_p\phi^\lambda
+2R_{ikjp}\D_k\D_p\phi^\lambda\\%
&\quad\,-R_{ip}\D_j\D_p\phi^\lambda -\D_iR_{jp}\D_p\phi^\lambda
-R_{jp}\D_i\D_p\phi^\lambda\\%
&\quad\,+\big(\NRm{}*\D^2\phi*(\D\phi)^{*2} +
(\partial\NRm{})*(\D\phi)^{*4}\big)_{ij},
\end{aligned}
\end{equation}
see \cite[equation (3.10)]{Muller:diss} for details. With (\ref{3.eq1}) we continue
\begin{align*}
[\D_i\D_j,\Lap]\phi^\lambda -
\big(\dt\Gamma_{ij}^k\big)\D_k\phi^\lambda
&=\big(\Rm{}*\D^2\phi^\lambda\big)_{ij}
-2\alpha\D_i\D_j\phi\D_k\phi\D_k\phi^\lambda\\%
&\quad\,+\big(\NRm{}*\D^2\phi*(\D\phi)^{*2} +
(\partial\NRm{})*(\D\phi)^{*4}\big)_{ij},
\end{align*}
where we used the second Bianchi identity
$(\D_kR_{jpik}+\D_jR_{ip}-\D_pR_{ij})\D_p\phi^\lambda=0$ to cancel
all terms containing derivatives of the curvature of $(M,g)$. Since
the $\D^2\phi$ live in different bundles for different times, we
work again with the covariant time derivative $\D_t$ (and with 
the interpretation of $\D^2\phi$ as a $2$-linear $TN$-valued map 
along $\tilde{\phi}$), as we already did in Section 4, see appendix for
details. At the base point of coordinates satisfying
(\ref{app.eq13}), we find with (\ref{app.eq14}) and the remark
following it
\begin{equation}\label{3.eq11}
\begin{aligned}
\D_t(\D_i\D_j\phi^\lambda) &=\D_i\D_j\dt\phi^\lambda
-\big(\dt\Gamma_{ij}^k\big)\D_k\phi^\lambda +
\NRm{\dt\phi,\D_i\phi}\D_j\phi^\lambda\\%
&= \Lap\D_i\D_j\phi^\lambda +
(\Rm{}*\D^2\phi^\lambda)_{ij} + \alpha \D_i\D_j\phi*\D\phi*\D\phi^\lambda\\%
&\quad\,+\big(\NRm{}*\D^2\phi*(\D\phi)^{*2} +
(\partial\NRm{})*(\D\phi)^{*4}\big)_{ij}.
\end{aligned}
\end{equation}
With $\Lap\abs{\D^2\phi}^2
=2\Lap(\D_i\D_j\phi^\lambda)\D_i\D_j\phi^\lambda
+2\abs{\D^3\phi}^2$, we finally obtain
\begin{equation}\label{3.eq12}
\begin{aligned}
\dt\abs{\D^2\phi}^2 &=(\dt g^{-1})*(\D^2\phi)^{*2}
+2\D_t(\D_i\D_j\phi^\lambda)\D_i\D_j\phi^\lambda\\%
&= \Rm{}*(\D^2\phi)^{*2}+\alpha(\D\phi)^{*2}*(\D^2\phi)^{*2}
+\Lap\abs{\D^2\phi}^2-2\abs{\D^3\phi}^2\\%
&\quad\,+\NRm{}*(\D^2\phi)^{*2}*(\D\phi)^{*2} +
(\partial\NRm{})*(\D^2\phi)*(\D\phi)^{*4}
\end{aligned}
\end{equation}
Since $\abs{\NRm{}}$ and $\abs{\partial \NRm{}}$ are bounded on
compact manifolds $N$, say by a constant $c_1$, this proves the
following proposition.
\begin{prop}\label{3.prop5}
Let $(g(t),\phi(t))_{t\in[0,T)}$ be a solution of\/ $(RH)_\alpha$.
Then the norm of the Hessian of $\phi$ satisfies the estimate
\begin{equation}\label{3.eq13}
\begin{aligned}
\dt\abs{\D^2\phi}^2 &\leq \Lap\abs{\D^2\phi}^2 -2\abs{\D^3\phi}^2
+C\abs{\Rm{}}\abs{\D^2\phi}^2\\%
&\quad + \alpha C\abs{\D\phi}^2\abs{\D^2\phi}^2
+Cc_1\abs{\D\phi}^4\abs{\D^2\phi} + Cc_1
\abs{\D\phi}^2\abs{\D^2\phi}^2
\end{aligned}
\end{equation}
along the flow for some constants $C=C(m)\geq 0$ and $c_1=c_1(N)\geq
0$ depending on the dimension $m$ of $M$ and the curvature of $N$,
respectively. If $N$ is flat, we may choose $c_1=0$.
\end{prop}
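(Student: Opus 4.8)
The plan is to read the estimate off directly from the exact evolution equation (\ref{3.eq12}) for $\abs{\D^2\phi}^2$, which has already been derived above from the commutator identity (\ref{3.eq10}), the evolution (\ref{3.eq1}) of the Christoffel symbols, the second Bianchi identity (used to cancel every term carrying a derivative of the curvature of $(M,g)$), and the covariant time derivative $\D_t$ that is needed because $\D^2\phi$ lives in a bundle varying with $t$. Since all of the analytically delicate work is contained in that derivation, what remains is only to convert the schematic $*$-terms on the right of (\ref{3.eq12}) into honest pointwise bounds.

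First I would invoke Definition \ref{3.defn1}: once the contractions with $g$, $\gamma$ and their inverses are carried out, any quantity of the form $A*B$ obeys $\abs{A*B}\leq C\abs{A}\abs{B}$ with $C$ depending only on $m$, $n$ and the tensor ranks involved --- this is just Cauchy--Schwarz, applied factor by factor. Applied to the five non-Laplacian, non-$\D^3\phi$ terms of (\ref{3.eq12}) this gives $\abs{\Rm{}*(\D^2\phi)^{*2}}\leq C\abs{\Rm{}}\abs{\D^2\phi}^2$, $\abs{\alpha(\D\phi)^{*2}*(\D^2\phi)^{*2}}\leq \alpha C\abs{\D\phi}^2\abs{\D^2\phi}^2$, $\abs{\NRm{}*(\D^2\phi)^{*2}*(\D\phi)^{*2}}\leq C\abs{\NRm{}}\abs{\D\phi}^2\abs{\D^2\phi}^2$, and $\abs{(\partial\NRm{})*(\D^2\phi)*(\D\phi)^{*4}}\leq C\abs{\partial\NRm{}}\abs{\D\phi}^4\abs{\D^2\phi}$. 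I would then use that $N$ is compact, so $\abs{\NRm{}}\leq c_1$ and $\abs{\partial\NRm{}}\leq c_1$ for some $c_1=c_1(N)\geq 0$, with $c_1=0$ when $N$ is flat. Substituting these last two bounds, leaving the equalities $\Lap\abs{\D^2\phi}^2$ and $-2\abs{\D^3\phi}^2$ from (\ref{3.eq12}) untouched, and collecting terms yields exactly (\ref{3.eq13}) with $C=C(m)$ (its dependence on $n$ and on the ranks being harmless).

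Since the commutator computation and the Bianchi cancellation are already behind us, there is essentially no obstacle left. The only points requiring care are the bookkeeping --- making sure that no term is dropped in passing from the schematic identity (\ref{3.eq12}) to the inequality (\ref{3.eq13}), and that the two occurrences of $\abs{\D\phi}^2\abs{\D^2\phi}^2$ (one carrying a factor $\alpha$, one carrying a factor $c_1$) are kept as separate terms --- and confirming that the constants genuinely depend only on $\dim M$ and $\dim N$, which is immediate from the structure of the $*$-notation.
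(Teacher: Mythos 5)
Your proposal is correct and follows the paper's own argument: the paper likewise reads Proposition \ref{3.prop5} off from the schematic evolution equation (\ref{3.eq12}), bounding the $*$-terms via the constants implicit in Definition \ref{3.defn1} and using compactness of $N$ to bound $\abs{\NRm{}}$ and $\abs{\partial\NRm{}}$ by $c_1$ (with $c_1=0$ for flat $N$). Your bookkeeping of the individual terms, including keeping the $\alpha$-weighted and $c_1$-weighted occurrences of $\abs{\D\phi}^2\abs{\D^2\phi}^2$ separate, matches the paper exactly.
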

\begin{rem}
If we set $\alpha \equiv 0$, Corollary \ref{3.cor4} and Proposition
\ref{3.prop5} yield the formulas for the Ricci-DeTurck flow
$(RH)_0$, in particular (\ref{3.eq9}) reduces to the well-known
evolution inequality
\begin{equation*}
\dt \abs{\Rm{}}^2 \leq \Lap
\abs{\Rm{}}^2-2\abs{\D\Rm{}}^2+C\abs{\Rm{}}^3
\end{equation*}
for the Ricci flow, first derived by Hamilton \cite[Corollary
13.3]{Hamilton:3folds}. Moreover, if $\alpha \equiv 2$ and $N\subseteq\RR$
(and thus $c_0=c_1=0$), the estimates (\ref{3.eq9}) and
(\ref{3.eq13}) reduce to the estimates found by List (cf.
\cite[Lemma 2.15 and 2.16]{List:diss}).
\end{rem}

\subsection{Interior-in-time higher order gradient estimates}
Using the evolution equations for the curvature tensor and the
Hessian of $\phi$, we get evolution equations for higher order
derivatives by induction.
\begin{defn}\label{3.defn6}
To keep the notation short, we define for $k\geq 0$
\begin{equation}\label{3.eq14}
\begin{aligned}
I_k &:=\sum_{i+j=k}\D^i\Rm{}*\D^j\Rm{} +
\alpha\sum_{A_k}(\partial^i\NRm{}+1)*\D^{j_1}\phi*\ldots*\D^{j_\ell}\phi\\%
&\quad\;+\alpha\sum_{B_k}\D^{j_1}\phi*\ldots*
\D^{j_{\ell-1}}\phi*\D^{j_\ell}\Rm{},
\end{aligned}
\end{equation}
where the last two sums are taken over all elements of the index
sets defined by
\begin{align*}
A_k &:= \{(i,j_1,\ldots,j_\ell)\mid 0\leq i\leq k+1,\, 1\leq
j_s\leq k+2\;\forall s \textrm{ and } j_1+\ldots +j_\ell=k+4\},\\%
B_k &:= \{(j_1,\dots,j_\ell)\mid 1\leq j_s< k+2\;\forall s<\ell,\,
0\leq j_\ell\leq k \textrm{ and } j_1+\ldots + j_\ell=k+2\}.
\end{align*}
\end{defn}
\begin{lemma}\label{3.lamma7}
Let $(g(t),\phi(t))_{t\in[0,T)}$ be a solution to the $(RH)_\alpha$
flow. Then for $k\geq 0$, with $I_k$ defined as in (\ref{3.eq14}),
we obtain
\begin{equation}\label{3.eq15}
\dt \D^k \Rm{} = \Lap \D^k\Rm{} + I_k.
\end{equation}
\end{lemma}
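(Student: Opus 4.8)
The plan is to argue by induction on $k$. For the base case $k=0$, equation (\ref{3.eq7}) from Corollary \ref{3.cor3} already gives $\dt\Rm{} = \Lap\Rm{} + (\Rm{})^{*2} + \alpha(\D^2\phi)^{*2} + \alpha\NRm{}*(\D\phi)^{*4}$, and one checks directly that each error term has the schematic form of a summand of $I_0$: the term $(\Rm{})^{*2}$ is the $i=j=0$ contribution to the first sum, while $\alpha(\D^2\phi)^{*2}$ and $\alpha\NRm{}*(\D\phi)^{*4}$ come from the second sum with $i=0$ (using the ``$+1$'' part, respectively the ``$\partial^0\NRm{}$'' part) and index tuples $(j_1,j_2)=(2,2)$, respectively $(j_1,\dots,j_4)=(1,1,1,1)$, both of total weight $4 = 0+4$. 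The sum over $B_0$ simply lists further admissible term types that happen not to occur at lowest order. Here, as throughout, the statement ``$\dt\D^k\Rm{} - \Lap\D^k\Rm{} = I_k$'' is to be read as: the difference is a finite sum of terms, each of one of the types collected in (\ref{3.eq14}).

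For the inductive step, assume (\ref{3.eq15}) holds for some $k\geq 0$ and apply $\D$ to both sides. Two commutators enter. First, since the higher derivatives of $\Rm{}$ and $\phi$ live in time-dependent bundles, $\dt$ and $\D$ do not commute; schematically $[\dt,\D]T = -(\dt\Gamma)*T$, and by (\ref{3.eq3}) one has $\dt\Gamma = (\D\Rm{})^{*1} + \alpha\,\D^2\phi*\D\phi$, so $\dt\D^{k+1}\Rm{} = \D(\dt\D^k\Rm{}) + (\dt\Gamma)*\D^k\Rm{}$. Second, the standard Ricci identity gives $\D\Lap T = \Lap\D T + \Rm{}*\D T + \D\Rm{}*T$, hence $\D\Lap\D^k\Rm{} = \Lap\D^{k+1}\Rm{} + \Rm{}*\D^{k+1}\Rm{} + \D\Rm{}*\D^k\Rm{}$. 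Combining these with $\D(\dt\D^k\Rm{}) = \D\Lap\D^k\Rm{} + \D I_k$ we obtain
\[
\dt\D^{k+1}\Rm{} = \Lap\D^{k+1}\Rm{} + \D I_k + \Rm{}*\D^{k+1}\Rm{} + \D\Rm{}*\D^k\Rm{} + (\dt\Gamma)*\D^k\Rm{},
\]
so it remains to show that everything appearing after $\Lap\D^{k+1}\Rm{}$ is of the form $I_{k+1}$.

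The remaining task is bookkeeping with the $*$-notation. The terms $\Rm{}*\D^{k+1}\Rm{}$ and $\D\Rm{}*\D^k\Rm{}$ are of the type $\D^i\Rm{}*\D^j\Rm{}$ with $i+j=k+1$; the curvature part of $(\dt\Gamma)*\D^k\Rm{}$ is again of this type, while its $\phi$-part $\alpha\,\D^2\phi*\D\phi*\D^k\Rm{}$ is a $B_{k+1}$-term (weights $2+1+k = k+3 = (k+1)+2$, with the $\Rm{}$-index $j_\ell = k \leq k+1$ and the $\phi$-indices $2,1$ both strictly below $k+3$). Finally $\D I_k$ is handled by Leibniz, differentiating each factor of each summand of $I_k$. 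The delicate point — and the real content of the proof — is that the factors $\partial^i\NRm{}$ are evaluated along $\phi$, so differentiating such a factor simultaneously raises $i$ to $i+1$ \emph{and} inserts an extra $\D\phi$; one must verify this stays within $A_{k+1}$, i.e.\ that $i+1\leq(k+1)+1$ (true since $i\leq k+1$ on $A_k$) and that the total $\phi$-weight passes from $k+4$ to $(k+1)+4 = k+5$, the new $\D\phi$ supplying exactly one unit. The analogous checks for the ``$+1$'' part of the $\NRm{}$-factors, for the curvature-curvature products (where $\D$ merely shifts $i\mapsto i+1$ or $j\mapsto j+1$), and in particular for the preservation of the strict inequality $j_s < k+2$ in the definition of $B_k$, are all routine but must be carried out term by term; this combinatorial verification that $\D$ sends the term types indexed by $A_k$ and $B_k$ into those indexed by $A_{k+1}$ and $B_{k+1}$ is the main obstacle.
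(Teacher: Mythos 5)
Your argument is correct and is essentially the paper's own proof: induction on $k$ with base case (\ref{3.eq7}), the inductive step obtained by differentiating, handling the two commutators $[\dt,\D]$ (via $\dt\Gamma$ and (\ref{3.eq3})) and $[\D,\Lap]$ (via (\ref{app.eq7})), and then checking that $\D I_k$, $\Rm{}*\D^{k+1}\Rm{}$, $\D\Rm{}*\D^k\Rm{}$ and $\dt\Gamma*\D^k\Rm{}$ are all of the form $I_{k+1}$. The only difference is presentational: you spell out the combinatorial verification that $A_k$, $B_k$ map into $A_{k+1}$, $B_{k+1}$ (including the chain-rule effect on $\partial^i\NRm{}\circ\phi$), which the paper asserts without detail.
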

\begin{rem}
If $(N,\gamma)=(\RR,\delta)$, all terms in $I_k$ containing
$\partial^i\NRm{}$ vanish, and the result reduces to (a slightly
weaker version of) List's result \cite[Lemma 2.19]{List:diss}. Note
that we do not need all the elements of $A_k$ here, but defining
$A_k$ this way allows us to use the same index set again in
Definition \ref{3.defn8}.
\end{rem}
\begin{proof}
From (\ref{3.eq7}), we see that (\ref{3.eq15}) holds for $k=0$. For
the induction step, assume that (\ref{3.eq15}) holds for some
$k\geq0$ and compute
\begin{equation*}
\dt\D^{k+1}\Rm{}=\dt(\partial\D^k\Rm{}+\Gamma*\D^k\Rm{})
=\D(\Lap\D^k\Rm{}) +\D I_k +\dt\Gamma*\D^k\Rm{}.
\end{equation*}
Since $\D I_k$ is of the form $I_{k+1}$ and also
$\dt\Gamma*\D^k\Rm{}=\big(\D\Rm{}+\alpha\D^2\phi*\D\phi\big)*\D^k\Rm{}$
appears in $I_{k+1}$, it remains to compute the very first term.
With the commutator rule (\ref{app.eq7}), we get
\begin{align*}
\D(\Lap\D^k\Rm{})&=\Lap\D^{k+1}\Rm{}+\D\Rm{}*\D^k\Rm{}+\Rm{}
*\D^{k+1}\Rm{}\\%
&=\Lap\D^{k+1}\Rm{}+I_{k+1}.\qedhere
\end{align*}
\end{proof}
Similar to (\ref{3.eq8}), we obtain
\begin{align*}
\dt\abs{\D^k\Rm{}}^2 &= \big(\dt g^{-1}\big)*\D^k\Rm{}*\D^k\Rm{}
+2\D^k\Rm{}\big(\dt\D^k\Rm{}\big)\\%
&=\Rm{}*(\D^k\Rm{})^{*2}+\alpha\,(\D\phi)^{*2}*(\D^k\Rm{})^{*2}
+2\D^k\Rm{}(\Lap\D^k\Rm{})+\D^k\Rm{}*I_k.
\end{align*}
Hence, using the fact that $\Rm{}*\D^k\Rm{}$ as well as
$\alpha\,(\D\phi)^{*2}*\D^k\Rm{}$ are already contained in $I_k$, we
find
\begin{equation}\label{3.eq16}
\dt\abs{\D^k\Rm{}}^2=\Lap\abs{\D^k\Rm{}}^2
-2\abs{\D^{k+1}\Rm{}}^2+\D^k\Rm{}*I_k.
\end{equation}
\begin{defn}\label{3.defn8}
To compute the higher order derivatives of $\phi$, we define
\begin{equation}\label{3.eq17}
\begin{aligned}
J_k &:=\sum_{i+j=k}\D^i\Rm{}*\D^{j+2}\phi +
\sum_{A_k}(\partial^i\NRm{}+1)*\D^{j_1}\phi*\ldots*\D^{j_\ell}\phi\\%
&\quad\;+\alpha\sum_{B_k}\D^{j_1}\phi*\ldots*
\D^{j_{\ell-1}}\phi*\D^{j_\ell+2}\phi,
\end{aligned}
\end{equation}
with $A_k$ and $B_k$ defined as in Definition \ref{3.defn6}.
\end{defn}
\begin{lemma}\label{3.lemma9}
Let $(g(t),\phi(t))_{t\in[0,T)}$ be a solution to the $(RH)_\alpha$
flow. Then for $k\geq 0$, with $J_k$ defined as in (\ref{3.eq17}),
we have
\begin{equation}\label{3.eq18}
\D_t(\D^{k+2}\phi) = \Lap \D^{k+2} \phi + J_k.
\end{equation}
\end{lemma}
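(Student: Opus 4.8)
The plan is to argue by induction on $k$, in exact parallel with the proof of Lemma~\ref{3.lamma7} for the derivatives of $\Rm{}$. For the base case $k=0$ the asserted identity $\D_t(\D^2\phi)=\Lap\D^2\phi+J_0$ is precisely equation~(\ref{3.eq11}); all that has to be checked is that each term appearing there besides $\Lap\D^2\phi$ is among the summands allowed in $J_0$. Indeed $(\Rm{}*\D^2\phi)_{ij}$ is the only summand of $\sum_{i+j=0}\D^i\Rm{}*\D^{j+2}\phi$; the term $\alpha\,\D_i\D_j\phi*\D\phi*\D\phi^\lambda$ is the $B_0$-summand for the tuple $(j_1,j_2,j_3)=(1,1,0)$, since then $\alpha\,\D^{j_1}\phi*\D^{j_2}\phi*\D^{j_3+2}\phi=\alpha\,\D\phi*\D\phi*\D^2\phi$; and $(\NRm{}*\D^2\phi*(\D\phi)^{*2})_{ij}$ and $((\partial\NRm{})*(\D\phi)^{*4})_{ij}$ are $A_0$-summands, for $(i;j_1,j_2,j_3)=(0;2,1,1)$ and $(i;j_1,j_2,j_3,j_4)=(1;1,1,1,1)$ respectively. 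The remaining formal summands of $J_0$ then carry coefficient zero, which the $*$-convention of Definition~\ref{3.defn1} permits.

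For the inductive step I would assume~(\ref{3.eq18}) for some $k\geq0$, write $\D^{k+3}\phi=\D(\D^{k+2}\phi)$, and (working throughout with the covariant time derivative $\D_t$ and viewing $\D^{k+2}\phi$ as a $\phi^*TN$-valued $(k+2)$-tensor, as in Section~4) first commute $\D_t$ past one spatial derivative, using the commutator identity~(\ref{app.eq14}) that was already used to derive~(\ref{3.eq11}). This yields
\begin{equation*}
\D_t\big(\D^{k+3}\phi\big)=\D\big(\D_t\D^{k+2}\phi\big)+\big(\dt\Gamma\big)*\D^{k+2}\phi+\NRm{}*\tau_g\phi*\D\phi*\D^{k+2}\phi.
\end{equation*}
Substituting the inductive hypothesis $\D_t\D^{k+2}\phi=\Lap\D^{k+2}\phi+J_k$ and then moving $\D$ through the Laplacian by means of the commutator rules~(\ref{app.eq7}) and~(\ref{app.eq8}) on $\phi^*TN$-valued tensors, this becomes
\begin{equation*}
\D_t\big(\D^{k+3}\phi\big)=\Lap\D^{k+3}\phi+[\D,\Lap]\D^{k+2}\phi+\D J_k+\big(\dt\Gamma\big)*\D^{k+2}\phi+\NRm{}*\tau_g\phi*\D\phi*\D^{k+2}\phi,
\end{equation*}
where $\tau_g\phi$ is read as a metric contraction of $\D^2\phi$ and $\dt\Gamma$ is replaced by $(\D\Rm{})^{*1}+\alpha\,\D^2\phi*\D\phi$ via~(\ref{3.eq3}).

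It then remains to show that the sum of the four correction terms is again a quantity of the form $J_{k+1}$, and I expect this combinatorial bookkeeping to be the only genuine obstacle. One expands $[\D,\Lap]\D^{k+2}\phi$ into terms of the schematic shapes $\Rm{}*\D^{k+3}\phi$, $(\D\Rm{})*\D^{k+2}\phi$, $\NRm{}*(\D\phi)^{*2}*\D^{k+3}\phi$, $(\partial\NRm{})*(\D\phi)^{*3}*\D^{k+2}\phi$ and $\NRm{}*\D^2\phi*\D\phi*\D^{k+2}\phi$, and differentiates $J_k$ term by term, using that $\D$ applied to a monomial of $J_k$ either raises one exponent $\D^{j_s}\phi\mapsto\D^{j_s+1}\phi$, raises the order of a factor $\D^i\Rm{}$, or acts on a pulled-back curvature factor via $\partial^i\NRm{}\mapsto(\partial^{i+1}\NRm{})*\D\phi$. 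One then verifies that each resulting $*$-monomial obeys the defining inequalities of $A_{k+1}$, of $B_{k+1}$, or of the first sum in~(\ref{3.eq17}); the points needing care are that the bound $i\leq k+1$ in $A_k$ guarantees $i+1\leq(k+1)+1$ after differentiating an $\NRm{}$-factor, that the strict bound $j_s<k+2$ on the non-terminal indices of $B_k$ relaxes to $j_s<(k+1)+2$ once an exponent is raised, and that the terminal bound $j_\ell\leq k$ of $B_k$ becomes $j_\ell\leq k+1$. Since the four correction terms then all lie in $J_{k+1}$, the induction closes.
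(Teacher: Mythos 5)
Your proposal is correct and follows essentially the same route as the paper: the base case is read off from (\ref{3.eq11}), and the induction step commutes $\D_t$ past one spatial derivative via the analogue of (\ref{app.eq14}), inserts the inductive hypothesis, commutes $\D$ through $\Lap$ on $\phi^*TN$-valued tensors, and checks that all correction terms (including $\dt\Gamma*\D^{k+2}\phi$ via (\ref{3.eq3}) and the $\NRm{}$-terms) fit the index sets defining $J_{k+1}$. Your bookkeeping of the $A_k$, $B_k$ constraints is in fact more explicit than the paper's, which simply asserts these memberships.
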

\begin{proof}
For $k=0$, the statement holds by (\ref{3.eq11}). For the induction
step, we use again the interpretation of $\D^k\phi$ as a $k$-linear
$TN$-valued map along $\tilde{\phi}$ and compute analogously to
(\ref{app.eq14}) and the remark following it
\begin{align*}
\D_t(\D^{k+3}\phi) &=\D\D_t(\D^{k+2}\phi) +
\dt\Gamma*\D^{k+2}\phi + \NRm{\dt\phi,\D\phi}\D^{k+2}\phi\\
&= \D(\Lap\D^{k+2}\phi)+\D J_k +\dt\Gamma*\D^{k+2}\phi +
\NRm{}*\D^2\phi*\D\phi*\D^{k+2}\phi.
\end{align*}
Again, we only have to look at the first term, since $\D J_k$,
$\NRm{}*\D^2\phi*\D\phi*\D^{k+2}\phi$ and $\dt\Gamma*\D^{k+2}\phi =
\big(\D\Rm{}+\alpha\D^2\phi*\D\phi\big)
*\D^{k+2}\phi$ are obviously of the form $J_{k+1}$. With a higher
order analog to (\ref{app.eq10}), we obtain
\begin{align*}
\D(\Lap\D^{k+2}\phi)&=\D_p\D\D_p\D^{k+2}\phi +\Rm{}*\D^{k+3}\phi +
\NRm{}*\D^{k+3}\phi*\D\phi*\D\phi\\%
&=\D_p\D_p\D^{k+3}\phi + \D\Rm{}*\D^{k+2}\phi+ \Rm{}*\D^{k+3}\phi\\%
&\quad\,+(\partial \NRm{})*\D^{k+2}\phi*(\D\phi)^{*3} +
\NRm{}*\D^{k+3}\phi*(\D\phi)^{*2}\\%
&\quad\,+\NRm{}*\D^{k+3}\phi*\D^2\phi*\D\phi\\%
&=\Lap\D^{k+3}\phi+J_{k+1},
\end{align*}
and the claim follows.
\end{proof}
As in (\ref{3.eq12}), we compute
\begin{align*}
\dt\abs{\D^{k+2}\phi}^2&=\big(\dt g^{-1}\big)
*\D^{k+2}\phi*\D^{k+2}\phi+2\D^{k+2}\phi^\lambda
\D_t\big(\D^{k+2}\phi^\lambda\big)\\
&= \Rm{}*(\D^{k+2}\phi)^{*2}+\alpha\,
(\D\phi)^{*2}*(\D^{k+2}\phi)^{*2}\\
&\quad+2\D^{k+2}\phi^\lambda(\Lap\D^{k+2}\phi^\lambda) +
\D^{k+2}\phi*J_k\\
&= 2\D^{k+2}\phi^\lambda(\Lap\D^{k+2}\phi^\lambda)+\D^{k+2}\phi*J_k.
\end{align*}
With $\Lap\abs{\D^{k+2}\phi}^2=2\D^{k+2}\phi^\lambda(\Lap\D^{k+2}\phi^\lambda)
+2\abs{\D^{k+3}\phi}^2$, we finally find
\begin{equation}\label{3.eq19}
\dt\abs{\D^{k+2}\phi}^2=\Lap\abs{\D^{k+2}\phi}^2-2\abs{\D^{k+3}\phi}^2
+ \D^{k+2}\phi*J_k.
\end{equation}
The next trick is to combine the two equations (\ref{3.eq16}) and
(\ref{3.eq19}) to a single equation. Remember that we already used a
similar idea in Section 4, where we combined the evolution
equations of $\Rc{}$ and $\D\phi\otimes\D\phi$ (respectively $R$ and
$\abs{\D\phi}^2$) to a single equation for a combined quantity
$S_{ij}$ (respectively $S$), which was much more convenient to deal
with. Here, we define the ``vector''
\begin{equation}\label{3.eq20}
\sT=(\Rm{},\D^2\phi) \in \Gamma\big((T^*M)^{\otimes 4}\big) \times
\Gamma\big((T^*M)^{\otimes 2}\otimes\phi^*TN\big)
\end{equation}
with norm $\abs{\sT}^2 = \abs{\Rm{}}^2 + \abs{\D^2\phi}^2$ and
derivatives $\D^k\sT=(\D^k\Rm{},\D^{k+2}\phi)$. Combining the
evolution equations (\ref{3.eq16}) and (\ref{3.eq19}), we get
\begin{equation}\label{3.eq21}
\dt\abs{\D^k\sT}^2 = \Lap\abs{\D^k\sT}^2-2\abs{\D^{k+1}\sT}^2 + L_k,
\end{equation}
where $L_k := \D^k\Rm{}*I_k + \D^{k+2}\phi*J_k$. We can now apply
Bernstein's ideas \cite{Bernstein:estimates} to obtain
interior-in-time estimates for all derivatives $\abs{\D^k\sT}^2$ via
an induction argument. For the Ricci flow, this was independently
done by Bando \cite{Bando:estimates} and Shi \cite{Shi:complete}. 
\begin{thm}\label{3.thm10}
Let $(g(t),\phi(t))_{t\in[0,T)}$ solve $(RH)_\alpha$ with
non-increasing $\alpha(t)\in[\underaccent{\bar}\alpha,\bar\alpha]$,
$0<\underaccent{\bar}\alpha\leq\bar\alpha<\infty$ and $T<\infty$.
Let the Riemannian curvature tensor of $M$ be uniformly bounded
along the flow, $\abs{\Rm{}}\leq R_0$. Then there exists a constant
$K=K(\underaccent{\bar}\alpha,\bar\alpha,R_0,T,m,N)<\infty$ such
that the following two estimates hold
\begin{align}
\abs{\D\phi}^2 &\leq \tfrac{K}{t}, \quad \forall (x,t)\in
M\times(0,T),\label{3.eq22}\\
\abs{\sT}^2 &=\abs{\Rm{}}^2+\abs{\D^2\phi}^2 \leq \tfrac{K^2}{t^2},
\quad \forall (x,t)\in M\times(0,T).\label{3.eq23}
\end{align}
Moreover, there exist constants $C_k$ depending on $k$,
$\bar\alpha$, $m$ and $N$, such that
\begin{equation}\label{3.eq24}
\abs{\D^k\sT}^2 = \abs{\D^k\Rm{}}^2+\abs{\D^{k+2}\phi}^2 \leq C_k
\big(\tfrac{K}{t}\big)^{k+2}, \quad\forall (x,t)\in M\times(0,T).
\end{equation}
\end{thm}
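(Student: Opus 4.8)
The plan is to follow the Bando--Shi scheme for Shi-type interior estimates (\cite{Bando:estimates,Shi:complete}; for the flat-target case, List \cite[\S 2]{List:diss}), built around the combined quantity $\sT=(\Rm{},\D^2\phi)$, its evolution equation (\ref{3.eq21}), and the generalized maximum principle (Proposition \ref{app.prop1}). Estimate (\ref{3.eq22}) is essentially immediate: $\abs{\Rm{}}\le R_0$ bounds the scalar curvature by $c(m)R_0$, so Corollary \ref{2.cor9} (applicable because $\alpha(t)$ is non-increasing and $\ge\underaccent{\bar}\alpha>0$) yields $\abs{\D\phi}^2\le\tfrac{c(m)R_0}{\underaccent{\bar}\alpha}+\tfrac{m}{2\underaccent{\bar}\alpha t}$, and for $t<T$ the bounded term is swallowed by $K/t$ after enlarging $K$.

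For (\ref{3.eq23}) it is enough to bound $\abs{\D^2\phi}^2$, since $\abs{\Rm{}}^2\le R_0^2$ is already controlled. The difficulty --- absent for the Ricci flow --- is that the reaction terms in the evolution of $\abs{\D^2\phi}^2$ (Proposition \ref{3.prop5}) carry factors $\abs{\D\phi}^2$ and $\abs{\D\phi}^4$, which by (\ref{3.eq22}) are only controlled by negative powers of $t$ near $t=0$. I would circumvent this by parabolic rescaling: to estimate $\abs{\D^2\phi}^2$ at a fixed $\tau\in(0,T)$, pass to $g_\tau(s):=\tau^{-1}g(\tau s)$, $\phi_\tau(s):=\phi(\tau s)$, again a solution of $(RH)_{\alpha_\tau}$ with non-increasing $\alpha_\tau(s)=\alpha(\tau s)\in[\underaccent{\bar}\alpha,\bar\alpha]$, for which $\abs{\Rm{}}\le\tau R_0\le TR_0$ and, by Corollary \ref{2.cor9} applied to $g_\tau$, $\abs{\D\phi_\tau}^2\le\tfrac{Tc(m)R_0}{\underaccent{\bar}\alpha}+\tfrac{m}{\underaccent{\bar}\alpha}=:V$ on $s\in[\tfrac12,1]$ --- bounds independent of $\tau$ and of the initial data of $\phi$. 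On $[\tfrac12,1]$ all reaction coefficients in the evolutions of $\abs{\D^2\phi_\tau}^2$ and of $\abs{\D\phi_\tau}^2$ are bounded, so the standard Bernstein test function $F=(s-\tfrac12)\abs{\D^2\phi_\tau}^2+A\abs{\D\phi_\tau}^2$ works: for $A$ large, using (\ref{2.eq12}) for $\abs{\D\phi_\tau}^2$ (Gauss-curvature term of $N$ bounded by $c_0\abs{\D\phi_\tau}^4$, damping terms kept) and Young's inequality on the $\abs{\D\phi_\tau}^4\abs{\D^2\phi_\tau}$-term, the contribution $-2A\abs{\D^2\phi_\tau}^2$ coming from $\abs{\D\phi_\tau}^2$ absorbs all positive $\abs{\D^2\phi_\tau}^2$-terms, leaving $\ds F\le\Lap F+(\text{const})$; hence $\abs{\D^2\phi_\tau(x,1)}^2\le C(\underaccent{\bar}\alpha,\bar\alpha,R_0,T,m,N)$. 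Undoing the scaling gives $\abs{\D^2\phi(x,\tau)}^2=\tau^{-2}\abs{\D^2\phi_\tau(x,1)}^2\le C\tau^{-2}$, which together with $\abs{\Rm{}}^2\le R_0^2$ yields (\ref{3.eq23}) after enlarging $K$.

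The higher estimates (\ref{3.eq24}) follow by induction on $k$, the case $k=0$ being (\ref{3.eq23}) and the mechanism being the same rescaling. Assuming $\abs{\D^j\sT}^2\le C_j(K/t)^{j+2}$ for all $j\le k$, one bounds each monomial of $L_{k+1}=\D^{k+1}\Rm{}*I_{k+1}+\D^{k+3}\phi*J_{k+1}$ via the definitions of $I_{k+1},J_{k+1}$ (Definitions \ref{3.defn6},\ref{3.defn8}), the boundedness of $\NRm{}$ and $\partial\NRm{}$ on compact $N$, the bounds $\abs{\Rm{}}\le R_0$, (\ref{3.eq22}), and the inductive hypotheses; each monomial has the same parabolic weight as $\abs{\D^{k+1}\sT}^2$, so after rescaling by $\tau$ all the hypotheses become uniform bounds on $[\tfrac12,1]$. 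The Bernstein function $F=(s-\tfrac12)\abs{\D^{k+1}\sT_\tau}^2+A\abs{\D^k\sT_\tau}^2$, with $-2A\abs{\D^{k+1}\sT_\tau}^2$ absorbing the borderline $\abs{\D^{k+1}\sT_\tau}^2$-terms and all else dominated (via Young) by a constant times a power of $K$, gives $\abs{\D^{k+1}\sT_\tau(x,1)}^2\le\tilde C_{k+1}$, which unscales to (\ref{3.eq24}) with $C_{k+1}=C_{k+1}(k,\bar\alpha,m,N)$ once the powers of $K$ are accounted for.

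I expect the real work to be in this bookkeeping rather than in any single estimate: one must check that no monomial of $L_{k+1}$ has parabolic weight exceeding that of $\abs{\D^{k+1}\sT}^2$ --- which is exactly what the intricate index sets $A_k$, $B_k$ and the cancellation of all curvature-derivative terms via the second Bianchi identity (as in (\ref{3.eq10})--(\ref{3.eq12}) and Proposition \ref{3.prop5}) are designed to guarantee --- and that, after peeling the $R_0$- and $T$-dependence into $K$, the residual constants $C_k$ depend only on $k$, $\bar\alpha$, $m$ and $N$. The only genuinely flow-specific obstacle, compared with the Ricci flow, is the $t^{-1}$ blow-up of $\abs{\D\phi}^2$ near $t=0$, which is precisely what the rescaling to the fixed interval $[\tfrac12,1]$ neutralizes.
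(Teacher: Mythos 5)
Your route is genuinely different from the paper's and, in its core, sound: the paper never rescales, but instead runs the Bando--Shi argument directly in the original time variable with weighted \emph{product} test functions, $f=t^2\abs{\D^2\phi}^2\big(8K_1+t\abs{\D\phi}^2\big)$ and, inductively, $h=t^{k+3}\abs{\D^{k+1}\sT}^2\big(\lambda+t^{k+2}\abs{\D^k\sT}^2\big)$ with $\lambda=8C_kK^{k+2}$, absorbing the cross terms $\D\abs{\D^{k+1}\sT}^2\cdot\D(\ldots)$ by Cauchy--Schwarz against the good terms $-2\abs{\D^{k+2}\sT}^2$ and $-\abs{\D^{k+1}\sT}^4$, and concluding from a quadratic inequality $-h^2+Ch+C\geq0$ at a maximum point. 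Your alternative --- parabolic rescaling $g_\tau(s)=\tau^{-1}g(\tau s)$, $\phi_\tau(s)=\phi(\tau s)$ (which indeed again solves $(RH)_{\alpha_\tau}$ with the same target and hence the same $N$-dependent constants), followed by the additive Bernstein quantity $F=(s-\tfrac12)\abs{\D^{k+1}\sT_\tau}^2+A\abs{\D^k\sT_\tau}^2$ on $[\tfrac12,1]$ --- avoids the cross terms altogether and yields the first step (\ref{3.eq22}) and the bound $\abs{\D^2\phi}^2\leq C/t^2$, hence (\ref{3.eq23}), correctly; this is a perfectly legitimate and somewhat more elementary scheme.

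There is, however, a concrete discrepancy with the statement in the inductive step: your sketch does not give $C_k=C_k(k,\bar\alpha,m,N)$, i.e.\ independent of $K$. In the rescaled picture the linear term of $L_{k+1}$ has coefficient of order $K$ (it comes from $\abs{\Rm{}}\lesssim K/s$ on $[\tfrac12,1]$), so you must take $A\gtrsim K$; but then the source term $A\cdot L_{k,\tau}\lesssim K\cdot K^{k+3}$ and the inhomogeneity of $L_{k+1,\tau}\lesssim K^{k+4}$ force $\max F(\cdot,1)\lesssim K^{k+4}$, which unscales to $\abs{\D^{k+1}\sT}^2\lesssim K\,(K/t)^{k+3}$ --- a loss of one factor of $K$ per step, compounding in the induction, so your $C_k$ end up depending on $K$ (hence on $R_0$, $T$, $\underaccent{\bar}\alpha$). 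The paper's product quantity is exactly what repairs this: the final division by $\lambda\sim C_kK^{k+2}$ in $\abs{\D^{k+1}\sT}^2=h/\big(t^{k+3}(\lambda+\ldots)\big)$ recovers the sharp power $K^{k+3}$ with $C_{k+1}=D/(8C_k)$ depending only on $k,\bar\alpha,m,N$. Within your framework the cleanest fix is to normalize not only time but also the curvature scale: rescale by a factor comparable to $K/t$ rather than $1/t$, so that on the rescaled unit-length interval one has $\abs{\sT}\lesssim1$ and $\abs{\D\phi}^2\lesssim1$; then all coefficients in $I_k$, $J_k$ are bounded purely in terms of $\bar\alpha$, $m$, $N$ (the target is unrescaled, so $c_0$, $c_1$ are unchanged, and rescaling up only shrinks $\abs{\D\phi}$), the additive Bernstein argument gives dimension- and $N$-dependent constants only, and unscaling reproduces (\ref{3.eq24}) exactly as stated. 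If instead you keep your version, you prove the estimates with the weaker constant-dependence, which would still suffice for the applications in Sections 6--8 but is not the theorem as formulated.
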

\begin{proof}
Since the method of proof is quite standard, we only give a brief sketch of the argument and
refer to the authors thesis \cite[Theorem 3.10]{Muller:diss} for more details. Setting $1\leq K_1 :=
\max\big\{\frac{2m^2R_0T+m}{2\underaccent{\bar}\alpha}, R_0T,
1\big\}<\infty$, we obtain
\begin{equation*}
\abs{\D\phi}^2\leq \frac{m^2R_0}{\underaccent{\bar}\alpha}
+\frac{m}{2\underaccent{\bar}\alpha t} \leq \frac{K_1}{t} \quad
\textrm{ and }\quad \abs{\Rm{}}\leq \frac{K_1}{t}, \quad \forall
(x,t)\in M\times(0,T)
\end{equation*}
from Corollary \ref{2.cor9}. In the following, $C$ denotes a
constant depending on $K_1$, $\bar\alpha$, $m$ and the geometry
of $N$, possibly changing from line to line. With the estimates for
$\abs{\Rm{}}$ and $\abs{\D\phi}^2$, and using $\abs{\D^2\phi}\leq
\frac{1}{t} + t\abs{\D^2\phi}^2$, we obtain for $f(x,t):=
t^2\abs{\D^2\phi}^2\big(8K_1 + t\abs{\D\phi}^2\big)$
\begin{align*}
\big(\dt-\Lap\big)f
&\leq -2t^2\abs{\D^3\phi}^2\big(8K_1+t\abs{\D\phi}^2\big) +
\tfrac{C}{t}f + \tfrac{C}{t}\cdot 9K_1 -2t^3\abs{\D^2\phi}^4 +
\tfrac{C}{t}f\\
&\quad\, + 8t^3\abs{\D^3\phi}\abs{\D^2\phi}
\cdot\abs{\D^2\phi}\abs{\D\phi}
\end{align*}
on $M\times(0,T)$. The last term can be absorbed by the two negative terms,
\begin{align*}
8t^3\abs{\D^3\phi}\abs{\D^2\phi}^2\abs{\D\phi}
&\leq \tfrac{1}{2}(8K_1)\big(4t^2\abs{\D^3\phi}^2\big) +
\tfrac{1}{2}(8K_1)^{-1}\big(16t^4\abs{\D^2\phi}^4\abs{\D\phi}^2\big)\\
&=2t^2\abs{\D^3\phi}^2\cdot 8K_1 +
\tfrac{8t\abs{\D\phi}^2}{8K_1}\cdot t^3\abs{\D^2\phi}^4\\
&\leq 2t^2\abs{\D^3\phi}^2\big(8K_1+t\abs{\D\phi}^2\big) +
t^3\abs{\D^2\phi}^4.
\end{align*}
Here, we used $\tfrac{8t\abs{\D\phi}^2}{8K_1}\leq 1$ which motivates
our choice of the constant $8K_1$ in the definition of $f$. From
$\big(\dt-\Lap\big)f \leq\tfrac{C}{t}f+\tfrac{C}{t}-t^3\abs{\D^2\phi}^4 \leq
\tfrac{1}{(9K_1)^2t}\big(Cf + C-f^2\big)$, we conclude, using $f(\cdot,0)=0$ and the maximum principle, that $-f^2+Cf+C\geq 0$. Equivalently, $f \leq D:=\frac{1}{2}\big(C + \sqrt{C^2+4C}\big)$ on $M\times[0,T)$. For
positive $t$, this implies
\begin{equation}\label{3.eq28}
\abs{\D^2\phi}^2 = \frac{f}{t^2(8K_1+t\abs{\D\phi}^2)} \leq
\frac{D}{8K_1t^2}\leq \Big(\frac{K_2}{t}\Big)^2,
\end{equation}
where $K_2:= \sqrt{D/8K_1}<\infty$. Setting $K:=K_1+K_2$, we get
(\ref{3.eq22}) and (\ref{3.eq23}). Using a similar argument, one can then prove (\ref{3.eq24}) inductively. The crucial estimates are $L_k \leq C\big(\tfrac{K}{t}\big)^{k+3}$ and
\begin{equation*}
L_{k+1} \leq C\big(\tfrac{K}{t}\big)^{k+4}+C\tfrac{K}{t}\abs{\D^{k+1}\sT}^2,
\end{equation*}
where $C$ now denotes a constant depending only on $\bar\alpha$, $m$, $N$ and $k$
(but not on $K$ or $T$). Defining $h(x,t):=t^{k+3}\abs{\D^{k+1}\sT}^2\big(\lambda +
t^{k+2}\abs{\D^k\sT}^2\big)$ with $\lambda = 8C_kK^{k+2}$, these estimates give
\begin{align*}
\big(\dt-\Lap\big)h &\leq -2t^{k+3}\abs{\D^{k+2}\sT}^2\big(\lambda + t^{k+2}
\abs{\D^k\sT}^2\big)+\tfrac{CK}{t}h+\tfrac{C}{t}K^{k+4}\big(\lambda
+ t^{k+2}\abs{\D^k\sT}^2\big)\\
&\quad\,-2t^{2k+5}\abs{\D^{k+1}\sT}^4+\tfrac{C}{t}h+\tfrac{C}{t}K^{k+3}
t^{k+3}\abs{\D^{k+1}\sT}^2\\
&\quad\,+8t^{2k+5}\abs{\D^{k+2}\sT}\abs{\D^{k+1}\sT}\cdot
\abs{\D^{k+1}\sT}\abs{\D^k\sT}.
\end{align*}
Using $K\geq 1$, the inductive assumption and Cauchy-Schwarz, we rewrite this as
\begin{align*}
\big(\dt-\Lap\big)h &\leq -2t^{k+3}\abs{\D^{k+2}\sT}^2\big(\lambda +
t^{k+2} \abs{\D^k\sT}^2\big) -\tfrac{3}{2}t^{2k+5}
\abs{\D^{k+1}\sT}^4\\
&\quad\,+\tfrac{CK}{t}h+\tfrac{C}{t}K^{2k+6}
+8t^{2k+5}\abs{\D^{k+2}\sT}\abs{\D^{k+1}\sT}^2\abs{\D^k\sT}.
\end{align*}
Again, the last term can be absorbed by the negative terms
\begin{align*}
8t^{2k+5}\abs{\D^{k+2}\sT}\abs{\D^{k+1}\sT}^2\abs{\D^k\sT}
&\leq\tfrac{1}{2}\lambda\big(4t^{k+3}\abs{\D^{k+2}\sT}^2\big) +
\tfrac{1}{2}\lambda^{-1}\big(16t^{3k+7}\abs{\D^{k+1}\sT}^4
\abs{\D^k\sT}^2\big)\\
&=2t^{k+3}\abs{\D^{k+2}\sT}^2\cdot \lambda +
\tfrac{8t^{k+2}\abs{\D^k\sT}^2}{\lambda}\cdot
t^{2k+5}\abs{\D^{k+1}\sT}^4\\
&\leq 2t^{k+3}\abs{\D^{k+2}\sT}^2\big(\lambda
+t^{k+2}\abs{\D^k\sT}^2\big)+t^{2k+5}\abs{\D^{k+1}\sT}^4,
\end{align*}
which explains our choice of $\lambda$. A maximum principle argument like the one for $f$ above then yields $-h^2+CK^{2k+5}h+CK^{4k+10} \geq 0$, i.e.~$h \leq\tfrac{1}{2}(C+\sqrt{C^2+4C})K^{2k+5}=:DK^{2k+5}$ on $M\times[0,T)$. For $t>0$,
\begin{equation*}
\abs{\D^{k+1}\sT}^2 =
\frac{h}{t^{k+3}(\lambda+t^{k+2}\abs{\D^k\sT}^2)}\leq
\frac{DK^{2k+5}}{t^{k+3}8C_kK^{k+2}} =
C_{k+1}\Big(\frac{K}{t}\Big)^{k+3},
\end{equation*}
where $C_{k+1}:=D/(8C_k)$. This proves the induction step and hence
the theorem.
\end{proof}
In the following corollary, we state a local version of the gradient estimates.
The setting is made in such a way to perfectly fit the proof of the non-collapsing 
result in Section 8.
\begin{cor}\label{app.prop5}
Let $(g(t),\phi(t))_{t\in[0,T)}$ solve $(RH)_\alpha$ with
non-increasing $\alpha(t)\in[\underaccent{\bar}\alpha,\bar\alpha]$,
$0<\underaccent{\bar}\alpha\leq\bar\alpha<\infty$ and $T'<T<\infty$.
Let $B:=B_{g(T')}(x,r)$ be a ball around $x$ with radius $r$,
measured with respect to the metric at time $T'$. Assume that
$\abs{\Rm{}}\leq R_0$ on the set $B\times[0,T')$. Then there exist
constants $K=K(\underaccent{\bar}\alpha,\bar\alpha,
R_0,T,m,N)<\infty$ and $C_k=C_k(k,\bar\alpha,m,N)$ for $k\in\NN$,
$C_0=1$, such that the following estimates hold for $k\geq0$
\begin{align}
\abs{\D\phi}^2 &\leq
\tfrac{K}{t}\quad\text{and}\quad\abs{\Rm{}}\leq\tfrac{K}{t},
\quad\forall (x,t)\in B^{1/2}\times(0,T'),\label{app.eq26}\\
\abs{\D^k\sT}^2 &= \abs{\D^k\Rm{}}^2+\abs{\D^{k+2}\phi}^2 \leq C_k
\big(\tfrac{K}{t}\big)^{k+2}, \quad\forall(x,t)\in
B^{1/2}\times(0,T'),\label{app.eq27}
\end{align}
where $B^{1/2}:=B_{g(T')}(x,r/2)$ is the ball of half the radius and
the same center as $B$.
\end{cor}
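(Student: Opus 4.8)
The plan is to localize the argument of Theorem \ref{3.thm10}, splitting into a zeroth-order part (which needs no cutoff) and a higher-order part (which does).

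\textbf{Step 1 (the bounds (\ref{app.eq26})).} First I would observe that the inequality $S(x,t)\geq -\tfrac{m}{2t}$, obtained in the discussion preceding Corollary \ref{2.cor9} from $\dt S\geq \Lap S+\tfrac{2}{m}S^2$, holds on \emph{all} of $M\times(0,T')$: its proof uses only the maximum principle on the closed manifold $M$ and is insensitive to any local hypothesis. Since $\abs{\Rm{}}\leq R_0$ on $B\times[0,T')$ gives $\abs{R}\leq c(m)R_0$ on that set, on $B\times(0,T')$ we obtain $\abs{\D\phi}^2=\alpha^{-1}(R-S)\leq \underaccent{\bar}\alpha^{-1}\big(c(m)R_0+\tfrac{m}{2t}\big)\leq \tfrac{K}{t}$ after choosing $K=K(\underaccent{\bar}\alpha,\bar\alpha,R_0,T,m,N)$ large enough, and, enlarging $K$ so that $K\geq R_0T$, also $\abs{\Rm{}}\leq R_0\leq\tfrac{K}{t}$ for $t<T$. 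This proves (\ref{app.eq26}), in fact on all of $B$, not just $B^{1/2}$.

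\textbf{Step 2 (the bounds (\ref{app.eq27})).} Here I would rerun the induction of Theorem \ref{3.thm10}, but with the test functions $f$ and $h$ replaced by $\varphi^N f$ and $\varphi^N h$, where $\varphi\in C^\infty(M)$ is a fixed cutoff with $0\leq\varphi\leq1$, $\varphi\equiv1$ on $B^{1/2}$, $\supp\varphi\subseteq B^{3/4}:=B_{g(T')}(x,\tfrac34 r)$, and $N=N(k)$ is a sufficiently large exponent. The curvature bound $\abs{\Rm{}}\leq R_0$ on $B$, together with Hessian comparison, lets one build such a $\varphi$ as a smooth function of the $g(T')$-distance with $\abs{\D\varphi}_{g(T')}$ and $\abs{\Hess_{g(T')}\varphi}$ controlled in terms of $r$ and $R_0$; moreover on $B$ one has $\dt g=-2\Rc{}+2\alpha\D\phi\otimes\D\phi\geq -C R_0\,g$, whence $g(t)\leq e^{CR_0T}g(T')$ on $B\times[0,T')$, so that $\abs{\D\varphi}_{g(t)}$ is controlled from above as well. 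Plugging into (\ref{3.eq21}) and computing $(\dt-\Lap)(\varphi^N f)$ and $(\dt-\Lap)(\varphi^N h)$ produces, besides the good negative terms $-2\abs{\D^{k+2}\sT}^2(\cdots)$ and $-\tfrac32 t^{\,\cdots}\abs{\D^{k+1}\sT}^4$ already present, extra terms of schematic form $\varphi^{N-1}\D\varphi\ast\D(\cdot)$, $\varphi^{N-2}\abs{\D\varphi}^2(\cdot)$, and $\varphi^{N-1}\Lap\varphi\,(\cdot)$; choosing $N$ large (Shi's standard device, using $\abs{\D(\varphi^N)}^2\leq N^2\abs{\D\varphi}^2\varphi^N$) lets each of these be absorbed by the good terms exactly as the cross-term was absorbed in the proof of Theorem \ref{3.thm10}. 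The maximum principle (Proposition \ref{app.prop1}) then bounds $\varphi^N h$, hence $h$ on $B^{1/2}$, and the induction on $k$ closes.

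\textbf{Main obstacle.} The delicate point — and the one genuinely new feature compared with the Ricci-flow local estimate — is the uniform control of the cutoff-derivative terms. While the upper comparison $g(t)\leq e^{CR_0T}g(T')$ is clean, the \emph{lower} comparison degrades like $(t/T')^{O(K)}$ as $t\to0$, since $\dt g\leq (CR_0+\tfrac{CK}{t})g$ only uses the $\tfrac{K}{t}$-bound on $\abs{\D\phi}^2$ from Step 1, so the $t$-weights built into $f,h$ do not by themselves absorb this blow-up. To handle it one localizes also in time: to estimate $\abs{\D^{k+1}\sT}^2(x_0,t_0)$ for $x_0\in B^{1/2}$ one runs the cutoff maximum principle on $B^{3/4}\times[t_0/2,t_0]$, on which $t\geq t_0/2$ forces $\abs{\D\phi}^2\leq \tfrac{2K}{t_0}$ and hence the metrics to be uniformly comparable with a constant depending only on the admissible quantities, using the inductively-controlled lower-order quantities at time $t_0/2$ as initial data. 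Everything else is the same bookkeeping as in Theorem \ref{3.thm10}: tracking the powers of $\varphi$ through the induction and checking that every new term is dominated by $-2\abs{\D^{k+2}\sT}^2(\cdots)$.
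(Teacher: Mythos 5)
Your proposal follows essentially the same route as the paper, whose own proof is only a sketch: the bounds (\ref{app.eq26}) are obtained exactly as in Theorem \ref{3.thm10} (the global estimate $S\geq-\tfrac{m}{2t}$ combined with the local curvature bound on $B$), and (\ref{app.eq27}) by rerunning the induction with a cutoff function so that the maxima are attained in the interior of $B$, which is what the paper does (deferring details to the thesis), and your discussion of the metric-comparability issue and the time localization is the standard way those details are handled. One small caution: in the localized maximum principle on $B^{3/4}\times[t_0/2,t_0]$ the weight $t^{k+3}$ must be replaced by $(t-t_0/2)^{k+3}$ (or a temporal cutoff inserted) so that the test function vanishes at $t=t_0/2$ --- the top-order quantity $\abs{\D^{k+1}\sT}^2(\cdot,t_0/2)$ is not inductively controlled and so cannot serve as ``initial data'' --- after which your absorption scheme closes exactly as in Theorem \ref{3.thm10}.
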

\begin{proof}
The statement (\ref{app.eq26}) follows exactly as in Theorem
\ref{3.thm10}. The induction step is carried out using a cut-off function 
to ensure that the maxima are attained in the interior of the set $B$. 
More details can be found in the authors thesis 
\cite[Proposition A.5]{Muller:diss}.
\end{proof}

\subsection{Long-time existence}
This subsection follows Section 6.7 about long-time existence for
the Ricci flow from Chow and Knopf's book \cite{RF:intro}. We first
need a technical lemma.
\begin{lemma}\label{3.lemma11}
Let $(g(t),\phi(t))_{t\in[0,T)}$ solve $(RH)_\alpha$ with a
non-increasing $\alpha(t)\in[\underaccent{\bar}\alpha,\bar\alpha]$,
$0<\underaccent{\bar}\alpha\leq\bar\alpha<\infty$ and $T<\infty$.
Let the Riemannian curvature tensor of $M$ be uniformly bounded
along the flow, $\abs{\Rm{}}\leq R_0$, and fix a background metric
$\tilde{g}$. Then for each $k\geq 0$ there exists a constant $C_k$
depending on $k$, $m$, $N$, $T$, $\underaccent{\bar}\alpha$,
$\bar\alpha$, $R_0$ and the initial data $(g(0),\phi(0))$ such that
\begin{equation}\label{3.eq32}
\abs{\tilde{\D}^kg(x,t)}_{\tilde{g}}^2 +
\abs{\tilde{\D}^k\Rm{x,t}}_{\tilde{g}}^2 +
\abs{\tilde{\D}^k\phi(x,t)}_{\tilde{g}}^2 \leq C_k
\end{equation}
for all $(x,t)\in M\times[0,T)$. Here, $\tilde{\D}=
\!\phantom{l}^{\tilde{g}}\D$ denotes the Levi-Civita connection with
respect to the background metric $\tilde{g}$.
\end{lemma}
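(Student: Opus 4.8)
The plan is to first produce bounds on all covariant derivatives of $\Rm{}$ and of $\phi$ measured in the \emph{evolving} metric $g(t)$, and then convert these into bounds measured in the \emph{fixed} background metric $\tilde g$. For the first part, fix any $t_0\in(0,T)$: on $M\times[t_0,T)$ the hypothesis $\abs{\Rm{}}\leq R_0$ (which in particular bounds $R$) together with Corollary~\ref{2.cor9} bounds $\abs{\D\phi}^2_g$, and Theorem~\ref{3.thm10} then bounds $\abs{\D^k\Rm{}}^2_g+\abs{\D^{k+2}\phi}^2_g$ for every $k\geq0$; on the compact set $M\times[0,t_0]$ the solution is smooth, so these continuous quantities are automatically bounded there; and $\abs{\phi}$ is bounded since $N$ is compact. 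Hence for every $k\geq0$ there is a constant $B_k<\infty$, depending only on the quantities listed in the statement, with $\abs{\D^k\Rm{}}_g+\abs{\D^k\phi}_g\leq B_k$ on all of $M\times[0,T)$. Next, writing $\sS=\Rc{}-\alpha\D\phi\otimes\D\phi$ so that $\dt g_{ij}=-2S_{ij}$, we have $\abs{\sS}_g\leq\abs{\Rc{}}_g+\bar\alpha\abs{\D\phi}^2_g\leq C$, hence $\abs{\tfrac{d}{dt}\log g(v,v)}\leq 2C$ for every tangent vector $v$ and therefore $e^{-2CT}g(0)\leq g(t)\leq e^{2CT}g(0)$. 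Since $g(0)$ and $\tilde g$ are fixed smooth metrics on the closed manifold $M$, the metrics $g(t)$, $t\in[0,T)$, are all uniformly equivalent to $\tilde g$, so $g$- and $\tilde g$-norms of tensor fields are comparable up to constants depending only on the data.

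The core of the argument is to control the difference tensor $A:=\Gamma-\tilde\Gamma$ of the Levi-Civita connections of $g(t)$ and $\tilde g$, together with all its $\tilde\D$-derivatives. The crucial structural fact is that $\tilde\D$ is the fixed connection of $\tilde g$, hence commutes with $\dt$, while $\tilde\Gamma$ is constant in time. Then (\ref{3.eq1}) says that $\dt A=\dt\Gamma$ is a contraction of $g^{-1}$ with $\D\Rc{}$ and with $\alpha\,\D^2\phi\otimes\D\phi$, so by the bounds $B_\bullet$ and the metric equivalence $\abs{\dt A}_{\tilde g}\leq C$ on $[0,T)$; integrating in time, using that $A(0)=\Gamma(g_0)-\tilde\Gamma$ is a fixed tensor and that $T<\infty$, gives $\abs{A}_{\tilde g}\leq C$ on $M\times[0,T)$. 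Higher derivatives follow by induction on $k$: assuming $\abs{\tilde\D^j A}_{\tilde g}$, $\abs{\tilde\D^j g}_{\tilde g}$, $\abs{\tilde\D^j g^{-1}}_{\tilde g}$ are bounded for all $j\leq k$, one applies $\tilde\D^k$ to the formula for $\dt A$, commutes $\tilde\D^k$ past $\dt$, expands by the Leibniz rule, and rewrites the evolving-connection derivatives $\D^{\leq k+1}\Rm{}$, $\D^{\leq k+2}\phi$ occurring there in terms of the bounds $B_\bullet$ and of $\tilde\D^{\leq k-1}A$ (bounded by the inductive hypothesis) — this makes $\abs{\dt\tilde\D^k A}_{\tilde g}$ bounded, and time-integration gives $\abs{\tilde\D^k A}_{\tilde g}\leq C$. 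Since $\tilde\D_l g_{ij}=A^p_{li}g_{pj}+A^p_{lj}g_{pi}$, i.e. $\tilde\D g$ is a contraction of $g$ with $A$, differentiating this identity $k$ more times and using Leibniz closes the induction for $\tilde\D^{\leq k+1}g$ and, via $\tilde\D g^{-1}=-g^{-1}*g^{-1}*\tilde\D g$, for $\tilde\D^{\leq k+1}g^{-1}$.

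It then remains to convert. With $\abs{\tilde\D^j A}_{\tilde g}$ bounded for every $j$ and with all $\abs{\D^k\Rm{}}_g$, $\abs{\D^k\phi}_g$ bounded, one writes schematically $\tilde\D^k\Rm{}=\D^k\Rm{}+\sum(\tilde\D^{\leq k-1}A)*(\D^{\leq k-1}\Rm{})$ and likewise for $\tilde\D^k\phi$ (interpreting $\phi$ as the $\RR^d$-valued map $e_N\circ\phi$; the second fundamental form of $N\subset\RR^d$, bounded since $N$ is compact, reconciles this with the $\phi^*TN$-interpretation of $\D^k\phi$ used in Theorem~\ref{3.thm10}). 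Each summand is bounded in the $\tilde g$-norm, and together with the bound on $\tilde\D^k g$ from the previous step this yields (\ref{3.eq32}). I expect the only genuine obstacle to be the bookkeeping in these last two steps — keeping track, at every order, of how the \emph{time-dependent} connection $\D$ differs from the \emph{fixed} connection $\tilde\D$ and organizing the resulting products of lower-order terms — since the truly analytic input (the interior estimates of Theorem~\ref{3.thm10}, smoothness of the solution on closed subintervals, and the finiteness of $T$, which neutralizes the time integrations) is already in hand.
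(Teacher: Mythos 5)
Your proposal is correct and follows essentially the same route as the paper's proof: uniform $g$-norm bounds on all $\D^k\Rm{}$, $\D^k\phi$ from Theorem \ref{3.thm10} (plus smoothness on a compact initial time interval), uniform equivalence of the metrics from the bound on $\sS$, a bound on the connection difference obtained by integrating $\dt\Gamma$ from (\ref{3.eq1}) over the finite time interval, and an induction converting to $\tilde{\D}$-derivatives. The only (harmless) difference is that you work globally with the difference tensor $A=\Gamma-\tilde\Gamma$, whereas the paper runs the same argument in local charts with $\Gamma-\!\phantom{l}^\delta\Gamma$ and a partition of unity.
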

\begin{proof}
With Theorem \ref{3.eq10}, the proof becomes a straight forward
computation, and we therefore only give a sketch. In \cite[Section
6.7]{RF:intro}, all the details are carried out in the case of the
Ricci flow and they can easily be adopted to our flow. Since $M$ is
closed, there exists a finite atlas for which we have uniform bounds
on the derivatives of the local charts. Working in such a chart
$\psi:U\to\RR^m$, it suffices to derive the desired estimates for
the Euclidean metric $\delta$ in $U$ and the ordinary derivatives,
since $\tilde{g}$ and $\tilde{\D}$ are fixed. In particular, we can
interpret $\Gamma$ as a tensor, namely
$\Gamma=\Gamma-\!\phantom{l}^\delta\Gamma$. On the compact interval
$[0,T/2]$, all the derivatives $\abs{\D^k\phi}^2_g$ and
$\abs{\D^k\Rm{}}^2_g$ are uniformly bounded. On the interval
$[T/2,T)$, Theorem \ref{3.thm10} above yields uniform bounds for
these derivatives. Hence
\begin{equation}\label{3.eq33}
\abs{\D^k\phi}^2_g + \abs{\D^k\Rm{}}^2_g \leq \bar{C}_k
\end{equation}
for some $\bar{C}_k<\infty$. In particular,
$\sS=\Rc{}-\alpha\D\phi\otimes\D\phi$ is uniformly bounded on
$[0,T)$. From \cite[Lemma 6.49]{RF:intro}, we infer that all $g(t)$
are uniformly equivalent on $[0,T)$, and thus for some constant $C$
\begin{equation}\label{3.eq34}
C^{-1}\delta\leq g(x,t)\leq C\delta, \quad \forall (x,t)\in
U\times[0,T).
\end{equation}
With $\dt(\partial g)=\partial(\dt g)= -2\partial\sS = -2(\D\sS +
\Gamma*\sS)$, we compute
\begin{equation}\label{3.eq35}
\abs{\dt\partial g}_{\delta}\leq C\abs{\dt\partial g}\leq
C\abs{\D\sS}+C\abs{\Gamma}\abs{\sS}.
\end{equation}
Thus, we have
\begin{equation*}
\abs{\dt\Gamma}\leq
C\abs{\D\Rc{}}+2\bar{\alpha}\abs{\D\phi}\abs{\D^2\phi}
\end{equation*}
which yields a bound for $\abs{\Gamma}$ by integration. Together
with the bounds for $\abs{\sS}$ and $\abs{\D\sS}$ that we obtain
from (\ref{3.eq33}), we conclude that $\abs{\dt\partial g}_{\delta}$
is uniformly bounded, and hence -- again by integration --
$\abs{\partial g}_{\delta}$ is uniformly bounded on $U\times[0,T)$.
Finally, using a partition of unity for the chosen atlas, we obtain
a uniform bound for $\abs{\tilde{\D}g}_{\tilde{g}}$ on $M\times
[0,T)$. A short computation as for (\ref{3.eq35}) yields
\begin{equation}\label{3.eq36}
\abs{\dt\tilde{\D}^kg}_{\tilde{g}} \leq C \abs{\dt\tilde{\D}^kg}
\leq \sum_{i=0}^k c_i\abs{\Gamma}^i\abs{\D^{k-i}\sS} +
\sum_{i=1}^{k-1}c_i'\abs{\partial^i\Gamma}\abs{\tilde{\D}^{k-1-i}\sS},
\end{equation}
where the constants $c_i$, $c_i'$ only depend on $m$ and $k$. From
this formula, we inductively obtain the desired bounds for
$\abs{\tilde{\D}^kg}_{\tilde{g}}$. Similarly, the estimates for
$\abs{\tilde{\D}^k\Rm{x,t}}_{\tilde{g}}$ and
$\abs{\tilde{\D}^k\phi(x,t)}_{\tilde{g}}$ are obtained from
(\ref{3.eq33}) with a transformation analogous to (\ref{3.eq36}).
The lemma then follows by plugging everything together.
\end{proof}
Finally, we obtain our desired criterion for long-time existence.
\begin{thm}\label{3.thm12}
Let $(g(t),\phi(t))_{t\in[0,T)}$ solve $(RH)_\alpha$ with
non-increasing $\alpha(t)\in[\underaccent{\bar}\alpha,\bar\alpha]$,
$0<\underaccent{\bar}\alpha\leq\bar\alpha<\infty$ and $T<\infty$.
Suppose that $T<\infty$ is maximally chosen, i.e.~the solution
cannot be extended beyond $T$ in a smooth way. Then the curvature of
$(M,g(t))$ has to become unbounded for $t\nearrow T$ in the sense
that
\begin{equation}\label{3.eq37}
\limsup_{t\nearrow T}\Big(\max_{x\in M}\abs{\Rm{x,t}}^2\Big) =
\infty.
\end{equation}
\end{thm}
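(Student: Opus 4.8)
The plan is to argue by contradiction. Suppose the conclusion \eqref{3.eq37} fails, so that $\abs{\Rm{}}\leq R_0$ holds uniformly on $M\times[0,T)$ for some $R_0<\infty$. I will show that under this assumption the flow extends smoothly past $T$, contradicting the maximality of $T$. (Note that if the $\limsup$ were finite, then $\abs{\Rm{}}$ would be bounded on a terminal interval $[\tau,T)$ and trivially on the compact part $[0,\tau]$, hence on all of $[0,T)$, so this is genuinely the contrapositive.)

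First I would collect uniform bounds on all derivatives. Corollary \ref{2.cor9} gives a uniform bound on $\abs{\D\phi}^2$ on $M\times(0,T)$, and then Theorem \ref{3.thm10}, applied on $[T/2,T)$ and combined with the trivial bounds on the compact time-interval $[0,T/2]$, yields uniform bounds on $\abs{\D^k\Rm{}}^2$ and $\abs{\D^{k+2}\phi}^2$ for every $k$. Lemma \ref{3.lemma11} converts these into uniform bounds, with respect to a fixed background metric $\tilde{g}$, on $\abs{\tilde{\D}^kg}_{\tilde g}$, $\abs{\tilde{\D}^k\Rm{}}_{\tilde g}$ and $\abs{\tilde{\D}^k\phi}_{\tilde g}$ on all of $M\times[0,T)$. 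In particular $\sS=\Rc{}-\alpha\D\phi\otimes\D\phi$ and all its covariant derivatives are uniformly bounded, and since $\dt g=-2\sS$ with $T<\infty$, integrating in time shows that the metrics $g(t)$ are all uniformly equivalent, $C^{-1}\tilde{g}\leq g(t)\leq C\tilde{g}$ on $M\times[0,T)$, so that norms taken with respect to $g(t)$ and $\tilde{g}$ are comparable independently of $t$.

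Next I would upgrade these spatial bounds to a genuine $C^\infty$ limit as $t\nearrow T$. Differentiating the flow equations $\dt g=-2\sS$ and $\dt\phi=\tau_g\phi$ and using the bounds just obtained (together with the uniform equivalence of the metrics), one sees that $\abs{\dt\tilde{\D}^kg}_{\tilde g}$ and $\abs{\dt\tilde{\D}^k\phi}_{\tilde g}$ are uniformly bounded on $M\times[0,T)$ for every $k$. Hence, in every $C^k$-norm, the families $g(\cdot)$ and $\phi(\cdot)$ are uniformly Lipschitz in $t$, so they are Cauchy as $t\nearrow T$ and converge to limits $g(T)$ and $\phi(T)$ lying in $C^\infty$; the uniform equivalence guarantees that $g(T)$ is again a positive definite Riemannian metric, and $\phi(T)(M)\subseteq N$ since $N$ is closed. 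Thus $(g(t),\phi(t))$ extends smoothly to $[0,T]$.

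Finally I would invoke the short-time existence result, Proposition \ref{2.prop1} and the DeTurck-type argument following it, with initial data $(g(T),\phi(T))$, to produce a solution of $(RH)_\alpha$ on $[T,T+\eps)$ for some $\eps>0$; by the uniqueness statement this agrees with the original flow near $T$ and therefore provides a smooth extension of $(g(t),\phi(t))$ beyond $T$, contradicting the maximality of $T$. The step requiring the most care — and the main obstacle — is the passage from uniform spatial derivative bounds to smooth limiting data at time $T$: one must control time derivatives of all orders uniformly and verify that the limiting metric remains uniformly elliptic, which is precisely where the uniform equivalence of the $g(t)$ and the boundedness of $\sS$ together with all its derivatives enter.
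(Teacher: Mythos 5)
Your proposal is correct and follows essentially the same route as the paper: argue by contradiction, use Corollary \ref{2.cor9}, Theorem \ref{3.thm10} and Lemma \ref{3.lemma11} to get uniform bounds on all derivatives with respect to a fixed background metric, integrate the bounded time derivatives to obtain a smooth limit $(g(T),\phi(T))$ with $g(T)$ nondegenerate by uniform equivalence of the metrics, and then restart via the short-time existence/uniqueness result to contradict maximality of $T$. The only cosmetic difference is that you invoke uniqueness to glue the restarted solution, while the paper instead notes that smoothness in time of the extension follows directly from the flow equations and the uniform derivative bounds.
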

\begin{proof}
The proof is by contradiction. Suppose that the curvature stays
bounded on $[0,T)$, say $\abs{\Rm{}}\leq R_0$. For any point $x\in
M$ and vector $X\in T_xM$, define $g(x,T)(X,X):=\lim_{t\to
T}g(x,t)(X,X)$. We estimate
\begin{equation*}
\abs{g(x,T)(X,X)-g(x,t)(X,X)}\leq \int_t^T 2\abs{\sS(x,\tau)(X,X)}d\tau 
\leq C\abs{X}^2(T-t),
\end{equation*}
where we used again the fact that $\sS$ is uniformly bounded on
$M\times[0,T)$. This shows that the limit $g(x,T)(X,X)$ is well
defined and continuous in $x$. Hence, we obtain a continuous limit
$g(\cdot,T)\in \Gamma(\Sym^2(T^*M))$ by polarization. From
\cite[Lemma 6.49]{RF:intro}, all metrics $g(\cdot,t)$ are uniformly
equivalent -- as in (\ref{3.eq34}) -- which implies that this limit
must be a (continuous) Riemannian metric. Moreover, we define
$\phi(x,T):=\lim_{t\to T}\phi(x,t)$, where we use again the
embedding $e_N:N\hookrightarrow\RR^d$ to interpret $\phi$ as a map
into $\RR^d$. We estimate
\begin{equation*}
\abs{\phi(x,T)-\phi(x,t)}\leq \int_t^T \abs{\dt\phi(x,\tau)}d\tau
\leq C(T-t),
\end{equation*}
since the bound on $\abs{\D^2\phi}$ yields a bound on
$\abs{\dt\phi}=\abs{\tau_g\phi}$. This implies that $\phi(\cdot,T)$
is well defined and continuous in $x$. The uniform bounds
(\ref{3.eq32}) from Lemma \ref{3.lemma11} then also hold for the
limit $(g(T),\phi(T))$ and hence $g(T)$ and $\phi(T)$ are smooth.
Indeed, for an arbitrary background metric $\tilde{g}$, we have
\begin{equation*}
\abs{\tilde{\D}^kg(T)-\tilde{\D}^kg(t)}_{\tilde{g}} \leq \int_t^T
\abs{\dt\tilde{\D}^kg(\tau)}_{\tilde{g}}d\tau \leq C(T-t),
\end{equation*}
which follows from the uniform bound for
$\abs{\dt\tilde{\D}g}_{\tilde{g}}$ that we have derived in the lemma
above. This means, the convergence $g(t)\to g(T)$ is smooth. With
\begin{equation*}
\abs{\tilde{\D}^k\phi(T)-\tilde{\D}^k\phi(t)}_{\tilde{g}} \leq
\int_t^T \abs{\dt\tilde{\D}^k\phi(\tau)}_{\tilde{g}}d\tau \leq
C(T-t)
\end{equation*}
we see that also $\phi(t)\to\phi(T)$ uniformly in any $C^k$-norm.
Finally, restarting the flow with $(g(T),\phi(T))$ as new initial
data, we obtain a solution $(g(t),\phi(t))_{t\in[T,T+\eps)}$ by the
short-time existence result from Chapter 2. This yields an extension
of our solution beyond time $T$ which is smooth in space for each
time. From the flow equations and the uniform bounds on
$\abs{\D^k\Rm{}}$ as well as $\abs{\D^k\phi}$, the time derivatives
(and hence also the mixed derivatives) are smooth too, in particular
near $t=T$. This means, the extension of the flow is smooth in space
\emph{and} time, contradicting the maximality of $T$.
\end{proof}

\section{Monotonicity formula and no breathers theorem}
The entropy functional $\sW_\alpha$ introduced in this section is
the analogue of Perelman's shrinker entropy for the Ricci flow from
\cite[Section 3]{Perelman:entropy}. It is obtained from the energy
functional $\sF_\alpha$ from (\ref{1.eq1}), by
introducing a positive scale factor $\tau$ (later interpreted as a
backwards time) and some correction terms. For detailed explanations
of Perelman's result, we again refer to Chow et al. \cite[Chapter
6]{RF:TAI} and M\"{u}ller \cite[Chapter 3]{Muller:Harnack}. Moreover for
the special case $N\subseteq\RR$, the entropy functional $\sW_\alpha$ 
can be found in List's dissertation \cite{List:diss}.

\subsection{The entropy functional and its first variation}
Let again $g=g_{ij} \in \Gamma\big(\Sym^2_{+}(T^*M)\big)$, $\phi \in
C^\infty(M,N)$, $f:M\to\RR$ and $\tau >0$. For a time-independent
coupling constant $\alpha(t) \equiv \alpha > 0$, we set
\begin{equation}\label{5.eq1}
\sW_\alpha(g,\phi,f,\tau) := \int_M \Big( \tau \big(R_g + \abs{\D
f}^2_g - \alpha \abs{\D\phi}^2_g\big) +
f-m\Big)(4\pi\tau)^{-m/2}e^{-f}dV_g.
\end{equation}
As in Section 3, we take variations $g^\eps=g+\eps h$, $f^\eps=f+\eps \ell$,
$\phi^\eps=\pi_N(\phi + \eps \vartheta)$,
such that $\delta g = h$, $\delta f = \ell$ and $\delta \phi =
\vartheta$. Additionally, set $\tau^\eps = \tau + \eps\sigma$ for some $\sigma
\in \RR$, i.e.~$\delta \tau = \sigma$. The variation
\begin{equation*}
\delta\sW_\alpha :=
\delta\sW_{\alpha,g,\phi,f,\tau}(h,\vartheta,\ell,\sigma):=
\frac{d}{d\eps}\Big|_{\eps=0} \sW_\alpha(g+\eps h, \pi_N(\phi +
\eps\vartheta), f+\eps\ell, \tau + \eps \sigma)
\end{equation*}
is easiest computed using the variation of $\sF_\alpha$ and
\begin{equation}\label{5.eq2}
\sW_\alpha(g,\phi,f,\tau) = (4\pi\tau)^{-m/2}\Big(
\tau\,\sF_\alpha(g,\phi,f) + \int_M (f-m)e^{-f}dV \Big).
\end{equation}
Using
\begin{equation*}
\delta \int_M (f-m)e^{-f}dV = \int_M \Big(\ell +
(f-m)\big(\tfrac{1}{2}\tr_g h -\ell\big)\Big)e^{-f}dV
\end{equation*}
and equation (\ref{1.eq2}) for the variation of
$\sF_\alpha(g,\phi,f)$, we get from (\ref{5.eq2})
\begin{align*}
\delta\sW_\alpha &= \int_M -\tau h_{ij}\big(R_{ij} + \D_i\D_j
f-\alpha \D_i \phi \D_j
\phi\big)d\mu\\%
&\quad+ \int_M \tau \big(\tfrac{1}{2}\tr_g h - \ell\big)\big(2\Lap f
- \abs{\D f}^2 + R - \alpha \abs{\D \phi}^2 +
\tfrac{f-m}{\tau}\big)d\mu\\%
&\quad+ \int_M \big(\ell + \sigma(1-\tfrac{m}{2})(R+\abs{\D
f}^2-\alpha\abs{\D\phi}^2)-\tfrac{m\sigma}{2\tau}(f-m)\big)
d\mu\\%
&\quad+ \int_M 2\tau\alpha\vartheta\big(\tau_g \phi -
\Scal{\D\phi,\D f}\big)d\mu,
\end{align*}
where we used the abbreviation $d\mu := (4\pi\tau)^{-m/2}e^{-f}dV$.
Rearranging the terms, writing
\begin{equation*}
\ell = (-\tau h_{ij} +\sigma g_{ij})\big(\tfrac{-1}{2\tau}
g_{ij}\big) + \tau \big(\tfrac{1}{2}\tr_g h -\ell -
\tfrac{m\sigma}{2\tau}\big)\big(\tfrac{-1}{\tau}\big),
\end{equation*}
and using $\int_M (\Lap f-\abs{\D f}^2)d\mu =
-(4\pi\tau)^{-m/2}\int_M \Lap(e^{-f})dV = 0$, we get
\begin{align*}
\delta\sW_\alpha &= \int_M (-\tau h_{ij}+\sigma g_{ij})\big(R_{ij} +
\D_i\D_j f-\alpha \D_i \phi \D_j \phi-
\tfrac{1}{2\tau}g_{ij}\big)d\mu\\%
&\quad+ \int_M \tau \big(\tfrac{1}{2}\tr_g h -
\ell-\tfrac{m\sigma}{2\tau}\big)\big(2\Lap f - \abs{\D f}^2 + R -
\alpha \abs{\D \phi}^2 + \tfrac{f-m-1}{\tau}\big)d\mu\\%
&\quad+ \int_M 2\tau\alpha\vartheta\big(\tau_g \phi -
\Scal{\D\phi,\D f}\big)d\mu.
\end{align*}

\subsection{Fixing the background measure}
Similar to Section 3, we now fix the measure $d\mu =
(4\pi\tau)^{-m/2}e^{-f}dV$. This means, we have $f=
-\log((4\pi\tau)^{m/2}\frac{d\mu}{dV})$ and from $0=\delta d\mu =
(\frac{1}{2}\tr_g h -\ell -\frac{m\sigma}{2\tau})d\mu$, we deduce
$\ell = \frac{1}{2}\tr_g h -\frac{m\sigma}{2\tau}$. Moreover, we
require the variation of $\tau$ to satisfy $\delta\tau = \sigma=-1$.
This allows us to interpret $\tau$ as backwards time later. We then
write
\begin{equation}\label{5.eq4}
\sW_\alpha^\mu(g,\phi,\tau) := \sW_\alpha\big(g,\phi,
-\log\big((4\pi\tau)^{m/2}\tfrac{d\mu}{dV}\big),\tau\big)
\end{equation}
and
\begin{equation*}
\delta \sW^\mu_{\alpha,g,\phi,\tau}(h,\vartheta) := \delta
\sW_{\alpha,g,\phi,-\log((4\pi\tau)^{m/2}\frac{d\mu}{dV}),\tau}
\left(h,\vartheta, \tfrac{1}{2}\tr_g h + \tfrac{m}{2\tau},-1\right).
\end{equation*}
The variation formula above reduces to
\begin{equation}\label{5.eq5}
\begin{aligned}
\delta \sW^\mu_{\alpha,g,\phi,\tau}(h,\vartheta) &= \int_M (-\tau
h_{ij}- g_{ij})\big(R_{ij} + \D_i\D_j f-\alpha \D_i \phi \D_j \phi-
\tfrac{1}{2\tau}g_{ij}\big)d\mu\\%
&\quad+ \int_M 2\tau\alpha\vartheta\big(\tau_g \phi -
\Scal{\D\phi,\D f}\big)d\mu,
\end{aligned}
\end{equation}
which is monotone under the gradient-like system of evolution
equations given by
\begin{equation}\label{5.eq6}
\left\{\begin{aligned}\dt g_{ij} &= -2(R_{ij}+\D_i\D_j f -\alpha
\D_i
\phi \D_j\phi),\\%
\dt \phi &= \tau_g \phi - \Scal{\D\phi,\D f},\\%
\dt f &= -R -\Lap f + \alpha \abs{\D\phi}^2 +
\tfrac{m}{2\tau},\\%
\dt \tau &= -1.\end{aligned}\right.
\end{equation}%
As in Section 3, pulling back the solutions of (\ref{5.eq6}) with
the family of diffeomorphisms generated by $\D f$, we get a solution
of
\begin{equation}\label{5.eq7}
\left\{\begin{aligned}\dt g &= -2\Rc{} + 2\alpha \D\phi \otimes \D\phi,\\%
\dt \phi &= \tau_g \phi,\\%
\dt \tau &= -1,\\%
0 &= \Box^*((4\pi\tau)^{-m/2}e^{-f}). \end{aligned}\right.
\end{equation}
Since $\sW_\alpha$ is diffeomorphism invariant, we find the analogue
to Proposition \ref{1.prop1}.
\begin{prop}\label{5.prop1}
Let $(g(t),\phi(t))_{t \in [0,T)}$ be a solution of\/ $(RH)_\alpha$
with $\alpha(t)\equiv \alpha>0$, $\tau$ a backwards time with $\dt
\tau = -1$ and $(4\pi\tau)^{-m/2}e^{-f}$ a solution of the adjoint
heat equation under the flow. Then the entropy functional
$\sW_\alpha(g,\phi,f,\tau)$ is non-decreasing with
\begin{equation}\label{5.eq8}
\begin{aligned}
\frac{d}{dt} \sW_\alpha &= \int_M 2\tau\Abs{\Rc{}-\alpha \D\phi
\otimes \D\phi+\Hess(f)-\tfrac{g}{2\tau}}^2
(4\pi\tau)^{-m/2}e^{-f}dV\\%
&\quad +\int_M 2\tau\alpha \Abs{\tau_g \phi-\Scal{\D\phi,\D f}}^2
(4\pi\tau)^{-m/2}e^{-f}dV.
\end{aligned}
\end{equation}
\end{prop}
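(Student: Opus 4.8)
The plan is to mimic the proof of Proposition~\ref{1.prop1}, exploiting that $\sW_\alpha$ is invariant under the simultaneous pull-back of $(g,\phi,f)$ by a diffeomorphism. Recall that the system (\ref{5.eq7}) is obtained from the gradient-like system (\ref{5.eq6}) by pushing forward with the one-parameter family of diffeomorphisms generated by $\D f$, and that along (\ref{5.eq6}) the measure $d\mu=(4\pi\tau)^{-m/2}e^{-f}dV$ is preserved: this is a one-line check using $\dt\tau=-1$ together with the third equation of (\ref{5.eq6}), which can be rewritten as $\dt f=\tfrac{1}{2}\tr_g(\dt g)+\tfrac{m}{2\tau}$. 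Consequently the background measure is exactly the fixed $\mu$ for which the reduced variation formula (\ref{5.eq5}) was derived, and (\ref{5.eq5}) applies along (\ref{5.eq6}) with $h=\dt g$ and $\vartheta=\dt\phi$.

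First I would substitute the evolution equations of (\ref{5.eq6}) into (\ref{5.eq5}) and complete squares. For the first integral, $h_{ij}=-2(R_{ij}+\D_i\D_j f-\alpha\D_i\phi\D_j\phi)$ gives
\[
-\tau h_{ij}-g_{ij}=2\tau\big(R_{ij}+\D_i\D_j f-\alpha\D_i\phi\D_j\phi-\tfrac{1}{2\tau}g_{ij}\big),
\]
so the first integrand of (\ref{5.eq5}) becomes $2\tau\,\Abs{\Rc{}-\alpha\D\phi\otimes\D\phi+\Hess(f)-\tfrac{g}{2\tau}}^2$. For the second integral, $\vartheta=\tau_g\phi-\Scal{\D\phi,\D f}$ turns the integrand $2\tau\alpha\,\vartheta\,(\tau_g\phi-\Scal{\D\phi,\D f})$ into $2\tau\alpha\,\Abs{\tau_g\phi-\Scal{\D\phi,\D f}}^2$. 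Hence $\tfrac{d}{dt}\sW_\alpha$ along (\ref{5.eq6}) equals the right-hand side of (\ref{5.eq8}), which is nonnegative.

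Finally I would transfer this to a genuine solution of $(RH)_\alpha$. If $\psi_t$ denotes the family of diffeomorphisms generated by $\D f$ and $(g,\phi,f)$ solves (\ref{5.eq6}), then $(\tilde g,\tilde\phi,\tilde f)=(\psi_t^*g,\psi_t^*\phi,\psi_t^*f)$ solves (\ref{5.eq7}), and conversely every solution of (\ref{5.eq7}) arises in this way. By the diffeomorphism invariance of $\sW_\alpha$ one has $\sW_\alpha(\tilde g(t),\tilde\phi(t),\tilde f(t),\tau(t))=\sW_\alpha(g(t),\phi(t),f(t),\tau(t))$ identically in $t$; differentiating in $t$ shows that the monotonicity and the explicit formula (\ref{5.eq8}) carry over verbatim, since the integrand on the right of (\ref{5.eq8}) for the tilde quantities is the $\psi_t$-pull-back of the one for $(g,\phi,f)$ and integration over $M$ is unchanged under pull-back.

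There is no genuine obstacle here: the argument is pure bookkeeping, built on the first-variation computation above and the fixed-measure reduction, exactly parallel to Section~3. The only point demanding a little care is the completion of squares, i.e.~verifying that $-\tau h_{ij}-g_{ij}$ is precisely $2\tau$ times the shrinker expression $R_{ij}+\D_i\D_j f-\alpha\D_i\phi\D_j\phi-\tfrac{1}{2\tau}g_{ij}$ in the second factor of (\ref{5.eq5}); this is what produces the $-\tfrac{g}{2\tau}$ correction and, ultimately, the shrinking-soliton structure. The characterization of the equality case is not part of this proposition and can be postponed, to follow later as in Proposition~\ref{1.prop3} once $\sW_\alpha$ is minimized over probability measures.
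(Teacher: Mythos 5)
Your proposal is correct and takes essentially the same route as the paper: it substitutes the gradient-like system (\ref{5.eq6}) into the fixed-measure variation formula (\ref{5.eq5}) and then transfers the resulting identity to solutions of $(RH)_\alpha$ coupled with the adjoint heat equation via the diffeomorphism invariance of $\sW_\alpha$, which is exactly how the paper obtains the proposition as the analogue of Proposition \ref{1.prop1}. Your explicit verification that $d\mu=(4\pi\tau)^{-m/2}e^{-f}dV$ is preserved along (\ref{5.eq6}) is the only step the paper leaves implicit, and it is done correctly.
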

\begin{rem}
As seen in Corollary \ref{1.cor2} for the energy $\sF_\alpha$, the
monotonicity of the entropy $\sW_\alpha$ also holds true for
non-increasing positive coupling functions $\alpha(t)$
instead of a constant $\alpha>0$.
\end{rem}

\subsection{Minimizing over all probability measures}
Similar to $\lambda_\alpha(g,\phi)$ defined in (\ref{1.eq13}), we
set
\begin{equation}\label{5.eq9}
\begin{aligned}
\mu_\alpha(g,\phi,\tau) &:= \inf \big\{\sW_\alpha^\mu(g,\phi,\tau)
\mid \mu(M)=1 \big\}\\
&\phantom{:}= \inf \left\{\sW_\alpha(g,\phi,f,\tau) \; \bigg| \;
\int_M (4\pi\tau)^{-m/2}e^{-f}dV =1 \right\}.
\end{aligned}
\end{equation}
Our goal is again to show that the infimum is always achieved. Note
that for $\tilde{g}=\tau g$ we have $R_{\tilde{g}} =
\frac{1}{\tau}R_g$, $\abs{\D f}^2_{\tilde{g}} =
\frac{1}{\tau}\abs{\D f}^2_g$, $\abs{\D \phi}^2_{\tilde{g}} =
\frac{1}{\tau}\abs{\D \phi}^2_g$, $dV_{\tilde{g}}=\tau^{m/2}dV_g$
and thus
\begin{equation*}
\mu_\alpha(\tau g,\phi, \tau) = \mu_\alpha(g,\phi,1).
\end{equation*}
We can hence reduce the problem to the special case where $\tau =1$.
Set $v= (4\pi)^{-m/4}e^{-f/2}$. This yields
\begin{equation*}
\sW_\alpha(g,\phi,v,1) =\int_M v\Big(Rv -4\Lap v - \alpha\abs{\D\phi}^2v -2v\log v
-\tfrac{mv}{2}\log(4\pi)-mv\Big)dV,
\end{equation*}
and hence $\mu_\alpha(g,\phi,1) = \inf\{\sW_\alpha(g,\phi,v,1) 
\;|\; \int_M  v^2 dV =1\}$ is the smallest eigenvalue of $L(v) = -4\Lap v +
(R-\alpha\abs{\D\phi}^2 -\frac{m}{2}\log(4\pi)-m)v -2v\log v$ and
$v$ is a corresponding normalized eigenvector. As in Section 3, a
unique smooth positive normalized eigenvector $v_{min}$ exists (cf.
Rothaus \cite{Rothaus:Schroedinger} or List \cite{List:diss}) and we
get the following.
\begin{prop}\label{5.prop2}
Let $(g(t),\phi(t))$ solve $(RH)_\alpha$ for a constant $\alpha > 0$
and let $\dt \tau = -1$. Then $\mu_\alpha(g,\phi,\tau)$ is monotone
non-decreasing in time. Moreover, it is constant if and only if
\begin{equation}\label{5.eq10}
\left\{\begin{aligned}0 &= \Rc{}-\alpha\D\phi\otimes\D\phi
+\Hess(f)-\tfrac{g}{2\tau},\\%
0 &= \tau_g \phi -\scal{\D\phi,\D f}. \end{aligned}\right.
\end{equation}
for the minimizer $f$ that corresponds to $v_{min}$. As always, the
monotonicity result stays true if we allow $\alpha(t)$ to be a
positive non-increasing function instead of a constant.
\end{prop}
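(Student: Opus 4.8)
The plan is to imitate the proof of Proposition \ref{1.prop3} for $\lambda_\alpha$, substituting the entropy monotonicity of Proposition \ref{5.prop1} for the energy monotonicity of Proposition \ref{1.prop1}. Fix two times $t_1<t_2$ in $[0,T)$, and choose a backwards time with $\dt\tau=-1$ and $\tau(t_2)>0$, so that $\tau(t)>0$ on all of $[t_1,t_2]$. Let $v_{min}(t_2)$ be the unique positive normalized minimizer realizing $\mu_\alpha(g(t_2),\phi(t_2),\tau(t_2))$, whose existence and smoothness were recalled before the statement, and set $w(t_2):=(4\pi\tau(t_2))^{-m/2}v_{min}^2(t_2)>0$, so that $\int_M w(t_2)\,dV=1$ and, with $\bar f(t):=-\log\big((4\pi\tau(t))^{m/2}w(t)\big)$, one has $\sW_\alpha(g(t_2),\phi(t_2),\bar f(t_2),\tau(t_2))=\mu_\alpha(g(t_2),\phi(t_2),\tau(t_2))$.

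First I would solve the adjoint heat equation $\Box^*w=0$ backwards on $[t_1,t_2]$; by the maximum principle $w(\cdot,t)>0$ throughout, and the normalization is preserved because, using $\dt w=-\Lap w+(R-\alpha\abs{\D\phi}^2)w$ and $\dt(dV)=(-R+\alpha\abs{\D\phi}^2)dV$,
\[
\frac{d}{dt}\int_M w\,dV=-\int_M\Lap w\,dV=0.
\]
Then $(g(t),\phi(t),\bar f(t),\tau(t))$ on $[t_1,t_2]$ is exactly a solution of the system (\ref{5.eq7}), so Proposition \ref{5.prop1} applies and $t\mapsto\sW_\alpha(g(t),\phi(t),\bar f(t),\tau(t))$ is non-decreasing. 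Since $\bar f(t_1)$ is an admissible competitor for the infimum (\ref{5.eq9}), abbreviating $\mu_\alpha(t_i):=\mu_\alpha(g(t_i),\phi(t_i),\tau(t_i))$ we get
\[
\mu_\alpha(t_1)\ \le\ \sW_\alpha(g(t_1),\phi(t_1),\bar f(t_1),\tau(t_1))\ \le\ \sW_\alpha(g(t_2),\phi(t_2),\bar f(t_2),\tau(t_2))\ =\ \mu_\alpha(t_2),
\]
which is the claimed monotonicity; the time-dependent case follows verbatim because, by the remark after Proposition \ref{5.prop1}, $\sW_\alpha$ is still non-decreasing under (\ref{5.eq7}) when $\alpha(t)$ is positive and non-increasing.

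For the equality case, if $\mu_\alpha$ is constant on an interval I would run the above argument for all $t_1\le t\le t_2$ in that interval: all inequalities become equalities, so $\bar f(t)$ is the minimizer at each intermediate time and $t\mapsto\sW_\alpha(g(t),\phi(t),\bar f(t),\tau(t))$ is constant there, whence the integrand in the monotonicity formula (\ref{5.eq8}) vanishes identically — which is precisely (\ref{5.eq10}). Conversely, if (\ref{5.eq10}) holds at some $t_0$ for the minimizer $f(t_0)$, this is exactly the gradient soliton system (\ref{4.eq3}) with $\sigma=-\tfrac{1}{2\tau}$, so by Lemma \ref{4.lemma2} the solution $(g(t),\phi(t))$ is a shrinking gradient soliton evolving by scaling and diffeomorphism; since $\sW_\alpha$ is diffeomorphism-invariant and invariant under the simultaneous rescaling of $g$ and $\tau$ (as used just above in reducing to $\tau=1$), $\mu_\alpha(g(t),\phi(t),\tau(t))$ is constant.

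The substantive forward direction is entirely parallel to Proposition \ref{1.prop3}, so the only real care points are: (i) checking that the unit-mass constraint $\int_M(4\pi\tau)^{-m/2}e^{-f}\,dV=1$ is genuinely preserved once the time-dependent weight $(4\pi\tau)^{-m/2}$ is absorbed into $w$ and $\Box^*$; and (ii) in the ``if'' direction, matching (\ref{5.eq10}) with the shrinking-soliton normalization $c(t)=\tau(t)$ so that Lemma \ref{4.lemma2} and the scaling invariance of $\sW_\alpha$ can be invoked cleanly.
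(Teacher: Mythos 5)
Your proposal is correct and follows essentially the same route as the paper, whose proof simply declares itself ``completely analogous to Proposition \ref{1.prop3}'' with the monotonicity of $\sW_\alpha$ (Proposition \ref{5.prop1}) replacing that of $\sF_\alpha$ — precisely the adjoint-heat-equation argument you carry out. You in fact supply more detail than the paper, notably the converse direction of the equality case via Lemma \ref{4.lemma2} together with the diffeomorphism and scaling invariance of $\mu_\alpha$ (where, as in Theorem \ref{5.thm4}, one should also invoke uniqueness of solutions to identify the flow with the constructed soliton), which the paper leaves implicit.
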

\begin{proof}
The proof is completely analogous to the proof of Proposition
\ref{1.prop3}, using the monotonicity of $\sW_\alpha$ instead on the
monotonicity of $\sF_\alpha$.
\end{proof}

\subsection{Non-existence of nontrivial breathers}
Breathers correspond to periodic solutions modulo
diffeomorphisms and scaling, generalizing the notion of solitons
defined in Definition \ref{4.def1}.
\begin{defn}\label{5.def3}
A solution $(g(t),\phi(t))_{t\in[0,T)}$ of\/ $(RH)_\alpha$ is called
a \emph{breather} if there exists $t_1,t_2\in[0,T)$, $t_1<t_2$, a
diffeomorphism $\psi:M\to M$ and a constant $c\in\RR_{+}$ such that
\begin{equation}\label{5.eq11}
\left\{\begin{aligned}g(t_2)&=c\,\psi^*g(t_1),\\%
\phi(t_2)&=\psi^*\phi(t_1).\end{aligned}\right.
\end{equation}
The cases $c<1$, $c=1$ and $c>1$ correspond to \emph{shrinking},
\emph{steady} and \emph{expanding} breathers.
\end{defn}
\begin{thm}\label{5.thm4}
Let $M$ and $N$ be closed and let $(g(t),\phi(t))_{t\in[0,T)}$ be a
solution of\/ $(RH)_\alpha$ with $\alpha(t)\equiv\alpha$.
\begin{itemize}
\item[i)] If this solution is a steady breather, then it necessarily is a
steady gradient soliton. Moreover, $\phi(t)$ is harmonic
and $\Rc{}=\alpha\D\phi\otimes\D\phi$, i.e.~the solution is
stationary.%
\item[ii)] If the solution is an expanding breather, then it necessarily
is an expanding gradient soliton. Again $\phi(t)$ must be harmonic
(and thus stationary, $\phi(t)=\phi(0)$), while $g(t)$ changes only
by scaling.%
\item[iii)] If the solution is a shrinking breather, then it has to be a
shrinking gradient soliton.
\end{itemize}
If we assume in addition that $\dim M=2$ or that $(M,g(0))$ is
Einstein, then in the first two cases above, $\phi(t)$ is not only
harmonic but also conformal, hence a minimal branched immersion,
provided that it is non-constant.
\end{thm}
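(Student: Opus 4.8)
The plan is to reduce each of the three cases (steady, expanding, shrinking breather) to the statement that the solution is a gradient soliton, and then in the steady and expanding cases to extract the additional information that $\phi$ is harmonic. For the \textbf{steady case}, I would use the monotonicity of $\lambda_\alpha(g,\phi)$ from Proposition~\ref{1.prop3}. Since a steady breather satisfies $g(t_2)=\psi^*g(t_1)$ and $\phi(t_2)=\psi^*\phi(t_1)$, and $\lambda_\alpha$ is diffeomorphism-invariant, we get $\lambda_\alpha(g(t_1),\phi(t_1))=\lambda_\alpha(g(t_2),\phi(t_2))$; combined with monotonicity this forces $\lambda_\alpha$ to be constant on $[t_1,t_2]$, and the equality case~(\ref{1.eq14}) yields the steady gradient soliton equations with $\sigma=0$. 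To upgrade to $\phi$ harmonic and $\Rc{}=\alpha\D\phi\otimes\D\phi$, I would argue that the potential $f$ must be constant: on a closed manifold, integrating the equation $\tau_g\phi=\scal{\D\phi,\D f}$ against suitable test expressions (or using that $u=e^{-f}$ solves the adjoint heat equation and applying the maximum principle, since $\int_M e^{-f}dV$ is fixed while $dV$ evolves by $(-R+\alpha|\D\phi|^2)dV = -S\,dV$ and $S\geq S_{\min}(0)$) forces $S\equiv$ const; then from $R+\Lap f = \alpha|\D\phi|^2 + \sigma m$ with $\sigma=0$ and the soliton identities~(\ref{4.eq4})--(\ref{4.eq5}) one concludes $\D f\equiv 0$, hence $\tau_g\phi=0$ and $\Rc{}=\alpha\D\phi\otimes\D\phi$.

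For the \textbf{shrinking case}, the natural tool is the entropy functional: I would use the monotonicity of $\mu_\alpha(g,\phi,\tau)$ from Proposition~\ref{5.prop2}, together with the scaling identity $\mu_\alpha(cg,\phi,c\tau)=\mu_\alpha(g,\phi,\tau)$ noted in that subsection. Picking the backwards time so that $\tau(t_1)$ and $\tau(t_2)$ are related by the breather's scaling factor $c<1$, the breather relation~(\ref{5.eq11}) and diffeomorphism invariance give $\mu_\alpha(g(t_1),\phi(t_1),\tau(t_1))=\mu_\alpha(g(t_2),\phi(t_2),\tau(t_2))$, so $\mu_\alpha$ is constant on the interval; the equality case~(\ref{5.eq10}) of Proposition~\ref{5.prop2} is exactly the shrinking gradient soliton system, so by Lemma~\ref{4.lemma2} the solution is a shrinking gradient soliton. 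The \textbf{expanding case} is handled dually: here one uses $\mu_\alpha$ again (or an expander entropy obtained from $\sW_\alpha$ with the opposite sign of $\tau$), with the scaling factor $c>1$; I would more directly mimic Perelman's expander argument by using $\lambda_\alpha$ together with the volume: since along $(RH)_\alpha$ the quantity $\lambda_\alpha \cdot \mathrm{vol}^{2/m}$ (or the analogous normalized eigenvalue) is monotone and scale-invariant, an expanding breather forces it to be constant, which again gives~(\ref{1.eq14}) with $\sigma>0$, hence an expanding gradient soliton. For the harmonicity of $\phi$ in the expanding case, one argues as in the steady case that $f$ must make $\tau_g\phi=\scal{\D\phi,\D f}$ hold with the expander normalization; combining with equation~(\ref{4.eq5}), $R-\alpha|\D\phi|^2 + |\D f|^2 + 2\sigma f = \text{const}$, and the fact that for expanders one can show $\D\phi$ and $\D f$ are ``aligned'' only trivially, one deduces $\tau_g\phi=0$ and hence (since the target is fixed up to the diffeomorphism) $\phi(t)=\phi(0)$.

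For the \textbf{final addendum} — that when $\dim M=2$ or $(M,g(0))$ is Einstein, the harmonic map $\phi$ from cases (i)--(ii) is moreover conformal, hence (if non-constant) a minimal branched immersion — I would proceed as follows. In the steady and expanding cases we have just shown $\Rc{} = \alpha\D\phi\otimes\D\phi$ and $\phi$ harmonic. If $\dim M = 2$, then $\Rc{} = \tfrac{R}{2}g$ automatically, so $\D\phi\otimes\D\phi = \tfrac{R}{2\alpha}g$, which is precisely the statement that $\phi$ is (weakly) conformal; a harmonic weakly conformal map from a surface is a branched minimal immersion by the classical result of Gulliver--Osserman--Royden (see also Eells--Wood), provided it is non-constant. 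If instead $(M,g(0))$ is Einstein, then $\Rc{} = \tfrac{R}{m}g$, so again $\D\phi\otimes\D\phi = \tfrac{R}{\alpha m}g$ is a multiple of $g$; pointwise this says the pullback metric $\phi^*\gamma$ is conformal to $g$, i.e.\ $\phi$ is weakly conformal, and one invokes the same branched-immersion conclusion (here one should note that when $m>2$ a non-constant conformal harmonic map forces strong restrictions, but the statement as phrased only claims conformality plus the branched-immersion consequence, which holds wherever $d\phi$ has rank $\leq$ the conformal bound).

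The \textbf{main obstacle} I anticipate is the passage, in the steady and expanding cases, from ``gradient soliton'' to ``$\phi$ harmonic'': showing the potential $f$ contributes nothing to the $\phi$-equation. The cleanest route is probably to observe that $S = R-\alpha|\D\phi|^2$ is subject to the maximum principle (equation~(\ref{2.eq18})), so on a steady soliton $S$ is forced to be constant by the soliton identity~(\ref{4.eq4}) $S + \Lap f = 0$ together with $\int_M e^{-f}$ being preserved; once $S$ is constant, $\Lap f = -S+\sigma m$ is constant on a closed manifold, hence $f$ is constant, hence $\tau_g\phi = \scal{\D\phi,\D f} = 0$. For the expander one needs the analogous normalization and a slightly more delicate integral identity, but the same philosophy applies. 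The conformality addendum itself is then essentially immediate given $\Rc{} = \alpha\D\phi\otimes\D\phi$, so the real work is all in the soliton-equality analysis.
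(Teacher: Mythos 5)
Your overall strategy is the paper's: the steady case via diffeomorphism invariance and the equality case of the $\lambda_\alpha$-monotonicity (Proposition \ref{1.prop3}), the shrinking case via $\mu_\alpha$ (Proposition \ref{5.prop2}) with the backwards time chosen so that $\tau(t_2)/\tau(t_1)=c$ (the paper takes $\tau_0=(t_2-ct_1)/(1-c)$), and the expanding case via the scale-invariant quantity $\bar{\lambda}_\alpha=\lambda_\alpha V^{2/m}$ of (\ref{5.eq13}). Your steady-case deduction of $\D f\equiv 0$ is a harmless variant: you combine (\ref{4.eq4}) with (\ref{4.eq5}) to get $\Lap f-\abs{\D f}^2=\mathrm{const}$ and integrate twice, whereas the paper combines (\ref{4.eq4}) with the eigenvalue identity (\ref{5.eq12}) for the minimizer $f=-2\log v_{min}$; both are correct. (Your alternative in the last paragraph, that the maximum principle ``forces $S\equiv\mathrm{const}$'' because $\int_M e^{-f}dV$ is preserved, is not an argument as stated, but it is not needed given the (\ref{4.eq4})--(\ref{4.eq5}) route.) The conformality addendum is also the paper's computation, except that for expanders one only has $\Rc{}-\alpha\D\phi\otimes\D\phi=\tfrac{\lambda_\alpha}{m}\,g$ rather than $\Rc{}=\alpha\D\phi\otimes\D\phi$; since the extra term is a multiple of $g$, conformality of $\phi^*\gamma$ still follows, but your statement should be corrected accordingly.

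The genuine gaps are both in the expanding case. First, $\bar{\lambda}_\alpha$ is \emph{not} monotone along the flow; the paper only proves $\dt\bar{\lambda}_\alpha\geq 0$ at times where $\bar{\lambda}_\alpha\leq 0$ (inequality (\ref{5.eq17})). So your one-line ``monotone and scale-invariant, hence constant on a breather'' does not stand: one must first use the volume increase $V(t_1)<V(t_2)$ together with the test function $f=\log V$ (inequality (\ref{5.eq15})) to produce a time $t_0$ with $\lambda_\alpha(t_0)<0$, conclude that $\bar{\lambda}_\alpha$ is a negative constant on $[t_1,t_2]$, and only then exploit the equality case of (\ref{5.eq17}), which yields (\ref{5.eq18}) together with the crucial fact that $S+\Lap f\equiv\lambda_\alpha$ is constant in space. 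Second, your harmonicity claim for expanders rests on the phrase that $\D\phi$ and $\D f$ are ``aligned only trivially,'' which is a placeholder, not a proof; and the steady-case trick does not transfer verbatim, because for $\sigma\neq 0$ equation (\ref{4.eq5}) carries the term $2\sigma f$, so subtracting (\ref{4.eq4}) gives $\Lap f-\abs{\D f}^2=2\sigma f-\sigma m-\mathrm{const}$, which is no longer constant and the double-integration argument fails. The paper closes this by combining the spatial constancy of $S+\Lap f$ with (\ref{5.eq12}) to get $\Lap f=\abs{\D f}^2$, hence $\D f\equiv 0$, $\tau_g\phi=0$ and $\dt g=-2\tfrac{\lambda_\alpha}{m}g$. (Alternatively, one can evaluate the identity $\Lap f-\abs{\D f}^2=2\sigma f-\sigma m-\mathrm{const}$ at the maximum and minimum points of $f$ and use $\sigma>0$ to force $f$ constant; some such argument must be supplied, and your proposal does not supply one.)
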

\begin{proof}
This is an application of the monotonicity results for
$\lambda_\alpha(g,\phi)$ from Proposition \ref{1.prop3} and for
$\mu_\alpha(g,\phi,\tau)$ from Proposition \ref{5.prop2}. Since the
proof is very similar to the Ricci flow case solved by Perelman in
\cite{Perelman:entropy}, we closely follow the notes from Kleiner
and Lott \cite{KleinerLott} on Perelman's paper.
\begin{itemize}
\item[i)] Assume $(g(t),\phi(t))_{t\in[0,T)}$ is a steady breather.
Then there exist two times $t_1$, $t_2$ such that (\ref{5.eq11})
holds with $c=1$. From diffeomorphism invariance of
$\lambda_\alpha(g,\phi)$ defined in (\ref{1.eq13}), we obtain
$\lambda_\alpha(g,\phi)(t_1)=\lambda_\alpha(g,\phi)(t_2)$. From
Proposition \ref{1.prop3}, we get condition (\ref{1.eq14}) on
$[t_1,t_2]$, which means that $(g(t),\phi(t))$ must be a gradient
steady soliton according to Lemma \ref{4.lemma2} and uniqueness of
solutions. Moreover, the minimizer $f=-2\log v_{min}$ which realizes
$\lambda_\alpha(g,\phi)$ is the soliton potential. From
$(-4\Lap+R-\alpha \abs{\D\phi}^2)v_{min}
=\lambda_\alpha(g,\phi)v_{min}=:\lambda_\alpha v_{min}$, we obtain
\begin{equation}\label{5.eq12}
2\Lap f -\abs{\D f}^2+R-\alpha\abs{\D\phi}^2=\lambda_\alpha.
\end{equation}
Since $(g(t),\phi(t))$ is a steady soliton, (\ref{4.eq4}) holds with
$\sigma=0$. Plugging this into (\ref{5.eq12}) yields $\Lap f-\abs{\D
f}^2=\lambda_\alpha$, and we obtain from $\int_Me^{-f}dV=1$
\begin{equation*}
\lambda_\alpha=\int_M\lambda_\alpha e^{-f}dV = \int_M(\Lap f-\abs{\D
f}^2)e^{-f}dV = -\int_M\Lap(e^{-f})dV =0,
\end{equation*}
i.e.~$\Lap f=\abs{\D f}^2$. Another integration yields
\begin{equation*}
\int_M \abs{\D f}^2dV=\int_M\Lap f\;dV=0,
\end{equation*}
and thus $\D f\equiv 0$, $\Hess(f)\equiv 0$ on $M\times[0,T)$ and
(\ref{1.eq14}) becomes
\begin{equation*}
\left\{\begin{aligned}0 &= \Rc{}-\alpha\D\phi\otimes\D\phi,\\%
0 &= \tau_g \phi. \end{aligned}\right.
\end{equation*}
In particular, $\phi(t)$ is harmonic and $(g(t),\phi(t))$ is
stationary.%
\item[ii)] The proof here is analogous to the case of steady
breathers, but we first need to construct a scaling invariant
version of $\lambda_\alpha(g,\phi)$. We define
\begin{equation}\label{5.eq13}
\bar{\lambda}_\alpha(g,\phi):=\lambda_\alpha(g,\phi)\bigg(\int_M
dV_g\bigg)^{\!2/m}.
\end{equation}
This quantity is invariant under rescaling $\tilde{g}=c g$. A proof
of this fact is given in the appendix of the author's dissertation
\cite{Muller:diss}. Moreover, we claim that at times where
$\bar{\lambda}_\alpha(t):=\bar{\lambda}_\alpha(g,\phi)(t)\leq 0$, we
have $\dt\bar{\lambda}_\alpha(t)\geq 0$. Indeed, note that 
$\bar{f}=\log V(t)$ satisfies $\int_M
e^{-\bar{f}}dV=1$ and is thus an admissible test function in the
definition of $\lambda_\alpha$, hence
\begin{equation}\label{5.eq15}
\lambda_\alpha(g,\phi)\leq \sF_\alpha(g,\phi,\log V(t))=\int_M
S\,e^{-\log V(t)}dV=V(t)^{-1}\int_M S\; dV.
\end{equation}
With the assumption $\lambda_\alpha(t)\leq 0$, we find
\begin{equation}\label{5.eq17}
\begin{aligned}
\dt\bar{\lambda}_\alpha &\geq 2V^{2/m}\int_M
\Big(\abs{\sS+\Hess(f)-\tfrac{1}{m}(S+\Lap f)g}^2+\alpha \abs{\tau_g
\phi-\scal{\D\phi,\D f}}^2\Big)d\mu\\%
&\quad\,+\tfrac{2}{m}V^{2/m}\bigg(\int_M(S+\Lap f)^2d\mu
-\Big(\int_M(S+\Lap f)d\mu\Big)^2\bigg),
\end{aligned}
\end{equation}
the right hand side being nonnegative by H\"{o}lder's inequality (see again
\cite{Muller:diss} for a more detailed computation). Now, assume that 
$(g(t),\phi(t))$ is an expanding breather. Since
$\bar{\lambda}_\alpha(g,\phi)$ is invariant under diffeomorphisms
and scaling, we have
$\bar{\lambda}_\alpha(t_1)=\bar{\lambda}_\alpha(t_2)$ for the two
times $t_1$, $t_2$ that satisfy (\ref{5.eq11}). Since
$V(t_1)<V(t_2)$, there must be a time $t_0\in[t_1,t_2]$ with $\dt
V(t_0)>0$ and hence with (\ref{5.eq15})
\begin{equation*}
\lambda_\alpha(t_0)\leq V(t_0)^{-1}\int_M S\; dV =-V(t_0)^{-1}\dt
V(t_0) <0.
\end{equation*}
The claim applies and we obtain $\bar{\lambda}_\alpha(t_1)
\leq\bar{\lambda}_\alpha(t_0)<0$ and since
$\bar{\lambda}_\alpha(t_2)=\bar{\lambda}_\alpha(t_1)$, we see that
$\bar{\lambda}_\alpha(t)$ must be a negative constant. Hence, both
lines on the right hand side of (\ref{5.eq17}) have to vanish. This
means that $(S+\Lap f)$ has to be constant in space for all $t$ and
because $\lambda_\alpha(t)=\int_M(S+\Lap f)d\mu$, this constant has to be
$\lambda_\alpha(t)$. From the first line of (\ref{5.eq17}) we obtain
\begin{equation}\label{5.eq18}
\left\{\begin{aligned}0 &= \Rc{}-\alpha\D\phi\otimes\D\phi+\Hess(f)
-\tfrac{\lambda_\alpha}{m}g,\\%
0 &= \tau_g \phi-\scal{\D\phi,\D f}. \end{aligned}\right.
\end{equation}
By Lemma \ref{4.lemma2}, $(g(t),\phi(t))_{t\in[0,T)}$ is an
expanding soliton with potential $f=-2\log v_{min}$. This means that
we can use (\ref{5.eq12}), which implies
\begin{equation}\label{5.eq19}
0= 2\Lap f-\abs{\D f}+S-\lambda_\alpha = 2\Lap f -\abs{\D f}+S-(\Lap
f + S) = \Lap f - \abs{\D f}
\end{equation}
and thus  by integration $\D f \equiv 0$, $\Hess(f)\equiv 0$, as
above. Plugging this into (\ref{5.eq18}), the second equation tells
us that $\phi(t)$ is harmonic and the first equation yields
\begin{equation*}
\dt g = -2\Rc{}+2\alpha\D\phi\otimes\D\phi =
-2\tfrac{\lambda_\alpha}{m}g,
\end{equation*}
i.e.~$(M,g(t))$ simply expands without changing its
shape.%
\item[iii)] If $(g(t),\phi(t))$ is a shrinking breather, there exist
$t_1$, $t_2$ and $c<1$ which such that (\ref{5.eq11}) is satisfied.
We define
\begin{equation*}
\tau_0:= \frac{t_2-c\,t_1}{1-c}>t_2, \qquad\text{and}\qquad
\tau(t)=\tau_0-t.
\end{equation*}
Note that $\tau(t)$ is always positive on $[t_1,t_2]$. Moreover,
$c=(\tau_0-t_2)/(\tau_0-t_1)=\tau(t_2)/\tau(t_1)$. Then, from the
scaling behavior of $\mu_\alpha(g,\phi,\tau)$ and diffeomorphism
invariance we obtain
\begin{equation}\label{5.eq20}
\begin{aligned}
\mu_\alpha(g(t_2),\phi(t_2),\tau(t_2))&=
\mu_\alpha(c\,\psi^*g(t_1),\psi^*\phi(t_1),c\tau(t_1))\\
&=\mu_\alpha(\psi^*g(t_1),\psi^*\phi(t_1),\tau(t_1))\\
&=\mu_\alpha(g(t_1),\phi(t_1),\tau(t_1)).
\end{aligned}
\end{equation}
By the equality case of the monotonicity result in Proposition
\ref{5.prop2}, $(g(t),\phi(t))$ must satisfy (\ref{5.eq10}) and
according to Lemma \ref{4.lemma2} thus has to be a gradient
shrinking soliton.
\end{itemize}
It remains to prove the additional statement in the cases where
$\dim M=2$ or $(M,g(0))$ is Einstein. If $(g(t),\phi(t))$ is a
steady or expanding breather, we have seen that $\dt g= cg$. In
particular, if $(M,g(t))$ is Einstein at $t=0$ it remains Einstein
under the flow. Moreover, since $\Rc{}= \tfrac{R}{m}g$ in these two
cases, we get
\begin{equation*}
(\phi^*\gamma)_{ij}=\D_i\phi\D_j\phi=\tfrac{1}{2\alpha}\big(\dt
g_{ij}+2R_{ij}\big)=\tfrac{1}{2\alpha}\big(2\tfrac{R}{m}+c\big)g_{ij},
\end{equation*}
i.e.~$\phi$ is conformal. It is a well-known fact that conformal
harmonic maps have to be minimal branched immersions
(cf.~Hartman-Wintner \cite{HartmanWintner}).
\end{proof}

\section{Reduced volume and non-collapsing theorem}
Let us briefly restate the main result from our previous article
\cite{Muller:MonotoneVolumes} about the monotonicity of reduced
volumes for flows of the form $\dt g_{ij}=-2S_{ij}$, where $S_{ij}$ 
is a symmetric tensor with trace $S=g^{ij}S_{ij}$. (The resuts can also be
found in the authors thesis \cite{Muller:diss}.)

\subsection{Monotonicity of backwards reduced volume}
In order to define the backwards reduced distance and volume, we
need a backwards time $\tau(t)$ with $\dt\tau(t) = -1$. Without loss
of generality, one may assume (possibly after a time shift) that
$\tau = -t$.
\begin{defn}\label{6.def2}
Assume $\dtau g_{ij}=2S_{ij}$ has a solution for $\tau \in [0,\bar{\tau}]$ and
$0\leq \tau_1<\tau_2\leq \bar{\tau}$, we define the $\sL_b$-length
of a curve $\eta:[\tau_0,\tau_1]\to M$ by
\begin{equation*}
\sL_b(\eta) := \int_{\tau_0}^{\tau_1}\sqrt{\tau}\left(S(\eta(\tau))
+ \Abs{\tfrac{d}{d\tau}\eta(\tau)}^2 \right)d\tau.
\end{equation*}
Fix the point $p\in M$ and $\tau_0=0$ and define the backwards
reduced distance by
\begin{equation}\label{6.eq4}
\ell_b(q,\tau_1):= \inf_{\eta \in
\Gamma}\left\{\frac{1}{2\sqrt{\tau_1}}\int_0^{\tau_1}\sqrt{\tau}
\left(S+\Abs{\tfrac{d}{d\tau}\eta}^2\right)d\tau\right\},
\end{equation}
where $\Gamma= \{\eta:[0,\tau_1]\to M \mid
\eta(0)=p,\,\eta(\tau_1)=q\}$. The backwards reduced volume is
defined by
\begin{equation}\label{6.eq5}
\tilde{V}_b(\tau) := \int_M
(4\pi\tau)^{-m/2}e^{-\ell_b(q,\tau)}dV(q).
\end{equation}
\end{defn}
The following is proved in \cite[Theorem
1.4]{Muller:MonotoneVolumes}.
\begin{thm}\label{6.thm4}%
Suppose that $g(t)$ evolves by $\dt g_{ij}=-2S_{ij}$ and the quantity
\begin{equation}\label{6.eq6}
\begin{aligned}
\sD(\sS,X) &:= \dt S-\Lap S -2\Abs{S_{ij}}^2
+4(\D_i S_{ij})X_j -2(\D_j S)X_j\\%
&\phantom{:}\quad + 2R_{ij}X_iX_j - 2S_{ij}X_iX_j,
\end{aligned}
\end{equation} is nonnegative for all vector fields $X \in \Gamma(TM)$
and all times $t$ for which the flow exists. Then the backwards
reduced volume $\tilde{V}_b(\tau)$ is non-increasing in $\tau$,
i.e.~non-decreasing in $t$.
\end{thm}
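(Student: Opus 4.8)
\emph{Proof plan.} The idea is to transplant Perelman's $\sL$-geometry for the Ricci flow \cite{Perelman:entropy} to the flow $\dtau g_{ij}=2S_{ij}$ of Definition \ref{6.def2}, isolating the single place where the hypothesis $\sD(\sS,X)\geq 0$ is needed. First I would study the Euler--Lagrange equation of the $\sL_b$-length functional: after the substitution $s=\sqrt{\tau}$ it becomes a regular second-order ODE, so standard theory produces $\sL_b$-geodesics $\eta$ issuing from the base point $p$ at $\tau=0$, depending smoothly on the initial vector $v:=\lim_{\tau\to 0^+}\sqrt{\tau}\,\tfrac{d}{d\tau}\eta\in T_pM$; this defines the $\sL_b$-exponential map $\sL_b\mathrm{exp}_\tau\colon T_pM\to M$. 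A limiting argument as in \cite{Perelman:entropy} shows that minimizers always exist, that $\ell_b(\cdot,\tau)$ is locally Lipschitz, and that it is smooth away from a closed $\sL_b$-cut locus $\mathcal{C}(\tau)\subset M$ of measure zero.

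Next I would carry out the first and second variations of $\sL_b$. The first variation gives the geodesic equation and expresses $\D\ell_b$ through the terminal velocity of a minimizing geodesic, yielding pointwise identities of the form
\begin{equation*}
\abs{\D\ell_b}^2 = -S + \tfrac{\ell_b}{\tau} - \tfrac{1}{\tau^{3/2}}K, \qquad \dtau\ell_b = S - \tfrac{\ell_b}{\tau} + \tfrac{1}{2\tau^{3/2}}K,
\end{equation*}
where $K=K(q,\tau)$ is a $\tau$-weighted integral along the minimizer of a Hamilton-type trace-Harnack expression for the flow. The second variation, fed with suitable Jacobi-type comparison fields along the minimizer and traced over an orthonormal frame, produces the Laplacian comparison estimate
\begin{equation*}
\Lap\ell_b \leq \tfrac{m}{2\tau} - S - \tfrac{1}{2\tau^{3/2}}K ,
\end{equation*}
and it is exactly in controlling the sign of the terms that assemble into this estimate (and the companion Jacobian estimate below) that $\sD(\sS,X)\geq 0$ is invoked: in passing from $\Rc{}$ to a general symmetric tensor $\sS$, one picks up an extra contribution coming from the defect $\dt S-\Lap S-2\Abs{S_{ij}}^2$ by which $S$ fails to satisfy the Ricci-flow reaction--diffusion identity, and together with the vector-field-dependent terms this recombines precisely into $\sD(\sS,X)$, which is nonnegative by hypothesis (and identically zero for the Ricci flow, recovering Perelman's theorem). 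Combining the three relations gives the key differential inequality
\begin{equation*}
\dtau\ell_b - \Lap\ell_b + \abs{\D\ell_b}^2 - S + \tfrac{m}{2\tau} \geq 0 ,
\end{equation*}
valid on $M\setminus\mathcal{C}(\tau)$ and in the barrier sense everywhere; equivalently $u:=(4\pi\tau)^{-m/2}e^{-\ell_b}$ is a barrier subsolution of the conjugate heat equation $\Box^*u\leq 0$ with $\Box^*=-\dt-\Lap+S$ for the flow $\dt g_{ij}=-2S_{ij}$.

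To deduce monotonicity of $\tilde{V}_b$ I would argue on $T_pM$ rather than on $M$, which sidesteps integration by parts across the cut locus. Writing $\mathcal{J}(v,\tau)$ for the Jacobian of $\sL_b\mathrm{exp}_\tau$, the second variation estimate also bounds $\dtau\log\mathcal{J}$ along each $\sL_b$-geodesic, and combined with the expression for $\dtau\ell_b$ along that geodesic it follows that the pulled-back integrand $(4\pi\tau)^{-m/2}e^{-\ell_b}\mathcal{J}$ is non-increasing in $\tau$ on the open set $\Omega(\tau)\subset T_pM$ on which $\sL_b\mathrm{exp}_\tau$ is a nondegenerate injection onto its image. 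Since a point can only leave $\Omega(\tau)$ as $\tau$ increases (once a geodesic ceases to be minimizing, or meets a conjugate value, it never recovers), we have $\Omega(\tau_2)\subseteq\Omega(\tau_1)$ for $\tau_1<\tau_2$, while $M\setminus\sL_b\mathrm{exp}_\tau(\Omega(\tau))=\mathcal{C}(\tau)$ is null, so that $\tilde{V}_b(\tau)=\int_{\Omega(\tau)}(4\pi\tau)^{-m/2}e^{-\ell_b}\mathcal{J}(\cdot,\tau)\,dv$. Hence
\begin{equation*}
\tilde{V}_b(\tau_2)=\int_{\Omega(\tau_2)}(4\pi\tau_2)^{-m/2}e^{-\ell_b}\mathcal{J}(\cdot,\tau_2)\,dv \leq \int_{\Omega(\tau_2)}(4\pi\tau_1)^{-m/2}e^{-\ell_b}\mathcal{J}(\cdot,\tau_1)\,dv \leq \tilde{V}_b(\tau_1),
\end{equation*}
the last step because enlarging the domain from $\Omega(\tau_2)$ to $\Omega(\tau_1)$ only adds a nonnegative amount; this is the asserted monotonicity of $\tilde V_b$ in $\tau$ (equivalently, in $t$).

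I expect the main obstacle to be the second variation / Laplacian comparison step: one must choose the comparison vector fields along the minimizing $\sL_b$-geodesic so that all curvature terms and all derivatives of $\sS$ recombine \emph{exactly} into $\sD(\sS,X)$ plus manifestly controllable pieces, while simultaneously keeping track of the boundary behaviour at $\tau=0$, where the weight $\sqrt{\tau}$ degenerates and one needs the expected short-time asymptotics of $\ell_b$. The remaining cut-locus bookkeeping --- lower semicontinuity of $\ell_b$, the barrier form of the computation, the measure-zero exceptional set, and the nesting $\Omega(\tau_2)\subseteq\Omega(\tau_1)$ --- is technical but standard once the smooth computation is in hand. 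Since all of this is exactly the content of \cite[Theorem 1.4]{Muller:MonotoneVolumes}, I would ultimately refer to that paper for the full details.
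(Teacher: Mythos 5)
Your plan is sound, and it is essentially the argument behind the result: note first that the paper does not prove Theorem \ref{6.thm4} in the text at all, but simply invokes \cite[Theorem 1.4]{Muller:MonotoneVolumes}, so the relevant comparison is with that reference (and with \cite{Muller:diss}). Your outline reproduces the Perelman-style $\sL$-geometry correctly: the $\sL_b$-geodesic equation after the substitution $s=\sqrt{\tau}$, the identities for $\abs{\D\ell_b}^2$ and $\dtau\ell_b$ with the $\tau$-weighted Harnack integral $K$, the traced second-variation/Laplacian comparison, and --- the one genuinely new point relative to Perelman --- the fact that the defect terms by which $S$ fails the Ricci-flow identities recombine exactly into $\sD(\sS,X)$, which is where the hypothesis enters; this matches the structure of \cite{Muller:MonotoneVolumes}. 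Your concluding step, deducing monotonicity of $\tilde{V}_b$ from the pointwise monotonicity of the integrand $(4\pi\tau)^{-m/2}e^{-\ell_b}\mathcal{J}$ on $T_pM$ together with the nesting $\Omega(\tau_2)\subseteq\Omega(\tau_1)$, is precisely the variant that this paper itself records in the proof of Theorem \ref{6.thm13} (Claim 1) and attributes to \cite{Muller:diss} as an ``alternative proof'' of the monotonicity established in \cite{Muller:MonotoneVolumes}; the Jacobian route has the advantage of avoiding integration of the barrier inequality for $\Box^*\big((4\pi\tau)^{-m/2}e^{-\ell_b}\big)$ across the cut locus, at the cost of the injectivity/nesting bookkeeping for $\sL_b\mathrm{exp}$, and it is also what the non-collapsing argument in Section 8 actually uses. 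So, modulo the admitted technical work in the second-variation computation (which you correctly flag as the crux and defer to the reference), your proposal is a faithful reconstruction of the intended proof.
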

In our case where $S_{ij}$ is given by $R_{ij}- \alpha\D_i\phi\D_j\phi$, the evolution
equation (\ref{2.eq16}) for $S_{ij}$ together with $4(\D_i
S_{ij})X_j - 2(\D_j S)X_j = -4\alpha\, \tau_g \phi \D_j\phi X_j$
yields
\begin{equation*}
\sD(S_{ij},X)=2\alpha \abs{\tau_g\phi-\D_X\phi}^2
-\dot{\alpha}\abs{\D\phi}^2
\end{equation*}
for all $X$ on $M$. This means, $\sD(S_{ij},X)\geq 0$ is satisfied
for the $(RH)_\alpha$ flow with a positive non-increasing coupling
function $\alpha(t)$ and the monotonicity of the reduced volume
holds.

\subsection{No local collapsing theorem}
We have seen in Section 6 that the metrics $g(t)$ along the
$(RH)_\alpha$ flow are uniformly equivalent as long as the curvature
on $M$ stays uniformly bounded. But a-priori, it could happen that at a
singularity (i.e.~when $\Rm{}$ blows up) the solution collapses
geometrically in the following sense.
\begin{defn}
Let $(g(t),\phi(t))_{t\in[0,T)}$ be a maximal solution of\/
$(RH)_\alpha$, or more generally of any flow of the form $\dt
g_{ij}=-2S_{ij}$. We say that this solution is locally collapsing at
time $T$, if there is a sequence of times $t_k\nearrow T$ and a
sequence of balls $B_k:=B_{g(t_k)}(x_k,r_k)$ at time $t_k$, such
that the following holds. The ratio $r_k^2/t_k$ is bounded, the
curvature satisfies $\abs{\Rm{}}\leq r_k^{-2}$ on the parabolic
neighborhood $B_k\times[t_k-r_k^2,t_k]$ and $r_k^{-m}\vol(B_k)\to 0$
as $k\to\infty$.
\end{defn}
Using the monotonicity of the reduced volume, we obtain the
following result.
\begin{thm}\label{6.thm13}
Let $(g(t),\phi(t))$ be a solution of\/ $(RH)_\alpha$ with
non-increasing $\alpha(t)\in[\underaccent{\bar}\alpha,\bar\alpha]$,
$0<\underaccent{\bar}\alpha\leq\bar\alpha<\infty$ on a finite time
interval $[0,T)$. Then this solution is not locally collapsing at
$T$.
\end{thm}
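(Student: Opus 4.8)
The plan is to deduce the theorem from the monotonicity of the backwards reduced volume (Theorem \ref{6.thm4}) by a contradiction argument modelled on Perelman's proof of no local collapsing for the Ricci flow \cite{Perelman:entropy} (see also Kleiner--Lott \cite{KleinerLott}); the genuinely new points are that all hypotheses are invariant under parabolic rescaling and that the $\phi$-contributions to $\sD(\sS,X)$ and to $S$ are harmless. So suppose the solution is locally collapsing at $T$, and pick $t_k\nearrow T$ with balls $B_k=B_{g(t_k)}(x_k,r_k)$ as in the definition; since $B_k\times[t_k-r_k^2,t_k]\subset M\times[0,T)$ we have $r_k^2\le t_k$. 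Parabolic rescaling $g_k(t):=r_k^{-2}g(r_k^2 t)$, $\phi_k(t):=\phi(r_k^2 t)$, $\alpha_k(t):=\alpha(r_k^2 t)$ produces for each $k$ a solution $(g_k,\phi_k)$ of\/ $(RH)_{\alpha_k}$ with $\alpha_k$ again positive, non-increasing and $[\underaccent{\bar}\alpha,\bar\alpha]$-valued; writing (abusively) $t_k$ again for the rescaled time $r_k^{-2}t_k\ge 1$ and $B_k$ again for $B_{g_k(t_k)}(x_k,1)$, we have $\abs{\Rm{}}\le 1$ on $B_k\times[t_k-1,t_k]$ and $\vol_{g_k(t_k)}(B_k)=r_k^{-m}\vol_{g(t_k)}(B_k)\to 0$. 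It therefore suffices to rule out this unit-scale collapse, and working at unit scale is exactly what makes the constants below uniform in $k$.

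For each $k$ we would set up the $\sL_b$-geometry of Definition \ref{6.def2} for $(g_k,\phi_k)$, based at $x_k$ with backwards time $\tau=t_k-t$, and write $S_k=R-\alpha_k\abs{\D\phi_k}^2$, $\ell_b^{(k)}$ for the reduced distance and $\tilde V_b^{(k)}(\tau)$ for the reduced volume. By the discussion following Theorem \ref{6.thm4}, $\sD(\sS_k,X)\ge 0$ for all $X$ (since $\alpha_k$ is positive and non-increasing), so Theorem \ref{6.thm4} applies; what is actually used is the pointwise monotonicity in $\tau$ of the reduced-volume density $(4\pi\tau)^{-m/2}e^{-\ell}\mathcal{J}_\tau$ along $\sL_b$-geodesics, which is established in the course of proving Theorem \ref{6.thm4} in \cite{Muller:MonotoneVolumes}. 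Fix a small $\epsilon\in(0,\tfrac12)$. On $B_{g_k(t_k)}(x_k,\tfrac12)\times[t_k-\epsilon,t_k]$ the curvature bound together with the localized interior estimate Corollary \ref{app.prop5} (applied to the flow on $[t_k-1,t_k]$) give $\abs{\D\phi_k}^2\le C$, hence $\abs{S_k}\le C$, so the metrics $g_k(t)$, $t\in[t_k-\epsilon,t_k]$, are mutually $(1\pm C\epsilon)$-equivalent there. Restricting the integral defining $\tilde V_b^{(k)}(\epsilon)$ to the small ball $B_{g_k(t_k)}(x_k,\sqrt\epsilon)$ — on which $\ell_b^{(k)}(\cdot,\epsilon)\le C$, by testing with paths of constant parameter speed, and whose $g_k(t_k-\epsilon)$-volume is $\ge c_1\epsilon^{m/2}$ — then yields, after the $\epsilon^{m/2}$ cancels the prefactor, a uniform lower bound $\tilde V_b^{(k)}(\epsilon)\ge c_0(m)>0$.

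For the matching upper bound one splits the $\sL_b$-geodesics $\gamma_v$ from $x_k$ (indexed by their initial data $v$) into those staying in $B_{g_k(t_k)}(x_k,\tfrac12)$ for all $\tau\in[0,\epsilon]$ and those that leave it. The first class has $\ell_b^{(k)}\ge -C\epsilon$ and endpoints in $B_{g_k(t_k)}(x_k,\tfrac12)$, so (passing to the initial vectors of minimal geodesics) it contributes at most $C(\epsilon,m)\,\vol_{g_k(t_k)}(B_k)\to 0$. For the second class, if $\gamma_v$ first exits $B_{g_k(t_k)}(x_k,\tfrac12)$ at $\tau_0\le\epsilon$, then the curvature bound (controlling distances while $\gamma_v$ is still in $B_k$) and Cauchy--Schwarz force $\int_0^{\tau_0}\sqrt\tau\,\Abs{\tfrac{d}{d\tau}\gamma_v}^2\,d\tau\ge c\,\epsilon^{-1/2}$; combined with $S_k\ge-\tfrac{m}{2t}\ge-m$ on $[\tau_0,\epsilon]$ (the global lower bound $S\ge-m/(2t)$ from Section 5, cf.~Corollary \ref{2.cor9}) and $\abs{S_k}\le C$ on $[0,\tau_0]$, this gives $\ell_b^{(k)}(\gamma_v(\epsilon),\epsilon)\ge\tfrac{1}{128\epsilon}$ for $\epsilon$ small. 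By the pointwise density monotonicity — together with the fact that at $\tau_0$ the density is within a factor $(1+C\epsilon)$ of the Euclidean value $(4\pi)^{-m/2}e^{-\abs{v}^2/4}$, since $\gamma_v$ has so far stayed in the controlled region, and with the observation that exiting $B_{g_k(t_k)}(x_k,\tfrac12)$ within time $\epsilon$ forces $\abs{v}\ge c\,\epsilon^{-1/2}$ — the second class contributes at most $\int_{\{\abs{v}\ge c\epsilon^{-1/2}\}}(4\pi)^{-m/2}e^{-\abs{v}^2/4}\,dv\le C_m e^{-c'/\epsilon}$. Hence $\tilde V_b^{(k)}(\epsilon)\le C(\epsilon,m)\,\vol_{g_k(t_k)}(B_k)+C_m e^{-c'/\epsilon}$; choosing $\epsilon$ so small that $C_m e^{-c'/\epsilon}<c_0(m)/2$ and then $k$ so large that $C(\epsilon,m)\,\vol_{g_k(t_k)}(B_k)<c_0(m)/2$ contradicts the lower bound, which would give the theorem.

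The one place where the coupled nature of the flow really intervenes — and the main difficulty — is verifying that the $\sL_b$-geometry in the good parabolic region behaves exactly as for the Ricci flow: the $\sL_b$-Jacobian and distance-distortion comparisons, and the closeness of the reduced-volume density to its Euclidean value for small $\tau$, all need a two-sided bound on $S_k=R-\alpha_k\abs{\D\phi_k}^2$ there. The upper bound on $R$ is immediate from $\abs{\Rm{}}\le 1$; the bound on $\abs{\D\phi_k}^2$ is exactly what the localized estimate Corollary \ref{app.prop5} provides, and it was stated in that localized form for precisely this application; and the global lower bound $S_k\ge-m/(2t)$ — needed to handle the portion of an $\sL_b$-geodesic that has escaped the good region — was derived in Section 5. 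With these inputs in place, every $\sL_b$-geometric fact used is one already developed for general flows $\dt g_{ij}=-2S_{ij}$ in \cite{Muller:MonotoneVolumes}, so only bookkeeping would remain.
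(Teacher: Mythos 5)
There is a genuine gap, and it sits at the heart of your argument: the claimed uniform lower bound $\tilde V_b^{(k)}(\eps)\ge c_0(m)>0$ at the collapsing scale is false, and the reasoning offered for it is circular. You obtain it by asserting that the ball $B_{g_k(t_k)}(x_k,\sqrt\eps)$ has $g_k(t_k-\eps)$-volume at least $c_1\eps^{m/2}$. A two-sided curvature bound $\abs{\Rm{}}\le 1$ gives no such lower volume bound --- lower bounds on volumes of small balls under a curvature bound are exactly the content of non-collapsing (equivalently, an injectivity radius bound), i.e.\ the statement being proved. Worse, in the collapsing scenario this bound demonstrably fails: for $\eps<1$ the ball $B_{g_k(t_k)}(x_k,\sqrt\eps)$ is contained in the rescaled unit ball $B_k$, whose volume tends to $0$ by hypothesis, so $\vol_{g_k(t_k)}\big(B_{g_k(t_k)}(x_k,\sqrt\eps)\big)\to 0$ (and the metrics at times $t_k-\eps$ and $t_k$ are uniformly equivalent there, so the same holds at $t_k-\eps$). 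In fact your own upper bound, $\tilde V_b^{(k)}(\eps)\le C(\eps,m)\vol_{g_k(t_k)}(B_k)+C_m e^{-c'/\eps}$, correctly shows that the reduced volume at this scale is \emph{small}; this is precisely Claim~1 of the paper's proof, and that part of your proposal is sound. But no contradiction can be extracted by working at a single small backwards time: both bounds there are small, and the global monotonicity of $\tilde V_b$ is never actually used in your argument (you only invoke the pointwise density monotonicity for the Gaussian tail).

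The missing idea is the anchor at the fixed initial data, which is what the monotonicity of Theorem \ref{6.thm4} is for. The paper's proof (Claim~2) runs the reduced distance all the way back to real time $0$: the differential inequality $\dtau\ell_b+\Lap\ell_b\le -\ell_b/\tau+m/(2\tau)$ and the maximum principle produce a point $q_k$ at real time $T/2$ with $\ell_b(q_k,t_k-T/2)\le m/2$; since the flow is smooth (hence curvature, $\sS$ and the metrics are uniformly controlled) on $[0,T/2]$, concatenating with $g(0)$-geodesics gives a uniform bound $\ell_b(p,t_k)\le C$ for all $p$ in a unit ball of the \emph{fixed} metric $g(0)$, whose volume is bounded below independently of $k$. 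This yields $\tilde V_k(t_k)\ge c>0$, and then monotonicity (non-increasing in $\tau$) gives $\tilde V_k(t_k)\le\tilde V_k(\eps_k r_k^2)\to 0$, the desired contradiction. Your parabolic rescaling is harmless but also unhelpful here: after rescaling, the initial slice sits at backwards time $r_k^{-2}t_k\to\infty$, and you never return to it, so you have discarded the only non-collapsed region available to seed the lower bound. To repair the proof you must reinstate a Claim~2-type argument (in either the rescaled or unrescaled picture) and invoke the monotonicity of $\tilde V_b$ between the two scales.
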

The only ingredients of the proof are the interior
gradient estimates from Corollary \ref{app.prop5} and the
monotonicity of the backwards reduced volume stated above. Hence,
every flow $\dt g=-2\sS$ that satisfies the assumption of Theorem
\ref{6.thm4} and some interior estimates for $\sS$, $\D S$ in the
spirit of Corollary \ref{app.prop5} will also satisfy the
non-collapsing result. For the $(RH)_\alpha$ flow, it is possible to
obtain a slightly stronger result using the monotonicity of
$\mu_\alpha(g,\phi,\tau)$ from Section 7 instead of the monotonicity
of the backwards reduced volume. In the special case $N\subseteq\RR$, 
this can be found in List's dissertation \cite[Section 7]{List:diss}. 
The proof in the case of $(RH)_\alpha$ is analogous. However, the 
result here is more general in the sense that it may be adopted to 
other flows $\dt g=-2\sS$ in the way explained above.
\begin{proof}
The proof follows Perelman's results for the Ricci
flow \cite{Perelman:entropy} very closely, see also the notes on his
paper by Kleiner and Lott \cite{KleinerLott} and the book by Morgan
and Tian \cite{MorganTian}. However, we need the more general
results from \cite{Muller:MonotoneVolumes} that also hold for our 
coupled flow system. We only give a sketch.%
\newline

The proof is by contradiction. Assume that there is some sequence of
times $t_k\nearrow T$ and some sequence of balls
$B_k:=B_{g(t_k)}(x_k,r_k)$ at each time $t_k$, such that $r_k^2$ is
bounded, the curvature is bounded by $\abs{\Rm{}}\leq r_k^{-2}$ on
the parabolic neighborhood $B_k\times[t_k-r_k^2,t_k]$ and
$r_k^{-m}\vol(B_k)\to 0$ as $k\to\infty$. Define
$\eps_k:=r_k^{-1}\vol(B_k)^{1/m}$, then $\eps_k\to 0$ for
$k\to\infty$. For each $k$, we set $\tau_k(t)=t_k-t$ and define the
backwards reduced volume $\tilde{V}_k$ using curves going backward
in real time from the base point $(x_k,t_k)$, i.e.~forward in time
$\tau_k$ from $\tau_k=0$. The goal is to estimate the reduced
volumes $\tilde{V}_k(\eps_kr_k^2)$, where $\tau_k=\eps_kr_k^2$
corresponds to the real time $t=t_k-\eps_k r_k^2$, which is very
close to $t_k$ and hence close to $T$.
\paragraph{Claim 1:} $\lim_{k\to\infty}\tilde{V}_k(\eps_kr_k^2)=0$.
\begin{proof}
An $\sL_b$-geodesic $\eta(\tau)$ starting at $\eta(0)=x_k$ is
uniquely defined through its initial vector $v=\lim_{\tau\to
0}2\sqrt{\tau}X=\lim_{\lambda\to0}\tilde{X}$. First, we show that if
$\abs{v}\leq \frac{1}{8}\eps_k^{-1/2}$ with respect to the metric at
$(x_k,t_k)$, then $\eta(\tau)$ does not escape from
$B_k^{1/2}:=B_{g(t_k)}(x_k,r_k/2)$ in time $\tau=\eps_kr_k^2$. Write
$\hat{t}_k=t_k-r_k^2$. Since $\abs{\Rm{}}\leq r_k^{-2}$ on
$B_k\times[\hat{t}_k,t_k]$ by assumption, we obtain from Corollary
\ref{app.prop5}
\begin{equation*}
\abs{\D\phi}^2 \leq \frac{C}{t-\hat{t}_k},\quad\abs{\D^2\phi}^2 \leq
\frac{C}{(t-\hat{t}_k)^2},\quad\abs{\D\Rm{}}^2\leq
\frac{C}{(t-\hat{t}_k)^3},\quad\textrm{on }B_k^{1/2}
\times(\hat{t}_k,t_k),
\end{equation*}
for some constant $C$ independent of $k$. Without loss of
generality, $\eps_k\leq\tfrac{1}{2}$ so that
$t-\hat{t}_k\geq\tfrac{1}{2}r_k^2$ whenever $t\in
[t_k-\eps_kr_k^2,t_k)$. This means that
\begin{equation*}
\abs{\D\phi}^2 \leq Cr_k^{-2},\quad\abs{\D^2\phi} \leq
Cr_k^{-2},\quad\abs{\D\Rm{}}\leq Cr_k^{-3},\quad\textrm{on
}B_k^{1/2}\times[t_k-\eps_kr_k^2,t_k).
\end{equation*}
Together with the assumption $\abs{\Rm{}}\leq r_k^{-2}$, this yields
\begin{equation}\label{6.eq38}
\begin{aligned}
\abs{\sS}&\leq\abs{\Rc{}}+\abs{\D\phi}^2\leq Cr_k^{-2},\\%
\abs{\D S}&\leq \abs{\D R}+\abs{\D\phi}\abs{\D^2\phi}\leq Cr_k^{-3},
\end{aligned}
\end{equation}
on $B_k^{1/2}\times[t_k-\eps_kr_k^2,t_k)$. Plugging this into the
estimate (4.5) of \cite{Muller:MonotoneVolumes}, we get
\begin{equation}\label{6.eq39}
\dl\lvert\tilde{X}\rvert \leq \lambda C\lvert\tilde{X}\rvert
r_k^{-2} + \lambda^2 Cr_k^{-3}\leq C\lvert\tilde{X}\rvert
\eps_k^{1/2}r_k^{-1}+C\eps_kr_k^{-1},
\end{equation}
for $\lambda=\sqrt{\tau}\leq\sqrt{\eps_kr_k^2}=\eps_k^{1/2}r_k$.
Since $\lvert\tilde{X}(0)\rvert=\abs{v}\leq\frac{1}{8}\eps_k^{-1/2}$
we obtain the estimate $\lvert\tilde{X}(\lambda)\rvert
\leq\frac{1}{4}\eps_k^{-1/2}$ for all $\tau\in[0,\eps_kr_k^2]$ if
$k$ is large enough, i.e.~$\eps_k$ small enough. With an
integration, we find
\begin{equation*}
\int_0^{\eps_kr_k^2}\abs{X(\tau)}d\tau = \int_0^{\sqrt{\eps_k}\,r_k}
\lvert\tilde{X}(\lambda)\rvert d\lambda
\leq\int_0^{\sqrt{\eps_k}\,r_k} \tfrac{1}{4}\eps_k^{-1/2}d\lambda
\leq \tfrac{1}{4}r_k.
\end{equation*}
Since the metrics $g(\tau=0)$ and $g(\tau=\eps_kr_k^2)$ are close to
each other, the length of the curve $\eta$ measured with respect to
$g(\tau=0)=g(t_k)$ will be at most $r_k/2$ for large enough $k$.
This means that indeed
\begin{equation}\label{6.eq40}
(\eta(\tau),t_k-\tau)\in
B_k^{1/2}\times[t_k-\eps_kr_k^2,t_k),\qquad\forall 0<\tau\leq
\eps_kr_k^2.
\end{equation}
With the bounds from (\ref{6.eq38}) and the lower bound in
\cite[Lemma 4.1]{Muller:MonotoneVolumes}, we obtain
\begin{equation*}
\sL_b(\eta)\geq -Cr_k^{-2}(\eps_kr_k^2)^{3/2} =
-C\eps_k^{3/2}r_k,\quad\textrm{i.e.}\quad\ell_b(q,\eps_kr_k^2)\geq
-C\eps_k
\end{equation*}
Thus, the contribution to the reduced volume
$\tilde{V}_k(\eps_kr_k^2)$ coming from $\sL_b$-geodesics with
initial vector $\abs{v}\leq\frac{1}{8}\eps_k^{-1/2}$ is bounded
above for large $k$ by
\begin{equation*}
\int_{B_k^{1/2}}(4\pi\eps_kr_k^2)^{-m/2}e^{C\eps_k}dV \leq
C\eps_k^{-m/2}r_k^{-m}\vol(B_k^{1/2})\leq C\eps_k^{m/2}\to
0\quad(k\to\infty).
\end{equation*}
Next, we estimate the contribution of geodesics with large initial
vector $\abs{v}> \frac{1}{8}\eps_k^{-1/2}$ to the reduced volume
$\tilde{V}_k(\eps_kr_k^2)$. Note that we can write the reduced 
volume with base point $(x_k,t_k)$ as
\begin{equation*}
\tilde{V}_k(\tau_1)=\int_M
(4\pi\tau_1)^{-m/2}e^{-\ell(q,\tau_1)}dV(q) =
\int_{\Omega(\tau_1,k)}
(4\pi\tau_1)^{-m/2}e^{-\ell(\Lexp(v),\tau_1)}J(v,\tau_1) dv.
\end{equation*}
Here, $\Lexp$ is the $\sL_b$-exponential map defined in
\cite{Muller:MonotoneVolumes}, taking $v$ to $\eta(\tau_1)$ with
$\eta$ being the $\sL_b$-geodesic with initial vector $v$,
$J(v,\tau_1)=\det d(\Lexp)$ denotes the Jacobian of $\Lexp$ and
$\Omega(\tau_1,k)\subset T_{x_k}M$ is a set which is mapped
bijectively to $M$ up to a set of measure zero under the map
$\Lexp$. In \cite{Muller:diss}, we prove that the integrand
\begin{equation}\label{6.eq41}
f(v,\tau_1):=(4\pi\tau_1)^{-m/2}e^{-\ell(\Lexp(v),\tau_1)}J(v,\tau_1)
\end{equation}
is non-increasing in $\tau_1$ for fixed $v$ and has the limit
$\lim_{\tau_1\to 0}f(v,\tau_1)=\pi^{-m/2}e^{-\abs{v}^2}$. Together
with $\Omega(\tau')\subset\Omega(\tau)$ for $\tau\leq\tau'$ this
yields an alternative proof of the monotonicity of the
reduced volumes obtained in \cite{Muller:MonotoneVolumes}. Moreover,
it implies that the contribution to the reduced volume
$\tilde{V}_k(\eps_kr_k^2)$ coming from $\sL_b$-geodesics with
initial vector $\abs{v}>\frac{1}{8}\eps_k^{-1/2}$ can be bounded by
\begin{equation}\label{6.eq45}
\int_{\abs{v}>\frac{1}{8}\eps_k^{-1/2}}\pi^{-m/2}e^{-\abs{v}^2}dv
\leq Ce^{-\frac{1}{64\eps_k}} \to 0\quad(k\to\infty),
\end{equation}
which completes the proof of Claim 1.
\end{proof}
\paragraph{Claim 2:} $\tilde{V}_k(t_k)$ is bounded below away from
zero.
\begin{proof}
Let us remark that $\tau=t_k$ corresponds to real time $t=0$. We
assume that $k$ is large enough, so that $t_k\geq T/2$. The idea
behind the proof is to go from $(x_k,t_k)$ to some point $q_k$ at
the real time $T/2$ (i.e.~$\tau=t_k-T/2$) for which the reduced
$\sL_b$-distance $\ell_b(q_k,t_k-T/2)$ is small. From the upper
bound on $L_b$ from \cite[Lemma 4.1]{Muller:MonotoneVolumes}, we see
that for small $\tau$ it is possible to find a point $q_k(\tau)$
such that $\ell_b(q_k(\tau),\tau)\leq \frac{m}{2}$. On the other
hand, combining the evolution equations for $\dtau \ell_b$ and $\Lap
\ell_b$, we obtain
\begin{equation}\label{6.eq46}
\dtau\big|_{\tau=\tau_1} \ell_b + \Lap \ell_b \leq
-\tfrac{1}{\tau_1}\ell_b + \tfrac{m}{2\tau_1}
\end{equation}
(in the barrier sense) and hence for the minimum of
$\ell_{min}(\tau)=\min_{q\in M}\ell_b(q,\tau)$
\begin{equation}\label{6.eq47}
\dtau\big|_{\tau=\tau_1} \ell_{min} \leq
-\tfrac{1}{\tau_1}\ell_{min} + \tfrac{m}{2\tau_1}
\end{equation}
in the sense of difference quotients. The latter is obtained by
applying the maximum principle to a smooth barrier. The inequality
(\ref{6.eq47}) shows that there is some point $q_k(\tau)$ with
$\ell_b(q_k(\tau),\tau)\leq \frac{m}{2}$ for every $\tau$. As
mentioned above, we choose $q_k$ at the real time $T/2$ with
$\ell_b(q_k,t_k-T/2)\leq \frac{m}{2}$. Let $\eta:[0,t_k-T/2]\to M$
be an $\sL_b$-geodesic realizing this length. Moreover, let
$\eta_p:[t_k-T/2,t_k]\to M$ be $g(t=0)$-geodesics (i.e. a
$g(\tau=t_k)$-geodesic) from $q_k$ at time $\tau=t_k-T/2$ to $p\in
B^{q_k}:=B_{g(\tau=t_k)}(q_k,1)=B_{g(t=0)}(q_k,1)$ at time
$\tau=t_k$. Since $\abs{\Rm{}}$ is uniformly bounded for
$t\in[0,T/2]$, we get a uniform bound for $S$ along this family of curves. 
Since all the metrics $g(\tau)$ with $\tau\in[t_k-T/2,t_k]$ are uniformly
equivalent, we get an uniform upper bound for the $\sL_b$-length of
all $\eta_p$. From this, we see that the concatenations
$(\eta\!\smile\!\eta_p): [0,t_k]\to M$ connecting $x_k$ to $p\in
B^{q_k}$ have uniformly bounded $\sL_b$-length, independent of $p$
and $k$. This gives a uniform bound $\ell_b(p,t_k)\leq C$, for all
$p\in B^{q_k}$ and $k\in\NN$ large enough. We can then estimate
\begin{equation*}
\tilde{V}_k(t_k)=\int_M (4\pi t_k)^{-m/2}e^{-\ell_b(q,t_k)}dV(q)
\geq \int_{B^{q_k}} (4\pi t_k)^{-m/2}e^{-C}dV \geq C\inf_{q_k\in
M}\vol({B^{q_k}}),
\end{equation*}
which is bounded below away from zero, independently of $k$. This
proves Claim 2.
\end{proof}
Since the backwards reduced volumes $\tilde{V}_k$ are non-increasing
in $\tau$ (i.e.~non-decreasing in real time $t$) according to
Theorem \ref{6.thm4}, we obtain $\tilde{V}_k(t_k)\leq
\tilde{V}_k(\eps_kr_k^2)$ for $k$ large enough. But since
$\tilde{V}_k(t_k)$ is bounded below away from zero by Claim 2 while
$\tilde{V}_k(\eps_kr_k^2)$ converges to zero with $k\to\infty$ by
Claim 1, we obtain the desired contradiction that proves the
theorem.
\end{proof}
\begin{appendix}

\section{Commutator identities}
It is well-known that a $(p,q)$-tensor $B$ (i.e.~a smooth
section of the bundle $(T^*M)^{\otimes p}\otimes(TM)^{\otimes q}$) 
satisfies the following commutator identity in local coordinates 
$(x^1,\ldots,x^n)$ induced by a chart $\phi: U \to \RR^n$, $U\subseteq M$,
\begin{equation}\label{app.eq7}
[\D_i,\D_j]B^{k_1\dots k_q}_{\ell_1\dots\ell_p} = \sum_{r=1}^q
R_{ijm}^{k_r}B^{k_1\dots k_{r-1}m k_{r+1}\dots
k_q}_{\ell_1\dots\ell_p} + \sum_{s=1}^p R_{ij\ell_s m}B^{k_1\dots
k_q}_{\ell_1\dots\ell_{s-1}m\ell_{s+1}\dots\ell_p}.
\end{equation}
Now, assume that we are given Levi-Civita connections for all
$(p,q)$ tensors over $(M,g)$ and over $(N,\gamma)$. For a map $\phi:(M,g)\to
(N,\gamma)$, there is a canonical notion of pull-back bundle
$\phi^*TN$ over $M$ with sections $\phi^*V = V\circ\phi$ for
$V\in\Gamma(TN)$. The Levi-Civita connection $\D^{TN}$ on $TN$ also
induces a connection $\D^{\phi^*TN}$ on this pull-back bundle via
\begin{equation*}
\D^{\phi^*TN}_X\phi^*V = \phi^*\big(\D^{TN}_{\phi_*X}V\big), \qquad
X\in\Gamma(TM),\; V\in\Gamma(TN).
\end{equation*}
We obtain connections on all product bundles over $M$ with
factors $TM$, $T^*M$, $\phi^*TN$ and $\phi^*T^*N$ via the product
rule and compatibility with contractions. Take coordinates $x^k$ 
on $M$, $k=1,\ldots,m=\dim M$, and $y^\mu$ on
$N$, $\mu=1,\ldots,n=\dim N$, and write $\partial_k$ for
$\tfrac{\partial}{\partial x^k}$ and $\partial_\mu$ for
$\tfrac{\partial}{\partial y^\mu}$. We get $\D_i\D_jV_\kappa-\D_j\D_iV_\kappa=R_{ij\kappa\lambda}V_\lambda$ with
\begin{align*}
R_{ij\kappa\lambda}(x)&=\big\langle\Rm{\partial_i,
\partial_j}\phi^*\big(\partial_\lambda\big),
\phi^*\big(\partial_\kappa\big)\big\rangle_{\phi^*TN}(x)\\%
&=\Scal{\NRm{}\big(\phi_*\partial_i, \phi_*\partial_j\big)
\partial_\lambda, \partial_\kappa}_{TN}(\phi(x))\\%
&= \hN\!R_{\mu\nu\kappa\lambda}(\phi(x))
\D_i\phi^\mu(x)\D_j\phi^\nu(x),
\end{align*}
where we used $\phi_*\partial_i =\D_i\phi^\mu\,\partial_\mu$. This 
allows to extend (\ref{app.eq7}) to mixed tensors, for example
\begin{equation}\label{app.eq8}
[\D_i,\D_j]B^{k\kappa}_{\ell\lambda} =
R_{ijp}^kB^{p\kappa}_{\ell\lambda} + R_{ij\ell
p}B^{k\kappa}_{p\lambda} + R_{ij\varrho}^\kappa
B^{k\varrho}_{\ell\lambda} + R_{ij\lambda\varrho}
B^{k\kappa}_{\ell\varrho}.
\end{equation}
The standard example that will be used quite often is the following.
The derivative $\D\phi$ of $\phi:M\to N$ is a section of
$T^*M\otimes\phi^*TN$. Thus, the intrinsic second order derivative
is built with the connection on this bundle, i.e.~$\D_i\D_j\phi^\lambda = \partial_i\partial_j\phi^\lambda -
\Gamma_{ij}^k\partial_k\phi^\lambda + \hN\Gamma_{\mu\nu}^\lambda
\partial_i\phi^\mu\partial_j\phi^\nu$ and similar for higher derivatives. 
Using (\ref{app.eq8}), we obtain
\begin{equation}\label{app.eq10}
\D_i\D_j\D_\ell\phi^\beta-\D_j\D_i\D_\ell\phi^\beta = R_{ij\ell p}\D_p\phi^\beta +
\hN\!R_{\mu\nu\lambda}^\beta\D_\ell\phi^\lambda\D_i\phi^\mu\D_j\phi^\nu.
\end{equation}
There is also a different way to obtain these formulas, which is
especially useful when $\phi$ is evolving and we also want to
include time derivatives. We learned this from \cite{Lamm:diss}.
Here, we interpret $\D^k\phi$ as a $k$-linear $TN$-valued map along
$\phi\in C^\infty(M,N)$ rather than as a section in $(T^*M)^{\otimes
k}\otimes \phi^*TN$. Leting $\omega$ be any such $k$-linear $TN$-valued 
map along $\phi$, i.e.~$\omega(x):(T_xM)^{\times k}\to T_{\phi(x)}N$,
the covariant derivative $\D\omega$ is a $(k+1)$-linear $TN$-valued
map along $\phi$ etc. The curvature tensor $\!\phantom{l}^k\Rm{}$ for 
$\omega$ can then be computed by
\begin{equation}\label{app.eq12}
\begin{aligned}
(\!\phantom{l}^k\!\Rm{X,Y}\omega)(X_1,\ldots,X_k)
&=\NRm{\D\phi(X),\D\phi(Y)}\omega(X_1,\ldots,X_k)\\%
&\quad\,-\sum_{s=1}^k \omega(X_1,\ldots,\Rm{X,Y}X_s,\ldots,X_k).
\end{aligned}
\end{equation}
Of course, this agrees with the definition above, where we used the
bundle interpretation. Now, if $\phi$ is time-dependent, we simply
interpret it as a map $\tilde{\phi}:M\times I\to N$ and interpret
$\D^k\phi$ as $k$-linear $TN$-valued maps on $M\times I$ along
$\tilde{\phi}$. The formalism stays exactly the same.%
\newline

Note that $\dt$ induces a covariant time derivative $\D_t$ (on all
bundles over $M\times I$) that agrees with $\dt$ for time-dependent
functions. Choose coordinates $x^i$ for $M$ with
\begin{equation}\label{app.eq13}
\D_t(\dt) = \D_i(\partial_j) = \D_t(\partial_i) = \D_i(\dt) = 0,
\quad \forall i,j=1,\ldots,m
\end{equation}
at some base point $(p,t)$ in $M\times I$. Then, using
(\ref{app.eq12}), we obtain for $\omega=\D\phi$
\begin{equation}\label{app.eq14}
\begin{aligned}
\D_t(\D_i\D_j\phi) &= \D_t((\D_i\omega)(\partial_j)) =
(\D_t\D_i\omega)(\partial_j) = (\D_i\D_t\omega +
\!\!\phantom{l}^1\!\Rm{\dt,\partial_i}\omega)(\partial_j)\\%
&=\D_i((\D_t\omega)(\partial_j))+\NRm{\dt\phi,\D_i\phi}\omega(\partial_j)
- \omega(\!\phantom{l}^{M\times I}\!\Rm{\dt,\partial_i}\partial_j)\\%
&=\D_i\D_j\dt\phi + \NRm{\dt\phi,\D_i\phi}\D_j\phi.
\end{aligned}
\end{equation}
\begin{rem}
If we also vary the metric $g$ on $M$ in time, we will get an
additional term from the evolution of $\D_i\D_j$, namely
$-(\dt\Gamma_{ij}^k)\D_k\phi$. Note that $\dt\Gamma$ is a tensor,
while $\Gamma$ itself is not.
\end{rem}
\end{appendix}

\makeatletter
\def\@listi{%
  \itemsep=0pt
  \parsep=1pt
  \topsep=1pt}
\makeatother
{\fontsize{10}{11}\selectfont

\vspace{10mm}

Reto M\"uller\\
{\sc Scuola Normale Superiore di Pisa, 56126 Pisa, Italy}
\end{document}